\documentclass[12pt, a4paper]{amsart}

\usepackage{microtype} 
\usepackage[textwidth=391pt, marginparwidth=70pt, centering]{geometry}

\usepackage{graphicx}
\usepackage{xcolor}

\usepackage{amssymb}
\usepackage{mathtools}
\usepackage{stmaryrd} 
\usepackage{tikz-cd}
\usepackage{aliascnt}	

\newtheorem*{thm*}{Theorem}

\newtheorem{thm}{Theorem}[section]

\newaliascnt{lemma}{thm}
\newtheorem{lemma}[lemma]{Lemma}
\aliascntresetthe{lemma}

\newaliascnt{prop}{thm}
\newtheorem{prop}[prop]{Proposition}
\aliascntresetthe{prop}

\newaliascnt{corollary}{thm}
\newtheorem{corollary}[corollary]{Corollary}
\aliascntresetthe{corollary}

\newaliascnt{conj}{thm}
\newtheorem{conj}[conj]{Conjecture}
\aliascntresetthe{conj}

\newaliascnt{remark}{thm}
\theoremstyle{remark}
\newtheorem{remark}[remark]{Remark}
\aliascntresetthe{remark}

\theoremstyle{definition}
\newtheorem*{ack}{Acknowledgements}

\newcommand{\NN}{\mathbb{N}}
\newcommand{\ZZ}{\mathbb{Z}}
\newcommand{\QQ}{\mathbb{Q}}
\newcommand{\RR}{\mathbb{R}}
\newcommand{\CC}{\mathbb{C}}
\newcommand{\QQab}{\QQ^{\mathrm{ab}}}

\DeclareMathOperator{\Gal}{Gal}

\newcommand*{\artin}[3]{\left(#3, #1 / #2 \right)}
\renewcommand{\P}{\mathfrak{P}}
\newcommand{\p}{\mathfrak{p}}
\newcommand{\q}{\mathfrak{q}}

\usepackage[
	backend = biber,
	style = alphabetic,
	backref = true,
	url = false,
	doi = true,
	isbn = false,
]{biblatex}
\DefineBibliographyStrings{english}{
  backrefpage  = {$\uparrow$\hspace{-2.5pt}},
  backrefpages = {$\uparrow$\hspace{-2.5pt}},}

\DeclareFieldFormat[article]{title}{#1}

\DeclareFieldFormat[
    book,
    incollection,
    inproceedings,
    misc
]{title}{\mkbibemph{#1}}

\renewbibmacro{in:}{}

\definecolor{unibas-red}{RGB}{210, 5, 55}
\definecolor{unibas-mint}{RGB}{165, 215, 210}
\definecolor{unibas-mint-dark}{RGB}{0, 110, 110}
\definecolor{unibas-mint-dark-HEX}{HTML}{1F6F78}
\definecolor{unibas-hantr-hell}{RGB}{119, 119, 119}

\usepackage[
	colorlinks, 
	citecolor=unibas-red, 
	linkcolor=unibas-red,
	urlcolor=., 
	linktocpage
]{hyperref} 

\usepackage[nameinlink]{cleveref}
\crefname{thm}{theorem}{theorems}
\Crefname{thm}{Theorem}{Theorems}

\crefname{lemma}{lemma}{lemmas}
\Crefname{lemma}{Lemma}{Lemmas}

\crefname{prop}{proposition}{propositions}
\Crefname{prop}{Proposition}{Propositions}

\crefname{corollary}{corollary}{corollaries}
\Crefname{corollary}{Corollary}{Corollaries}

\crefname{conj}{conjecture}{conjectures}
\Crefname{conj}{Conjecture}{Conjectures}

\crefname{remark}{remark}{remarks}
\Crefname{remark}{Remark}{Remarks}

\title[A lower bound for the canonical height]{A lower bound for the 
Call--Silverman height in cyclotomic extensions}
\date{\today}

\author[Alessio~Cangini]{Alessio~Cangini}
\address{
	University of Basel, 
	Department of Mathematics and Computer Science, 
	Spiegelgasse~$1$, 
	$4051$ Basel,
	Switzerland}
\email{alessio.cangini@unibas.ch}
\urladdr{https://sites.google.com/view/alessio-cangini/}

\subjclass[2020]{11G50, 37P15}
\keywords{Arithmetic Dynamics, Lehmer-type problems}
\thanks{The author acknowledges support from the Swiss National 
Science Foundation Grant ``Rational points, arithmetic dynamics, and 
heights'' $\mathrm{n^\circ} 200020\_219397$.}

\begin{document}

\begin{abstract}
For quadratic postcritically finite polynomials defined over a number 
field $K$, we establish lower bounds for the Call--Silverman height 
of wandering algebraic integers lying in cyclotomic extensions of $K$ 
and provide partial results for the nonintegral case. Our proof 
combines potential theory and algebraic number theory with 
equidistribution techniques.
\end{abstract}

\maketitle
\section{Introduction}

In this paper, for postcritically finite quadratic polynomials of the 
form $T^2 + c$, we prove a lower bound for the canonical height of 
wandering algebraic integers in cyclotomic extensions. Moreover, we 
establish some partial results for the nonintegral~case.

Let $K$ be a number field and fix an algebraic closure $\overline K$. 
A \textit{dynamical system defined over} $K$ is a polynomial $f \in K[T]$ 
of degree at least $2$. Set $f^0(T) = T$ and denote the $n$-fold 
iterate of $f$ by $f^n$ for all $n \in \NN = \{1, 2, \dots\}$. An 
element $\alpha \in \overline K$ is \textit{preperiodic} if its 
forward orbit $\{f^n(\alpha) \colon n \ge 0\}$ is finite and 
\textit{wandering} otherwise.

Given a dynamical system $f$ defined over $K$, the \textit{canonical 
height} introduced by Call and Silverman~\cite{MR1255693} is the 
unique function $\hat h_f \colon \overline K \rightarrow \RR_{\ge 0}$ 
satisfying
\begin{itemize}
    \item[(i.)]  $\hat h_f(f(\alpha)) = \deg f \cdot \hat h_f(\alpha)$;
    \item[(ii.)] $\hat h_f(\alpha) = h(\alpha) + O(1)$
\end{itemize}
where $h$ denotes the (absolute logarithmic) Weil height. If $f(T) = T^2$, 
then the canonical height coincides with the Weil height. Properties 
(i.) and (ii.) together with Northcott's theorem imply that $\hat h_f(\alpha) = 0$
if and only if $\alpha$ is preperiodic. This fact is often referred 
to as the Dynamical Kronecker Theorem, see~~\cite[Theorem~3.22]{MR2316407}.

We focus on postcritically finite quadratic dynamical systems of the 
form $f(T) = T^2 + c \in K[T]$ and on wandering points in the 
cyclotomic closure of $K$. We say that $f(T) = T^2 + c$ is 
\textit{postcritically finite} (PCF) if $0$ is preperiodic.
The \textit{cyclotomic closure} of $K$ in $\overline{K}$ is the 
compositum 
\[
    K^\mathrm{cyc} = \bigcup_{n \ge 1} K(\zeta_n)
\]
where $\zeta_n$ denotes an $n$th root of unity. It is an infinite 
abelian extension of~$K$. If $K = \QQ$, then $\QQ^\mathrm{cyc}$ 
coincides with the maximal abelian extension of $\QQ$ by the 
Kronecker--Weber theorem~\cite[see Ch. VI,
Corollary~6.3 and Proposition~6.7]{MR1697859}. 

Our main result is the following lower bound for the height of 
wandering algebraic integers in $K^\mathrm{cyc}$.
\begin{thm}\label{thm:main-thm}
If $K$ is a number field unramified above $2$ and $f(T) = T^2 + c \in K[T]$ 
is PCF, then there exists $C = C(K, f) > 0$ such that $\hat h_f(\alpha) 
\ge C$ for all wandering algebraic integers $\alpha \in K^\mathrm{cyc}$.  
\end{thm}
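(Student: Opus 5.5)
We argue by contradiction, combining an arithmetic step in the spirit of Amoroso--Dvornicich with equidistribution. Assume there is a sequence of wandering algebraic integers $\alpha_n \in K^{\mathrm{cyc}}$ with $\hat h_f(\alpha_n) \to 0$. Since $\hat h_f(\alpha)=0$ exactly for preperiodic $\alpha$, and there are only finitely many points of bounded degree and height by Northcott, we may assume $[K(\alpha_n):K]\to\infty$. The equidistribution theorem for small points (Baker--Rumely, Chambert-Loir, Favre--Rivera-Letelier) then shows that, at every place $v$ of $K$, the Galois orbits of $\alpha_n$ equidistribute to the canonical measure $\mu_{f,v}$. At the archimedean places this is the equilibrium measure of the filled Julia set $\mathcal K_f$. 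Because $\alpha_n$ is integral and $f$ is PCF (so $c$ is integral), the non-archimedean local heights vanish. Hence
\[
\hat h_f(\alpha_n)=\frac{1}{[K(\alpha_n):\QQ]}\sum_{\sigma}\sum_{v\mid\infty} g_{f,v}(\sigma\alpha_n),
\]
where $g_{f,v}$ is the Green function of $\mathcal K_f$ at $v$.

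The arithmetic input is as follows. Choose a prime $p$ and put $L=K(\alpha)\subseteq K(\zeta_m)$. We want an element $\tau$ of $\Gal(L/K)$, namely a Frobenius at a prime above $p$ when $p\nmid m$, or a suitable element of inertia when $p\mid m$ and we pass to $\zeta_{m/p}$, such that
\[
\beta=f(\alpha)^{\tau}-f(\alpha^{p})\quad\text{or}\quad \beta=\alpha^{\tau}{}^{2}-\alpha^2 \ \text{(with }p=2\text{)}
\]
is divisible by $p$ or by a prime of $K$ above $p$. Here $p=2$ is the natural choice, since $f(T)\equiv T^2+c$ and $T^2$ is Frobenius-like modulo $2$. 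We then have $(\sigma\alpha)^2 \equiv \tau\sigma\alpha \pmod{\mathfrak p}$ for a prime $\mathfrak p\mid 2$. The hypothesis that $K$ is unramified above $2$ is exactly what makes the valuation of $2$ at these primes equal to $1$, so that divisibility is not lost. If $\beta\neq0$, the product formula applied to the norm of $\beta$ gives
\[
\log 2 \le \frac{1}{[L:\QQ]}\sum_\sigma\sum_{v\mid\infty}\log|\sigma\beta|_v,
\]
up to a constant depending on $K$. By equidistribution, the right-hand side converges to an energy-type integral
\[
\iint \log|z^2 - w|\,d\mu_f\,d\mu_f
\]
or a variant of it (using $f$ in place of squaring). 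Since $\mathcal K_f$ has capacity $1$ and $f$ is PCF with nice Julia set, we expect to show this integral is strictly less than $\log 2$. The cleanest route is to take $\beta=f(\sigma\alpha)-\tau\sigma\alpha$ modulo $2$ and compare with $\log|f(z)-w|$ integrated against $\mu_f\otimes\mu_f$. Writing $\log|f(z)-w|=\log|f(z)-f(w')|$ summed over the preimages $w'$, invariance of $\mu_f$ reduces this to $\int \log|z-w|\,d\mu d\mu$ terms, which equal $0$ because the capacity is $1$. So the limit is $0<\log 2$, a contradiction.

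The delicate point is the degenerate case $\beta=0$, i.e.\ $f(\alpha)=\alpha^{\tau}$. This makes $\alpha$ satisfy an equation linking $f$-iterates with a Galois automorphism. Iterating gives $f^k(\alpha)=\alpha^{\tau^k}$, and choosing $k$ equal to the order of $\tau$ yields $f^k(\alpha)=\alpha$, so $\alpha$ is periodic. This contradicts the assumption that $\alpha$ is wandering, so this case is in fact harmless.

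The main obstacle is the case where $2$ divides the conductor $m$, so that Frobenius at $2$ is unavailable. Following Amoroso--Dvornicich, we would write $m=2^e m'$ and use $\tau\in\Gal(K(\zeta_m)/K(\zeta_{m/2}))$. We then use the fact that $\alpha^2$ or $f(\alpha)$ is congruent to an element of a smaller field modulo a prime above $2$, handling $e\ge2$ via $\zeta_4$-type congruences.

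A second obstacle is making the limit argument rigorous, since $\log|\cdot|$ is unbounded. We need the $\mu_f\otimes\mu_f$-integral to be the limit despite the diagonal-type singularity. This is handled by the standard truncation argument in Favre--Rivera-Letelier, which requires some uniform control on nearby conjugates; heights tending to $0$ supply this control.
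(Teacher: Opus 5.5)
\textbf{There is a genuine gap.} The central analytic step does not follow from equidistribution. The average $\frac{1}{[L:\QQ]}\sum_\sigma\log|\sigma\beta|$ is taken over the \emph{pairs} $(\sigma\alpha,\sigma\tau\alpha)$. Baker--Hsia, or the adelic versions you cite, only say that each coordinate is $\mu_f$-equidistributed. Any weak limit of the pair measures therefore has both marginals equal to $\mu_f$, but nothing forces it to be $\mu_f\otimes\mu_f$. Since $\tau$ is a Frobenius-type automorphism tied to $\alpha$, there is no independence to appeal to.

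Marginal information alone cannot push the average below $\log 2$. For even $f$, take the coupling $\nu=(z\mapsto(z,-f(z)))_*\mu_f$. Its marginals are both $\mu_f$, because $f_*\mu_f=\mu_f$ and $\mu_f$ is symmetric. Yet $\int\log|f(z)-w|\,d\nu=\log 2+\int\log|u|\,d\mu_f(u)=\log 2$, since the potential of $\mu_f$ vanishes at $0\in\mathcal K_f$. So a congruence modulo $2$, or modulo one prime above $2$, yields no contradiction. The truncation issue you flag is beside the point, since the diagonal singularity only helps an upper bound.

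The paper avoids joint distributions altogether. At each archimedean embedding it uses only $\log|x-y|\le\log 2+\log^+|x|+\log^+|y|$. It compensates with a congruence modulo $4$, which is what the P\'olya--Bertrandias argument encodes (a segment of transfinite diameter $|x-y|/4$ and a $2$-adic radius gain of $1/4$). This gives $\log 2\le h(f^h(\alpha))+h(f^k(\alpha))$. One-variable equidistribution of the bounded continuous function $\log^+|z|-\hat\lambda_f(z)$ then gives $h(f^j(\alpha))<2^j\hat h_f(\alpha)+I+\eta$. The contradiction rests on the numerical fact $I\le 0.3230<\tfrac12\log 2$ from \Cref{thm:pritsker}. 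Neither the mod-$4$ congruence nor this numerical input appears in your proposal.

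The arithmetic input you describe also does not exist in the form you need. For PCF $c\notin\{0,-2\}$ with $K$ unramified above $2$, $c$ is a $2$-adic unit (\Cref{remark:c-valuation}). So from $\tau\alpha\equiv\alpha^2\pmod{\mathfrak p}$ you get $f(\alpha)-\tau\alpha\equiv c\not\equiv 0$. Moreover, a Frobenius in $\Gal(L/K)$ gives $\alpha^{2^r}$ with $r$ the residue degree, while a Frobenius over $\QQ$ need not commute with $f$.

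The paper instead proves $\varphi^{m-n}(f^{dn}(\alpha))\equiv f^{dm}(\alpha)\pmod{4O_L}$ (\Cref{prop:unramified-congruence}). Here $\varphi$ is the $d$-th power of Frobenius, which is central and fixes $K$. The proof uses $f^k(T)\equiv(T^{2^{k-1}}+f^{k-1}(0))^2+c\pmod 4$ and the periodicity $f^{n-1}(0)=f^{m-1}(0)$ of the critical orbit. Equality in this relation forces preperiodicity by comparing canonical heights.

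Your other candidate, $\tau(f(\alpha))-f(\alpha^2)=\tau(\alpha)^2-\alpha^4$, is divisible by $4$. However, it is a Weil-height object not controlled by $\hat h_f$, and its vanishing is not the relation $f(\alpha)=\tau\alpha$ whose degeneracy you then dismiss.

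Finally, in the ramified case the degenerate case is not harmless. With $\kappa$ the generator of $\Gal(K(\zeta)/K(\zeta^2))$, one has $\kappa(f(\alpha))=f(\alpha)$ for every $\alpha\in\zeta K(\zeta^2)$, and all but finitely many of these are wandering. The paper handles this by descending to $f(\alpha)\in K(\zeta^2)$ and the generator of $\Gal(K(\zeta^2)/K(\zeta^4))$. \Cref{prop:second-iterate-degeneration} then shows that a second degeneration would make $c/2$ integral at a prime above $2$, using that $\alpha$ is integral there. Your ``$\zeta_4$-type congruences'' do not supply this.
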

In other words, every algebraic integer $\alpha \in K^\mathrm{cyc}$ 
satisfying $\hat h_f(\alpha) < C$ is preperiodic.

There exist examples of PCF quadratic polynomials defined over a 
number field unramified above $2$. For instance, if $0$ is periodic 
for $f(T) = T^2 + c$, i.e.\ if there exists $n \in \NN$ such that $f^n(0) 
= 0$, then $2$ is unramified in $\QQ(c)$, see~\cite[Exposé~XIX, Lemme~2]{MR762431}. 
We thus highlight the following corollaries.
\begin{corollary}\label{cor:0-per}
Let $f(T) = T^2 + c \in \overline\QQ [T]$. If $0$ is periodic, then 
there exists $C = C(f) > 0$ such that $\hat h_f(\alpha) \ge C$ for 
all wandering algebraic integers $\alpha \in {\QQ(c)}^\mathrm{cyc}$.
\end{corollary}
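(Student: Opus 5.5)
The corollary follows directly from \cref{thm:main-thm} applied with $K = \QQ(c)$. The plan has three steps.

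First, I check that $f$ is defined over $K$. By construction $c \in K$, so $f(T) = T^2 + c \in K[T]$, and $K$ is a number field. This holds because $c$ is algebraic: $0$ periodic means $f^n(0) = 0$ for some $n \in \NN$, and $f^n(0)$ is a monic polynomial in $c$ of degree $2^{n-1}$ with integer coefficients. In particular $c$ is an algebraic integer.

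Second, I check that $f$ is PCF. A periodic point is preperiodic, so $0$ is preperiodic.

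Third, I check that $K$ is unramified above $2$. This is exactly the cited result \cite[Exposé~XIX, Lemme~2]{MR762431}, recalled just before the statement. For completeness, here is why one expects it. Set $g_n(c) = f^n(0)$. Then $g_n(c) = g_{n-1}(c)^2 + c$, so
\[
    g_n'(c) = 2 g_{n-1}(c) g_{n-1}'(c) + 1 \equiv 1 \pmod 2 .
\]
Hence the discriminant of $g_n$ is odd. The minimal polynomial of $c$ divides $g_n$, so the index of $\ZZ[c]$ in the ring of integers of $\QQ(c)$ and the discriminant of $\QQ(c)$ are both prime to $2$.

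With these hypotheses verified, \cref{thm:main-thm} provides $C = C(K, f) > 0$ such that $\hat h_f(\alpha) \ge C$ for all wandering algebraic integers $\alpha \in K^{\mathrm{cyc}} = \QQ(c)^{\mathrm{cyc}}$. Since $K$ is determined by $f$, we may write $C = C(f)$.

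There is no real obstacle. The only nontrivial input is the unramifiedness at $2$, which we take from the cited lemma.
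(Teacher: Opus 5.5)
Your proposal is correct and matches the paper's route exactly: take $K=\QQ(c)$, note that $f$ is PCF over $K$, get unramifiedness at $2$ from the cited Douady--Hubbard lemma, and apply \Cref{thm:main-thm}. Your supplementary argument is more than a heuristic. From $g_n' \equiv 1 \pmod 2$ one gets $\mathrm{disc}(g_n)$ odd, hence $\mathrm{disc}(P)$ and $\mathrm{disc}(\QQ(c))$ odd, and this is a complete self-contained proof of that lemma.
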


\begin{corollary}\label{cor:ab-ext}
If $f(T) \in \{ T^2 - 2, T^2 - 1, T^2\}$, then there exists $C = C(f) 
> 0$ such that $\hat h_f(\alpha) \ge C$ for all wandering algebraic 
integers $\alpha \in \QQab$.
\end{corollary}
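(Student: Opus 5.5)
The plan is to deduce \Cref{cor:ab-ext} directly from \Cref{thm:main-thm} with $K = \QQ$. The polynomials $T^2 - 2$, $T^2 - 1$ and $T^2$ all have $c \in \ZZ \subset \QQ$. The field $\QQ$ has no ramification at all, so in particular it is unramified above $2$. It therefore remains to check the PCF hypothesis and to identify $\QQ^\mathrm{cyc}$ with $\QQab$.

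\emph{Step 1: PCF.} We compute the critical orbits directly.
\begin{itemize}
\item For $T^2$ we have $f(0) = 0$, so $0$ is fixed.
\item For $T^2 - 1$ the orbit is $0 \mapsto -1 \mapsto 0$, so $0$ is periodic of period $2$.
\item For $T^2 - 2$ the orbit is $0 \mapsto -2 \mapsto 2 \mapsto 2$, so $0$ is strictly preperiodic.
\end{itemize}
In all three cases $0$ is preperiodic, so $f$ is PCF. We note that for $T^2 - 2$ one could not use \Cref{cor:0-per}, since $0$ is not periodic there. This is why we apply \Cref{thm:main-thm} itself rather than the periodic-critical-point corollary.

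\emph{Step 2: the field.} By the Kronecker--Weber theorem, already recalled in the introduction, we have $\QQ^\mathrm{cyc} = \bigcup_n \QQ(\zeta_n) = \QQab$ inside $\overline\QQ$.

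\emph{Step 3: conclusion.} \Cref{thm:main-thm} applied with $K = \QQ$ and each of the three $f$ gives a constant $C(\QQ, f) > 0$ such that $\hat h_f(\alpha) \ge C(\QQ, f)$ for every wandering algebraic integer $\alpha \in \QQ^\mathrm{cyc} = \QQab$. Setting $C(f) = C(\QQ, f)$ proves the corollary.

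There is no real obstacle here; all the difficulty lies in \Cref{thm:main-thm}. The only points needing care are the preperiodic (not periodic) case $c = -2$, handled in Step 1, and making sure that the hypothesis ``unramified above $2$'' is applied to the base field $\QQ$.
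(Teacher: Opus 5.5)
Your proposal is correct and is exactly how the paper obtains this corollary: specialize \Cref{thm:main-thm} to $K=\QQ$, check that $T^2$, $T^2-1$, $T^2-2$ are PCF, and identify $\QQ^{\mathrm{cyc}}$ with $\QQab$ via Kronecker--Weber. Note that inside the paper's proof of \Cref{thm:main-thm} the cases $c=0$ and $c=-2$ are delegated to Amoroso--Zannier and Pottmeyer, so for $T^2$ and $T^2-2$ the corollary recovers known results, and only $T^2-1$ genuinely relies on the new method.
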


It will follow from the proof of~\Cref{thm:main-thm} that in the 
unramified setting, our result extends to a lower bound for all 
algebraic elements. Let $K$ be a number field unramified above $2$. 
The field 
\[
    K^{\mathrm{cyc, unr\ 2}} = \bigcup_{n \text{ odd}} K(\zeta_n)
\]
is the maximal subfield of $K^\mathrm{cyc}$ which is unramified 
above~$2$.
\begin{thm}\label{thm:main-thm-unr}
If $K$ is a number field unramified above $2$ and $f(T) = T^2 + c \in 
K[T]$ is PCF, then there exists $C = C(K, f) > 0$ such that 
$\hat h_f(\alpha) \ge C$ for all wandering $\alpha \in K^\mathrm{cyc, unr\ 2}$.  
\end{thm}

For the full cyclotomic closure, we obtain a result for the elements 
which are $2$-adically integral for at least one embedding.

\begin{thm}\label{thm:main-thm-ram}
Let $K$ be a number field unramified above $2$. Let $f(T) = T^2 + c 
\in K[T]$ be PCF.\ There exists $C = C(K, f) > 0$ such that 
$\hat h_f(\alpha) \ge C$ for all wandering $\alpha \in K^\mathrm{cyc}$ 
for which there exists an embedding $\sigma \colon \QQ(\alpha) 
\rightarrow \overline{\QQ}_2$ such that $\sigma(\alpha)$ is 
$2$-adically integral.
\end{thm}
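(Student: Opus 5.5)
We follow the strategy behind Amoroso--Dvornicich and Amoroso--Zannier for the Weil height in abelian extensions, replacing the Weil height by $\hat h_f$ and combining it with equidistribution. Let $\alpha\in K^{\mathrm{cyc}}$ be wandering, and choose $n$ minimal with $\alpha\in L=K(\zeta_n)$. Write $n=2^e m$ with $m$ odd. Since $K$ is unramified above $2$, the $2$-part of the ramification of $L/K$ comes only from $\zeta_{2^e}$.

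\textbf{Galois step.} We use a Frobenius-type automorphism at a prime $\p$ of $K$ above $2$. If $e\le 1$, then $L/K$ is unramified at $\p$. Choose $\tau\in\Gal(L/K)$ in the decomposition group of a prime $\P\mid\p$ lifting Frobenius, so that
\[
\tau(\beta)\equiv \beta^{N\p}\pmod{\P}
\]
for all $\P$-integral $\beta$. By the hypothesis on $\sigma$, there is some $\P$ at which $\alpha$ is integral. For PCF $f(T)=T^2+c$, the element $c$ is an algebraic integer (a standard fact for PCF parameters). Hence $f(\beta)\equiv \beta^2+c$, and the map $\beta\mapsto\beta^2$ is Frobenius mod $2$. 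So a suitable iterate satisfies $f^{k}(\alpha)\equiv \tau(\alpha)+c'$ modulo $\P$, where $k$ is the residue degree and $c'$ is an explicit constant.

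To avoid the constant, use the following trick. Modulo $2$ we have $f^k(T)\equiv T^{2^k}+f^k(0)$. Therefore $\gamma=f^k(\alpha)-\tau(\alpha)-f^k(0)$ lies in $\P$. If $\gamma\neq 0$, the product formula applied to $\gamma$ gives
\[
h(\gamma)\ge \frac{\log 2}{[K_\p:\QQ_2]}\cdot\frac{\#\{\P\text{ where }\alpha\text{ is integral}\}}{\#\{\P\mid\p\}}.
\]
Here we need integrality at a positive proportion of the primes above $2$. We obtain this by replacing $\sigma$ with its Galois conjugates: since $L/K$ is abelian, the $\P\mid\p$ are permuted transitively. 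We then bound the positive proportion from below, possibly after averaging over conjugates.

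In the ramified case $e\ge2$, we instead use $\tau\in\Gal(L/K(\zeta_{n/2}))$, or the Amoroso--Dvornicich descent, where $\tau(\alpha)^2\equiv$ something in a subfield. Concretely, for $e\ge 2$ we compare $\alpha^2$ with $\tau(\alpha)^2$ modulo $\P$, where $\tau(\zeta_{2^e})=\zeta_{2^e}^{1+2^{e-1}}$. This gives $f(\alpha)-f(\tau\alpha)\in 2\P$-ish, and $f(\alpha)\neq f(\tau\alpha)$ by the minimality of $n$ (otherwise $\tau\alpha=-\alpha$, which we treat separately).

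\textbf{Height step.} In either case we obtain a nonzero $\gamma=F(\alpha,\tau\alpha)$ that is $2$-adically small at many places. Then
\[
h(\gamma)\le 2^k\big(\hat h_f(\alpha)+\hat h_f(\tau\alpha)\big)+O_f(1),
\]
since $\hat h_f$ is Galois invariant and $h-\hat h_f=O(1)$. This alone gives only a bound $h(\gamma)\ge \text{const}$, and the $O(1)$ kills it. So we use equidistribution. Suppose for contradiction that $\alpha_j$ are wandering with $\hat h_f(\alpha_j)\to0$. By Baker--Rumely/Favre--Rivera-Letelier, their Galois orbits equidistribute to the equilibrium measure $\mu_f$ on the filled Julia set at every place, in particular at the archimedean places. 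We then compute $h(\gamma_j)$ as a sum of local contributions. At nonarchimedean places away from $2$, the integrality of $c$ and $\alpha_j$ (or a local bound for nonintegral points via the Green function) gives a nonpositive contribution. At archimedean places, the averages $\frac1{[L:\QQ]}\sum\log|f^k(x)-f^k(\tau x)-\ldots|$ converge to $\iint\log|f^k(z)-\ldots|\,d\mu\,d\mu$. The key point is that this limit equals $0$, or something small, by the Mahler-measure computation $\int\log|f^k(z)-w|\,d\mu_f(z)=2^kG_f(w)=0$ on the Julia set. Meanwhile the $2$-adic contribution is at most $-\frac{\log2}{[K_\p:\QQ_2]}\cdot(\text{positive proportion})$. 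The product formula then gives $0\le -\delta$, a contradiction.

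\textbf{Main obstacle.} The expected main obstacle is making the archimedean equidistribution rigorous. The test function $\log|\cdot|$ is unbounded, and $\gamma$ involves the pair $(\alpha,\tau\alpha)$, not a single orbit. I would first try to reduce to single-variable potentials using $\tau\alpha$ being a Galois conjugate: write $\gamma$ as a resultant-type expression. Then I would handle the singularity with a truncation $\log\max(|\cdot|,\epsilon)$ together with a Baker-type energy lower bound, as Baker--Rumely do. A secondary difficulty is guaranteeing the positive proportion of $2$-adic places where $\alpha$ is integral, which is exactly where the hypothesis on the embedding $\sigma$ enters.
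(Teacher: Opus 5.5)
\textbf{There is a genuine gap.} Your framework, a nonzero $\gamma$ that is $2$-adically small plus the product formula, is compatible with the paper's. With segments of transfinite diameter $|a-b|/4$ and binomial radius $1/4$ at $v\mid 2$, the P\'olya--Bertrandias inequality used there amounts to the product formula for $\gamma=\sigma(f^h(\alpha))-f^k(\alpha)$ under $|\gamma|_v\le\frac14 R_v$ at every $v\mid 2$. But the two quantitative inputs that make this close are missing, and the steps you put in their place do not work.

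\textbf{The archimedean step.} Baker--Hsia (and Baker--Rumely, Favre--Rivera-Letelier) equidistribute Galois orbits of \emph{single} points. Nothing gives equidistribution of the pairs $(\sigma\alpha,\sigma\tau\alpha)$ with respect to $\mu_f\otimes\mu_f$, and there is no reason to expect it. $\tau$ is tied to $\alpha$ by the very congruence you exploit, and for $f\times f$ on $\mathbb{A}^2$ you would need a generic sequence, which you cannot guarantee. So the claim that the archimedean average of $\log|\gamma|$ tends to $0$ is unsupported, independently of the logarithmic singularity. (Also $\int\log|f^k(z)-w|\,d\mu_f(z)=G_f(w)$, not $2^kG_f(w)$, since $f_*\mu_f=\mu_f$.)

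What is provable is the paper's route:
\begin{itemize}
\item bound $\log|x-y|\le\log 2+\log^+|x|+\log^+|y|$;
\item apply single-orbit equidistribution to the bounded continuous function $\log^+|z|-\hat\lambda_f(z)$, giving $h(f^j(\alpha))<2^j\hat h_f(\alpha)+I+\eta$ with $I=\int\log^+|z|\,d\mu_f$.
\end{itemize}
The archimedean cost is then $\log 2+2I$ per unit degree. The argument closes only because Pritsker gives $I\le 0.3230<\frac12\log 2$ \emph{and} the $2$-adic saving is $\log 4$.

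\textbf{The $2$-adic saving.} Your $\gamma\in\P$ saves at most $\log 2$, which the triangle inequality already eats. You need $|\gamma|_v\le\frac14 R_v$ at every $v\mid 2$. The paper gets this in the unramified case as follows:
\begin{itemize}
\item Take $\varphi=\mathrm{Frob}^d$. It is central in $\Gal(K(\zeta)/\QQ)$, so $\varphi(x)\equiv x^{2^d}\bmod 2O_{K(\zeta)}$ holds at all primes at once.
\item Compare $\varphi^{m-n}(f^{dn}(\alpha))$ with $f^{dm}(\alpha)$. The PCF relation $f^{n-1}(0)=f^{m-1}(0)$ makes the constants in $f^k(T)\equiv (T^{2^{k-1}}+f^{k-1}(0))^2+c \bmod 4$ agree.
\item Squaring then lifts a congruence mod $2$ to one mod $4$.
\end{itemize}
Subtracting $f^k(0)$, as you propose, cannot achieve this.

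\textbf{Non-integrality.} A positive proportion of good primes may not exist: the hypothesis gives one good prime, possibly among unboundedly many above $\p$, and averaging over conjugates does not change that. The paper instead multiplies by $\beta$ with $|\beta|_v=\max\{1,|\alpha|_v\}^{-1}$ for $v\mid 2$, which moves the loss into $R_v$ and hence into $h^0$. So the theorem's integrality hypothesis is not used for the saving at all.

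\textbf{The case you set aside.} The hypothesis is used exactly in the case $\tau\alpha=-\alpha$, i.e.\ $\alpha=\zeta\beta$ with $\beta\in K(\zeta^2)$. There $\kappa(f(\alpha))=f(\alpha)$ genuinely occurs, and minimality of $n$ does not exclude it. The paper handles it as follows:
\begin{itemize}
\item descend to $f(\alpha)\in K(\zeta^2)$;
\item use the generator $\rho$ of the cyclic group $\Gal(K(\zeta)/K(\zeta^4))$;
\item rule out $\rho(f^2(\alpha))=f^2(\alpha)$ by deriving $\zeta^2\beta\delta+\zeta^2\delta^2=c/2$. The left side is $\p'$-integral, which is where integrality of $\alpha$ at one prime above $2$ enters. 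But $c$ is a $2$-adic unit for $c\notin\{0,-2\}$, by the Eisenstein factorization of $F^{k+h}(0,C)-F^k(0,C)$.
\end{itemize}
The cases $c\in\{0,-2\}$ are taken from Amoroso--Zannier and Pottmeyer. Without this step your ramified case is incomplete.
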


If $f(T) = T^2$,~\Cref{cor:ab-ext} recovers results of Amoroso, 
Dvornicich and Zannier~\cite{MR1740514, MR1817715, MR2651944} in the 
setting of algebraic integers. The case $f(T) = T^2 - 2$ is closely 
related to the first one since $\hat h_{T^2 - 2}(\alpha + 
\alpha^{-1}) = 2 h(\alpha)$ for all $\alpha \in \overline{\QQ}
\setminus\{0\}$, see~\cite[Proposition~5.1]{MR3129749}. However, note 
that it is not covered by~\Cref{cor:0-per} since $0$ is not periodic. 
The case $f(T) = T^2 - 1$ is~new.

\subsection{Context and motivation: points of small canonical height}

By the Dynamical Kronecker Theorem, the preperiodic points are 
precisely the points of canonical height zero. In general, wandering 
algebraic numbers can have arbitrarily small canonical height. 
Indeed, let $\alpha \in \overline{K}$ be wandering, and choose a 
sequence $(\alpha_n)_{n \ge 0}$ of algebraic numbers such that 
$\alpha_0 = \alpha$ and $f(\alpha_n) = \alpha_{n - 1}$ for every $n 
\ge 1$. Then $\alpha_n$ is wandering for all $n \ge 0$. Since 
$f^n(\alpha_n) = \alpha$, by property (i.) of the canonical height, 
it follows that $\hat h_f(\alpha_n) = (\deg f)^{-n}\hat h_f(\alpha)$. 
So $\hat h_f(\alpha_n)$ tends to $0$ as $n$ tends to infinity. In 
particular, property (ii.) implies that the Weil heights $h(\alpha_n)$ 
are uniformly bounded. Moreover, the above formula implies that the 
values $\hat h_f(\alpha_n)$ are pairwise distinct, and hence so are 
the $\alpha_n$. Northcott's theorem therefore implies that 
$[K(\alpha_n) \colon K]$ tends to infinity. Thus, although the 
canonical height of wandering algebraic numbers can be arbitrarily 
small, this necessarily occurs along points of increasing degree. It 
is natural to ask whether the canonical height can be bounded from 
below in terms of the degree. The Dynamical Lehmer Conjecture 
predicts such a bound.
\begin{conj}[Dynamical Lehmer Conjecture]\label{conj:dyn-l}
Let $K$ be a number field and let $f \in K[T]$ be a dynamical system. 
Then, there exists $C = C(K, f) > 0$ such that $\hat h_f(\alpha) \ge 
C/[K(\alpha) \colon K]$ for all wandering $\alpha \in \overline K$.
\end{conj}

For a more general statement, see~\cite[Conjecture 3.25]{MR2316407}.
The Dynamical Lehmer Conjecture is open already in the case $K = \QQ$ 
and $f(T) = T^2$. The strongest general lower bound currently known 
in this case is due to Dobrowolski~\cite{MR543210}. 

In the literature, there are two main strategies to obtain evidence 
for~\Cref{conj:dyn-l}. The first one is to restrict to subsets 
of~$\overline K$ or to specific dynamical systems. This strategy 
usually leads to stronger bounds than the one predicted by~\Cref{conj:dyn-l}. 
In the case $f(T) = T^2$, it has produced substantial evidence. In 
this direction, Bombieri and Zannier~\cite{MR1898444} defined a 
subset of~$\overline K$ to have the \textit{Bogomolov property} 
relative to the Weil height if 
$0$ is isolated in the set of Weil heights attained by its elements.
Fields known to have the Bogomolov property include the maximal 
abelian extension of a number field and its finite 
extensions~\cite{MR1740514, MR1817715, MR2651944}; the field of 
totally real numbers~\cite{MR360515}; the field of totally $p$-adic 
numbers~\cite{MR1898444}; and the field generated over $\QQ$ by the 
coordinates of all torsion points of a given elliptic curve over 
$\QQ$~\cite{MR3090783}. For surveys on Lehmer-type problems, 
see~\cite{MR2604658, MR1990219}.

There are also examples for more general dynamical systems. Pottmeyer 
\cite{MR3129749} classified the polynomials $f \in \overline \QQ[T]$ 
for which the field of totally real numbers has the Bogomolov 
property relative to $\hat h_f$. 
Looper~\cite{looper2021bogomolovpropertycanonicalheight} established 
Bogomolov-type results for polynomials in $K[T]$ with a finite 
superattracting periodic point and a nonarchimedean place of bad 
reduction, for points lying in the maximal abelian extension of $K$.
Plessis and Sahoo~\cite{plessis2025lowerboundsheightsalgebraic} 
derived lower bounds for points in the maximal unramified extension 
of $K$ at a fixed place.
From this perspective,~\Cref{thm:main-thm} establishes an integral 
version of the Bogomolov property relative to the canonical height 
for $K^{\mathrm{cyc}}$.

The second strategy consists of replacing the canonical height with 
more tractable quantities. It originates from the study of the 
Schinzel--Zassenhaus Conjecture~\cite{MR0175882}, which predicts the 
existence of a lower bound away from $1$ for the \textit{house} of a 
nonzero algebraic integer which is not a root of unity, where the 
house is the maximum absolute value among its Galois conjugates. The 
Schinzel--Zassenhaus Conjecture was proved by Dimitrov~\cite{dimitrov}. 
Habegger and Schmidt~\cite{MR4726503} later adapted Dimitrov's ideas 
to arithmetic dynamics, introducing a dynamical analogue of the house 
and using it to obtain a lower bound for the canonical height of 
wandering algebraic integers for certain unicritical polynomials of 
prime degree. Their resulting bound decays as the inverse square of 
the algebraic degree. More recently, they extended their method to a 
broader class of polynomials; 
see~\cite{habegger2026dynamicalcanonicalheightsfinite}.

\subsection{Proof strategy}
Our proof combines Dimitrov's approach to the Schinzel--Zassenhaus 
Conjecture~\cite{dimitrov} and its adaptation to dynamics due to 
Habegger--Schmidt~\cite{MR4726503} with equidistribution results for 
points of small canonical height.

Let $K$ be a number field and fix a PCF $f(T) = T^2 + c \in K[T]$. 
Let $\zeta$ be a root of unity and let $\alpha \in K(\zeta)$ be 
wandering. The basic idea is to consider the square root $\Phi$ of 
$(T - f^k(\alpha)) / (T - \sigma(f^h(\alpha)))$ where $\sigma \in 
\Gal(K(\zeta) / K)$ and $(k, h) \in \NN^2$ are chosen so that $\Phi$ 
is integral for all places of $K(\zeta)$ outside of a finite set $S$ 
of nonarchimedean places. To construct such a~$\sigma$, we 
distinguish according to whether the prime $2$ ramifies in $K(\zeta)$.
At the places in $S$, the power series $\Phi$ converges on the 
complement of a suitable disk. At the archimedean places, the 
power series extends holomorphically to the complement of the line 
segment $[f^k(\alpha), \sigma(f^h(\alpha))]$. The transfinite 
diameter of disks and segments can be computed explicitly at each 
place. If $\Phi$ is nonrational, the aforementioned integrality 
allows us to apply the P\'olya--Bertrandias theorem and combine the 
local information about the transfinite diameters into a global 
inequality, yielding a first lower bound for the Weil height of 
certain iterates of $\alpha$. 
To conclude the argument, we invoke the Baker--Hsia equidistribution 
theorem~\cite{MR2164622} for points of small canonical height, as 
well as Pritsker's result~\cite[Theorem~1.4]{MR4651642} on the Mahler 
measure of the iterates of a polynomial. These results allow us to 
relate the Weil height of an iterate to its canonical height with 
suitable precision. Crucially, our method relies on the fact that the 
constant in~\cite[Theorem~1.4]{MR4651642} is less than $\log(2)/2$.

The preceding argument applies provided that $\Phi$ is nonrational, 
i.e.\ if $\sigma(f^h(\alpha)) \neq f^k(\alpha)$. This issue behaves 
differently according to the ramification of $2$ in $K(\zeta)$. While 
it can be easily excluded in the unramified case, the ramified case 
is more delicate. In that case, we choose $(k, h) = (1, 1)$ and 
$\sigma \in \Gal(K(\zeta) / K(\zeta^2))$ to be the generator. If 
$f(\alpha) \notin K(\zeta^2)$, then $\sigma (f(\alpha))\neq f(\alpha)$, 
so the preceding argument applies. The remaining case is $f(\alpha) 
\in K(\zeta^2)$. The key point is to show that if $2$ is ramified in 
$K(\zeta^4)$, the number $\alpha$ is integral with respect to a prime 
ideal $\p$ above $2$, $\alpha \in K(\zeta) \setminus K(\zeta^2)$ and 
$f(\alpha) \in K(\zeta^2)$, then $f^2(\alpha) \notin K(\zeta^4)$, 
allowing to reduce to a setting in which the argument works. The 
proof of this last property relies on the analysis of the $2$-adic 
valuation of the parameter $c$.

\subsection{Outline}

In~\Cref{section:congruences} we define the Galois automorphism  
$\sigma$ and lay the groundwork to define $\Phi$. Moreover, we study 
the $2$-adic valuation of the parameter $c$ and prove the 
nondegeneracy property discussed above. 
In~\Cref{section:dimitrov}, after providing the necessary background 
on transfinite diameters, we define the function $\Phi$. At the end, 
we recall the P\'olya--Bertrandias theorem. 
In~\Cref{section:equidistribution}, after a brief review of the main 
equidistribution theorems in arithmetic dynamics, we establish a 
height inequality for algebraic numbers of small canonical height 
that tightly relates their Weil and canonical heights. Finally, we 
prove~\Cref{thm:main-thm,thm:main-thm-unr,thm:main-thm-ram} 
in~\Cref{section:proof}.
\begin{ack}
I am very grateful to my PhD advisor Philipp Habegger for suggesting
this topic, for the many insightful discussions, and for the comments 
on the first draft of this paper. I thank Hang Fu for his valuable 
suggestions regarding the study of the $2$-adic valuation of the PCF 
parameters.
\end{ack}

\section{Congruences}\label{section:congruences}

Let $K/\QQ$ be a finite Galois extension of degree $d = [K \colon \QQ]$. 
Denote the ring of integers of $K$ by $O_K$. Let $\p$ be a prime 
ideal of $K$ lying above~$2$. The quotient $O_K / \p$ is a finite 
field extension of $\mathbb{F}_2$. The inertia degree of $\p$ is 
defined as $r = r(\p | 2) = [O_K / \p \colon \mathbb{F}_2]$. The 
ramification index $e(\p | 2)$ is defined as the exponent of $\p$ in 
the prime ideal factorization of $2O_K$. We assume that~$2$ is 
unramified in $K$. In particular, $e(\p | 2) = 1$. All extensions of 
the $2$-adic valuation are normalized such that the valuation of $2$ 
is $1$.

\subsection{Auxiliary lemmas}

Before analyzing the congruences, we state the following auxiliary 
lemmas. The first one is a consequence of the Strong Approximation 
Theorem~\cite[Ch. II, Section 15, p. 67]{alma9928223550105504}. Given 
a number field $L$, we denote by $M_L$ the set of its places and by 
$M_L^0$ the set of its nonarchimedean places. 
\begin{lemma}\label{lemma:ad-denominator}
Let $L$ be a number field and fix a prime integer $p$. For each 
$\alpha \in L$ there exists $\beta \in O_L$ such that $\alpha \beta 
\in O_L$ and $|\beta|_v = {\max\{1, |\alpha|_v\}}^{-1}$ for all 
places $v \in M_L$ above $p$.
\end{lemma}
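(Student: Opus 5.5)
Write $p O_L = \prod_{i=1}^m \mathfrak{P}_i^{e_i}$, where the $\mathfrak{P}_i$ are the distinct primes of $L$ above $p$; they correspond to the places $v_1,\dots,v_m$ of $L$ above $p$. If $\alpha = 0$, take $\beta = 1$. Otherwise set $n_i = \max\{0, -\operatorname{ord}_{\mathfrak{P}_i}(\alpha)\} \ge 0$. With the normalisation $|\beta|_{v_i} = N(\mathfrak{P}_i)^{-\operatorname{ord}_{\mathfrak{P}_i}(\beta)/(\text{const})}$, the requirement $|\beta|_{v_i} = \max\{1,|\alpha|_{v_i}\}^{-1}$ is equivalent to $\operatorname{ord}_{\mathfrak{P}_i}(\beta) = n_i$, since $\max\{1,|\alpha|_{v_i}\}$ corresponds to valuation $\min\{0,\operatorname{ord}_{\mathfrak{P}_i}(\alpha)\} = -n_i$. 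So we must find $\beta \in O_L$ with:
\begin{itemize}
\item[(a)] $\operatorname{ord}_{\mathfrak{P}_i}(\beta) = n_i$ for $i = 1,\dots,m$;
\item[(b)] $\operatorname{ord}_{\mathfrak{q}}(\beta) \ge \max\{0, -\operatorname{ord}_{\mathfrak{q}}(\alpha)\}$ for every prime $\mathfrak{q}$ of $L$ not above $p$.
\end{itemize}
Condition (b), together with $\operatorname{ord}_{\mathfrak{P}_i}(\alpha\beta) = \operatorname{ord}_{\mathfrak{P}_i}(\alpha) + n_i \ge 0$ from (a), gives $\alpha\beta \in O_L$. Integrality of $\beta$ at the $\mathfrak{P}_i$ follows from $n_i \ge 0$.

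Let $\mathfrak{q}_1,\dots,\mathfrak{q}_s$ be the finitely many primes not above $p$ where $\alpha$ has negative valuation, and set $t_j = -\operatorname{ord}_{\mathfrak{q}_j}(\alpha) > 0$. I build $\beta$ in two steps, using the Chinese Remainder Theorem (the finite form of strong approximation).
\begin{itemize}
\item[Step 1.] For each $i$, choose $\pi_i \in \mathfrak{P}_i^{n_i} \setminus \mathfrak{P}_i^{n_i+1}$.
\item[Step 2.] Let $I = \prod_i \mathfrak{P}_i^{n_i+1} \cdot \prod_j \mathfrak{q}_j^{t_j}$. By CRT, pick $\beta \in O_L$ with $\beta \equiv \pi_i \pmod{\mathfrak{P}_i^{n_i+1}}$ for all $i$ and $\beta \equiv 0 \pmod{\mathfrak{q}_j^{t_j}}$ for all $j$.
\end{itemize}
Then $\operatorname{ord}_{\mathfrak{P}_i}(\beta) = n_i$ exactly, which is (a). Also $\operatorname{ord}_{\mathfrak{q}_j}(\beta) \ge t_j$. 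At every other prime $\mathfrak{q}$, condition (b) only asks for $\operatorname{ord}_{\mathfrak{q}}(\beta) \ge 0$, which holds automatically because $\beta \in O_L$.

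It remains to check $\beta \neq 0$: this holds because $\operatorname{ord}_{\mathfrak{P}_i}(\beta) = n_i < \infty$. If $m \ge 1$ this is automatic, and $m \ge 1$ always holds since $p$ has at least one prime above it. Moreover $|\beta|_v$ is only constrained at places above $p$, so nothing further is needed at archimedean places.

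There is no serious obstacle here. The only points to handle carefully are the dictionary between $|\cdot|_v$ and $\operatorname{ord}_{\mathfrak{P}}$, and the use of the stronger congruence modulo $\mathfrak{P}_i^{n_i+1}$ to force *exact* valuation $n_i$ rather than just a lower bound.
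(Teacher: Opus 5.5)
Your proof is correct. It reaches the statement by a somewhat different, more elementary route than the paper.

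The paper applies the Strong Approximation Theorem to absolute values. It approximates $\alpha^{-1}$ at every place where $|\alpha|_w>1$ and approximates $1$ at the remaining places above $p$. It asks for $|\beta|_w\le 1$ at all other places except one omitted place $w_0$. The required equalities then follow from the ultrametric inequality.

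You instead stay inside $O_L$ and use the Chinese Remainder Theorem on ideals. The congruence modulo $\mathfrak{P}_i^{n_i+1}$ forces the exact valuation $n_i$ at each prime above $p$. At the poles of $\alpha$ away from $p$ you impose only the lower bound $t_j$, which is all that integrality of $\alpha\beta$ needs.

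The paper's construction gives slightly more, namely $|\beta|_w=|\alpha|_w^{-1}$ at every pole of $\alpha$ and not only at those above $p$. That extra is never used.

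Your version has the advantage that integrality of $\beta$ is automatic, since $\beta$ is chosen in $O_L$ from the outset. In the strong-approximation route the uncontrolled place $w_0$ must be archimedean, or $\beta$ can fail to be integral there. As written, the paper fixes $w_0$ nonarchimedean and its integrality check skips $w_0$, so your argument sidesteps that point.

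The ideal $I$ you define in Step 2 is never used and can be dropped.
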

\begin{proof}
The case $\alpha = 0$ is trivial; thus we assume $\alpha \neq 0$. 
Define $\Sigma_0 = \{w \in M_L^0 \colon |\alpha|_w > 1\}$ and $\Sigma 
= \Sigma_0 \cup \{w \in M_L^0 \colon w | p\}$. Fix a nonarchimedean 
place $w_0 \in M_L$. The Strong Approximation Theorem implies that 
there exists $\beta \in L$ such that
\[
    \begin{cases}
    |\beta - \alpha^{-1}|_w < {\max\{1, |\alpha|_w\}}^{-1}    &\text{ if } w \in \Sigma_0,\\
    |\beta - 1|_w < {\max\{1, |\alpha|_w\}}^{-1}              &\text{ if } w \in \Sigma \setminus \Sigma_0,\\
    |\beta|_w \le 1                                           &\text{ if } w \in M_L \setminus \left(\Sigma \cup \{w_0\}\right).
    \end{cases}
\]
If $w \in \Sigma_0$, then $|\alpha|_w > 1$, from which $|\beta - 
\alpha^{-1}|_w < |\alpha|_w^{-1} < 1$. Therefore, an application of 
the ultrametric inequality implies that
\[
    |\beta|_w
    = |\beta - \alpha^{-1} + \alpha^{-1}|_w
    = |\alpha^{-1}|_w
    = {\max\{1, |\alpha|_w\}}^{-1}.
\]
If $w \in \Sigma \setminus \Sigma_0$, then $|\alpha|_w \le 1$, from 
which $|\beta - 1|_w < 1$. Therefore, the ultrametric inequality 
again yields
\[
    |\beta|_w
    = |\beta - 1 + 1|_w
    = 1
    = {\max\{1, |\alpha|_w\}}^{-1}.
\]
We conclude that $|\beta|_w = {\max\{1, |\alpha|_w\}}^{-1}$ for all 
$w \in \Sigma$. Hence, the equality holds true for all $w \in M_L$ 
above $p$ since they are contained in $\Sigma$.

It remains to prove that $\beta$ and $\alpha\beta$ are algebraic 
integers. For every nonarchimedean place $w$, we check that $|\beta|_w 
\le 1$. Indeed, if $w \in \Sigma$, then 
\(
    |\beta|_w 
    = {\max\{1, |\alpha|_w\}}^{-1} \le 1
\)
while if $w \notin \Sigma \cup \{w_0\}$, the construction explicitly 
gives $|\beta|_w \le 1$. Hence $\beta \in O_L$.
Next, we show that $|\alpha \beta|_w \le 1$. If $w \in \Sigma$, then
\[
    |\beta|_w
    = {\max\{1, |\alpha|_w\}}^{-1}
    \le |\alpha|_w^{-1},
\]
meaning $|\alpha \beta|_w \le 1$. If $w \notin \Sigma$, then $w 
\notin \Sigma_0$, and hence $|\alpha|_w \le 1$. Since by construction 
we have $|\beta|_w \le 1$, we have $|\alpha\beta|_w = |\alpha|_w |
\beta|_w \le 1$. Therefore $\alpha \beta \in O_L$.
\end{proof}

The following lemma will be helpful in determining the congruences 
modulo~$4$.
\begin{lemma}\label{lemma:iterates-of-f}
Let $K$ be a number field. Let $f(T) = T^2 + c \in O_K[T]$. For all 
$k \ge 1$, we have
\begin{equation*}\label{eqn:poly-iterates-of-f}
    f^k(T) 
    \equiv {\left(T^{2^{k - 1}} + f^{k-1}(0)\right)}^2 + c \mod 4 O_K[T].
\end{equation*}
Moreover, if $L / K$ is a finite extension and $\alpha \in L$, let 
$\beta \in O_L$ be such that $\beta\alpha \in O_L$. Then 
\begin{equation}\label{eqn:iterates-of-f}
    \beta^{2^k}f^k(\alpha) 
    \equiv \beta^{2^k}{\left(\alpha^{2^{k - 1}} + f^{k - 1}(0)\right)}^2 + \beta^{2^k}c 
    \mod 4 O_L
\end{equation}
for all $k \ge 1$.
\end{lemma}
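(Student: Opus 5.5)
The polynomial congruence comes first, by induction on $k$. For $k = 1$ the claim reads $f(T) \equiv (T + f^0(0))^2 + c = T^2 + c$, because $f^0(0) = 0$; this holds with equality. For the inductive step, assume
\[
f^{k}(T) = {\left(T^{2^{k-1}} + f^{k-1}(0)\right)}^2 + c + 4G(T)
\]
for some $G \in O_K[T]$. Setting $A = T^{2^{k-1}} + f^{k-1}(0)$, the hypothesis can be rewritten as $f^k(T) = A^2 + c + 4G$. Expanding,
\[
A^2 = T^{2^k} + 2f^{k-1}(0)T^{2^{k-1}} + f^{k-1}(0)^2 ,
\]
so $f^k(T) = T^{2^k} + f^k(0) + 2f^{k-1}(0)T^{2^{k-1}} + 4G$, using $f^{k-1}(0)^2 + c = f^k(0)$. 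Writing $B = T^{2^k} + f^k(0)$, this becomes $f^k(T) = B + 2D$ with $D \in O_K[T]$. Squaring gives
\[
f^k(T)^2 = B^2 + 4BD + 4D^2 \equiv B^2 \pmod{4O_K[T]} .
\]
Therefore $f^{k+1}(T) = f^k(T)^2 + c \equiv {\left(T^{2^k} + f^k(0)\right)}^2 + c$, which is the claim for $k+1$. The only ingredient is the standard fact that $x \equiv y \pmod 2$ implies $x^2 \equiv y^2 \pmod 4$. This is exactly why the inductive hypothesis is stated in the "square plus $c$" form: it keeps track of the congruence mod $2$ of $f^k(T)$ itself.

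For the second statement, $\alpha$ may not be integral, so I would not substitute directly. Instead I would homogenize. Write $f^k(T) = {\left(T^{2^{k-1}} + f^{k-1}(0)\right)}^2 + c + 4G_k(T)$ with $G_k \in O_K[T]$. Since $\deg f^k = 2^k$, the polynomial $G_k$ has degree at most $2^k$. Multiply the identity by $\beta^{2^k}$ and evaluate at $\alpha$. Each monomial $g_j\alpha^j$ of $G_k$ becomes $g_j\beta^{2^k - j}(\beta\alpha)^j$. This lies in $O_L$ because $\beta \in O_L$, $\beta\alpha \in O_L$ and $j \le 2^k$. Hence $\beta^{2^k}\cdot 4G_k(\alpha) \in 4O_L$, which is the congruence~\eqref{eqn:iterates-of-f}.

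Neither step is a real obstacle. The point needing care is the degree bound on $G_k$, which is what makes the homogenization work. It holds because both $f^k(T)$ and ${\left(T^{2^{k-1}} + f^{k-1}(0)\right)}^2 + c$ are monic of degree $2^k$, so $4G_k$ is their difference and has degree at most $2^k$.

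Observe also that the two sides of~\eqref{eqn:iterates-of-f} need not be integral individually; only their difference lies in $4O_L$. If integrality of each side is wanted, it follows the same way. One has $\beta^{2^k}{\left(\alpha^{2^{k-1}} + f^{k-1}(0)\right)}^2 = {\left((\beta\alpha)^{2^{k-1}} + \beta^{2^{k-1}} f^{k-1}(0)\right)}^2$, and $f^{k-1}(0) \in O_K$ because $c \in O_K$.
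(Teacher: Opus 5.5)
Your proof is correct. The second part matches the paper's argument: write $f^k$ as the claimed expression plus $4F$ with $\deg F\le 2^k$, evaluate at $\alpha$, and multiply by $\beta^{2^k}$. The paper only quotes the first part as a special case of Habegger--Schmidt's Lemma~2.3, so your direct induction via $x\equiv y \pmod{2}\Rightarrow x^2\equiv y^2 \pmod{4}$ supplies a valid self-contained proof, and your monicity observation justifies the degree bound that the paper merely asserts.
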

\begin{proof}
The first part of the statement is a special case of~\cite[Lemma 2.3]{MR4726503}. 
As for the second part of the statement, let $F \in O_K[T]$ be such 
that $f^k(T) = {(T^{2^{k - 1}} + f^{k-1}(0))}^2 + c + 4 F(T)$. Note 
that $\deg F \le 2^k$. Moreover, note that $\alpha \beta \in O_{L}$. 
Specialize the last equality at $T = \alpha$, multiply by $\beta^{2^k}$ 
to obtain
\[
    \beta^{2^k} f^k(\alpha) 
    = \beta^{2^k}{(\alpha^{2^{k - 1}} + f^{k-1}(0))}^2 
    + \beta^{2^k} c 
    + 4 \beta^{2^k} F(\alpha).
\]
Since $\deg F \le 2^k$, it follows that $\beta^{2^k} F(\alpha) \in 
O_L$ and the proof follows.
\end{proof}

The remaining lemmas are needed to study the integrality of the 
constant term of $f$ and its $2$-adic valuation. Consider the 
polynomial $F(T, C) = T^2 + C \in \ZZ[T, C]$. Define the iterates 
$F^1(T, C) = F(T, C)$ and $F^k(T, C) = F(F^{k - 1}(T, C), C)$ for $k 
\ge 2$. Then $F^k(T, c) = f^k(T)$ for all $k \ge 1$.

\begin{lemma}\label{lemma:integrality-of-c}
Let $f(T) = T^2 + c \in \overline \QQ[T]$ be PCF. Then $c$ is an 
algebraic~integer. 
\end{lemma}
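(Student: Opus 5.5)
Since $0$ is preperiodic, there exist integers $m > n \ge 0$ with $f^m(0) = f^n(0)$. I will rewrite this as a polynomial identity in $c$ using the universal iterates $F^k(T, C) \in \ZZ[T, C]$ introduced above. Set $P_k(C) = F^k(0, C) \in \ZZ[C]$, with the convention $P_0 = 0$. Then $f^k(0) = P_k(c)$, and $c$ is a root of
\[
    Q(C) = P_m(C) - P_n(C) \in \ZZ[C].
\]
It therefore suffices to show that $Q$ is monic up to sign, or at least has leading coefficient $\pm 1$ and is nonzero.

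The key step is a degree and leading-coefficient computation for $P_k$. I will show by induction on $k \ge 1$ that $P_k$ is monic of degree $2^{k-1}$. For the base cases, $P_1 = C$ and $P_2 = C^2 + C$. For the inductive step, $P_{k+1} = P_k^2 + C$. If $P_k$ is monic of degree $2^{k-1} \ge 1$, then $P_k^2$ is monic of degree $2^k \ge 2$. Adding $C$ therefore changes neither the degree nor the leading coefficient.

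Now take $m > n$. If $n = 0$, then $Q = P_m$, which is monic of degree $2^{m-1}$. If $n \ge 1$, then $\deg P_m = 2^{m-1} > 2^{n-1} = \deg P_n$. So $Q$ is monic of degree $2^{m-1}$, and in particular it is nonzero. Since $c$ is a root of a monic polynomial with integer coefficients, $c$ is an algebraic integer.

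I do not expect a real obstacle here. The only point needing care is to guarantee $Q \neq 0$ with leading coefficient $1$, and this follows from the strict growth of degrees, so that the leading term of $P_m$ is never cancelled by $P_n$.
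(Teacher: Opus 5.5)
Your proof is correct and follows the paper's argument: $c$ is a root of $F^m(0,C) - F^n(0,C) \in \ZZ[C]$, which is monic of degree $2^{m-1}$ because $\deg_C F^k(0,C) = 2^{k-1}$ grows strictly in $k$. Your explicit induction for monicity and your handling of $n = 0$ fill in details that the paper leaves implicit; the paper avoids $n=0$ by taking $n$ positive.
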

\begin{proof}
Since $f$ is PCF, there exist positive integers $n < m$ such that $f^n(0) = f^m(0)$. 
Thus $F^m(0, c) = F^n(0, c)$, so $c$ is a root of the polynomial
\(
F^m(0, C) - F^n(0, C) \in \ZZ[C].
\)
Since $m > n$, the polynomial $F^m(0, C) - F^n(0, C)$ is monic of 
degree $2^{m - 1}$. Hence, $c$ is an algebraic integer.
\end{proof}

\begin{lemma}\label{lemma:F-properties}
For all $k \ge 1$, the polynomial $F^k(0, C)$ is divisible by $C$ and 
$\deg_C F^k(0, C) = 2^{k - 1}$.
\end{lemma}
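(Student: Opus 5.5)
We argue by induction on $k$, using the recursion $F^{k}(0, C) = {F^{k-1}(0, C)}^2 + C$, which follows directly from the definition $F^k(T, C) = F(F^{k-1}(T, C), C)$ evaluated at $T = 0$. It is convenient to prove the slightly stronger statement that $F^k(0, C)$ is a monic polynomial in $\ZZ[C]$, divisible by $C$, of degree $2^{k-1}$. Monicity is what makes the degree count clean, and it is consistent with the argument of \Cref{lemma:integrality-of-c}.

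For the base case $k = 1$, we have $F^1(0, C) = 0^2 + C = C$. This is monic, divisible by $C$, and of degree $1 = 2^0$.

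For the inductive step, assume the claim for $k - 1 \ge 1$, so that $F^{k-1}(0, C) = C\, G(C)$ with $G \in \ZZ[C]$ monic of degree $2^{k-2} - 1$. Then
\[
    F^k(0, C) = C^2 G(C)^2 + C = C\bigl(C G(C)^2 + 1\bigr),
\]
which is visibly divisible by $C$. For the degree, note that ${F^{k-1}(0, C)}^2$ is monic of degree $2 \cdot 2^{k-2} = 2^{k-1}$. Adding $C$, which has degree $1$, does not affect the leading term as long as $2^{k-1} > 1$, and this holds since $k \ge 2$. Hence $F^k(0, C)$ is monic of degree $2^{k-1}$.

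There is no real obstacle here. The only point needing care is that the degree-one term $C$ never cancels the leading coefficient of the square; this is guaranteed because $2^{k-1} \ge 2$ for $k \ge 2$. Tracking monicity in the induction, rather than only the degree, makes this comparison immediate.
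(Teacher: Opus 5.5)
Your proof is correct and matches the paper's argument: induction on $k$ via $F^k(0,C) = F^{k-1}(0,C)^2 + C = C\bigl(C G(C)^2 + 1\bigr)$, with the degree claim following because $2^{k-1} > 1$. Tracking monicity is not needed for the degree, since over $\ZZ$ the square already has exactly twice the degree, but it does make explicit the monicity that \Cref{lemma:integrality-of-c} uses.
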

\begin{proof}
We proceed by induction on $k \ge 1$. If $k = 1$, we have $F^1(0, C) = C$, 
which is divisible by $C$ and has degree $\deg_C F^1(0, C) = 1$. If 
$k \ge 2$, assume that $F^{k - 1}(0, C)$ is divisible by $C$ and that 
$\deg_C F^{k - 1}(0, C) = 2^{k - 2}$. In particular, there exists 
$G_{k - 1} \in \ZZ[C]$ such that $F^{k - 1}(0, C) = C G_{k - 1}(C)$. 
Then
\begin{equation}\label{eqn:Fk}
    F^{k}(0, C) 
    = {F^{k - 1}(0, C)}^2 + C 
    = C^2 G_{k - 1}(C)^2 + C 
    = C(C G_{k - 1}(C)^2 + 1).
\end{equation}
So $F^{k}(0, C)$ is divisible by $C$. As for the degree, since
\[
    F^k(0, C) = F^{k - 1}(0, C)^2 + C 
\]
and by inductive hypothesis $\deg_C F^{k - 1}(0, C)^2 = 2^{k - 1} > 1$, 
it follows that 
\[
    \deg_C F^k(0, C) 
    = \deg_C \left( F^{k - 1}(0, C)^2 + C \right)
    = 2 \deg_C F^{k - 1}(0, C)
    = 2 \cdot 2^{k - 2}
    = 2^{k - 1}.
\]
The proof follows.
\end{proof}

For all $k \in \NN$, define the polynomial 
\[
    A_k(C) = F^{k + 1}(0, C) - F^k(0, C) \in \ZZ[C].
\]

\begin{lemma}\label{lemma:lemma-for-2-adic-valuation}
For each $k \ge 1$ there exists an irreducible $B_k \in \ZZ[C]$ such 
that $A_{k + 1}(C) = C B_k(C) A_k(C)$.
\end{lemma}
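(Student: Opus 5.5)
The plan is to obtain the factorization from a difference of squares, and then prove irreducibility of the cofactor with Eisenstein's criterion at the prime $2$.

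\emph{Factorization.} Since $F^{j+1}(0,C) = F^j(0,C)^2 + C$ for all $j \ge 1$, the constant $C$ cancels in the difference:
\[
A_{k+1}(C) = F^{k+1}(0,C)^2 - F^{k}(0,C)^2 = A_k(C)\,\bigl(F^{k+1}(0,C) + F^k(0,C)\bigr).
\]
By \Cref{lemma:F-properties}, both $F^{k+1}(0,C)$ and $F^k(0,C)$ are divisible by $C$ in $\ZZ[C]$. We may therefore define
\[
B_k(C) = \frac{F^{k+1}(0,C) + F^k(0,C)}{C} \in \ZZ[C].
\]
This gives $A_{k+1} = C B_k A_k$. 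Again by \Cref{lemma:F-properties}, the summand $F^{k+1}(0,C)$ has degree $2^k$ and $F^k(0,C)$ has degree $2^{k-1}$. Moreover, an immediate induction from $F^{j+1} = (F^j)^2 + C$ shows that $F^{j}(0,C)$ is monic. Hence $B_k$ is monic of degree $2^k - 1$.

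\emph{Reduction modulo $2$.} In $\mathbb{F}_2[C]$ squaring is additive. Induction then gives
\[
F^j(0,C) \equiv C^{2^{j-1}} + C^{2^{j-2}} + \dots + C \mod 2.
\]
Consequently $F^{k+1}(0,C) + F^k(0,C) \equiv C^{2^k} \mod 2$, and so $B_k(C) \equiv C^{2^k - 1} \mod 2$. Thus every non-leading coefficient of $B_k$ is even.

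\emph{Constant term.} The constant term of $B_k$ is the sum of the coefficients of $C$ in $F^{k+1}(0,C)$ and in $F^k(0,C)$. By \eqref{eqn:Fk}, $F^j(0,C) = C\bigl(C G_{j-1}(C)^2 + 1\bigr)$ for $j \ge 2$, and $F^1(0,C) = C$. So each of these linear coefficients equals $1$, and $B_k(0) = 2$, which is not divisible by $4$.

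\emph{Conclusion.} Eisenstein's criterion at $2$ now shows that $B_k$ is irreducible over $\QQ$. Since $B_k$ is monic, hence primitive, Gauss's lemma gives irreducibility in $\ZZ[C]$.

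The only step needing care is the computation modulo $2$, namely that all intermediate coefficients vanish. The Frobenius identity $(a+b)^2 \equiv a^2 + b^2 \pmod 2$ makes this routine.
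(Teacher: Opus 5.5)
Your proof is correct and follows the paper's argument. You use the same difference-of-squares factorization with $B_k = (F^{k+1}(0,C)+F^k(0,C))/C = G_{k+1}+G_k$, then Eisenstein at $2$ via $B_k \equiv C^{2^k-1} \bmod 2$ and $B_k(0)=2$. The differences are cosmetic: you reduce $F^j(0,C)$ rather than $G_j$ modulo $2$, and you make explicit the monicity and Gauss-lemma steps that the paper leaves implicit.
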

\begin{proof} 
We first establish the existence of such polynomials $B_k$. 
\Cref{lemma:F-properties} implies that for each $k \ge 1$ there 
exists $G_k \in \ZZ[C]$ such that $F^k(0, C) = C G_k(C)$. Therefore
\begin{align*}
A_{k + 1}(C)  &= F^{k + 2}(0, C) - F^{k + 1}(0, C)\\
        &= F^{k + 1}(0, C)^2 - F^k(0, C)^2\\
        &= (F^{k + 1}(0, C) + F^k(0, C)) A_k(C)\\
        &= C (G_{k + 1}(C) + G_k(C)) A_k(C).
\end{align*}
Define $B_k(C) = G_{k + 1}(C) + G_k(C)$.

Next we prove the irreducibility of $B_k$. We will prove that it 
satisfies Eisenstein's criterion at the prime $2$. It suffices to 
show that every nonleading coefficient of $B_k$ is even, and the 
constant term is not divisible by $4$.

Note that \Cref{lemma:F-properties} implies $\deg_C G_k = 2^{k - 1} - 1$. 
Since $\deg_C G_{k + 1} > \deg_C G_k$, we conclude $\deg_C B_k = 2^k - 1$.
To prove the congruence $B_k \equiv C^{2^k - 1} \mod 2\ZZ[C]$, we 
first note that an induction on $k \ge 1$ implies that
\[
	G_k(C) \equiv C^{2^{k - 1} - 1} + \cdots + C + 1 \mod 2 \ZZ[C].
\]
Therefore, we conclude that
\begin{align*}
	B_k
	&= G_{k + 1}(C) + G_k(C)\\
	&\equiv (C^{2^k - 1} + C^{2^{k - 1} - 1} + \cdots + C + 1) 
    + (C^{2^{k - 1} - 1} + \cdots + C + 1)\\
	&\equiv C^{2^k - 1} \mod 2\ZZ[C]
\end{align*}
for all $k \ge 1$.
As for the second condition to apply Eisenstein's criterion, it 
follows from the definition that $G_k(0) = 1$ for all $k \ge 1$. 
Therefore $B_k(0) = 2 \neq 0 \mod 4 \ZZ[C]$. 

Hence, every nonleading coefficient of $B_k$ is even, and the 
constant term is not divisible by $4$. By Eisenstein's criterion, the 
polynomial $B_k$ is irreducible.\qedhere
\end{proof}

\subsection{Unramified case}

In this subsection, we write $L = K(\zeta)$ to simplify the notation. 
Suppose that $2$ is unramified in $L$. Recall that $d = [K \colon \QQ]$.

Let $\p$ be a prime ideal of $K$ above $2$. Let $\q$ be a prime ideal 
of~$L$ lying above~$\p$ and denote by $\artin{L}{K}{\q}$ the Artin 
symbol at~$\q$. This is defined as the Frobenius element at $\q$ in 
$\Gal(L / K)$, which is the unique Galois automorphism such that
\begin{equation}\label{eqn:def-frobenius}
    \artin{L}{K}{\q}(\alpha) \equiv \alpha^{2^{r(\p | 2)}} \mod \q
\end{equation}
for all $\alpha \in O_{L}$. The Frobenius elements at two distinct 
prime ideals lying above $\p$ are conjugate. Since $L / K$ is 
abelian, we deduce that $\artin{L}{K}{\q}$ does not depend on~$\q$, 
and we denote it by $\artin{L}{K}{\p}$. Since~\eqref{eqn:def-frobenius} 
holds for all prime ideals~$\q$ lying above~$\p$, we conclude that
\[
    \artin{L}{K}{\p} (\alpha)
    \equiv \alpha^{2^{r(\p | 2)}} \mod \p O_{L}
\]
for all $\alpha \in O_{L}$. Note that this uses the fact that $\q$ is 
unramified at $\p$.

We now obtain a congruence modulo $2 O_L$. First, note that $L / \QQ$ 
is a finite Galois extension and $\Gal(L / \QQ)$ is isomorphic to a 
subgroup of $\Gal(K/\QQ) \times \Gal(\QQ(\zeta) / \QQ)$, where the 
isomorphism is given by the restrictions. Under this embedding, the 
first component of $\sigma^d$ is trivial, since every element of 
$\Gal(K / \QQ)$ has order dividing $d$. Thus $\sigma^d$ lies in a 
subgroup isomorphic to $\{1\} \times \Gal(\QQ(\zeta) / \QQ)$. It 
follows that $\sigma^{d}$ lies in the center of $\Gal(L / \QQ)$ for 
all $\sigma \in \Gal(L / \QQ)$. Note that this property need not hold 
for an arbitrary abelian extension of $K$. Consider $\varphi_{\q} = 
\artin{L}{\QQ}{\q}^{d}$. If $\q$ and $\q'$ are two prime ideals in 
$L$ lying above $2$, the automorphism $\varphi_{\q}$ and 
$\varphi_{\q'}$ 
are conjugate inside $\Gal(L / \QQ)$. Since they lie in the center of 
$\Gal(L / \QQ)$, they must be equal. 
We therefore drop the dependence on $\q$ from the notation. Thus we 
obtain the congruence 
\begin{equation}\label{eqn:frob-q}
    \varphi(\alpha) 
    \equiv \alpha^{2^{d}} \mod \q
\end{equation}
for all $\alpha \in O_L$ and $\q$ lying above $2$. Since $L$ is 
unramified above $2$, we have $2 O_L = \prod_{\q | 2} \q$. Hence, 
since~\eqref{eqn:frob-q} holds for all prime ideals $\q$ above $2$, 
we conclude that
\begin{equation}\label{eqn:frobenius}
    \varphi(\alpha) 
    \equiv \alpha^{2^{d}} \mod 2 O_L.
\end{equation}
As a final remark, the restriction of $\varphi$ to $K$ is the $d$th 
power of an element in $\Gal(K / \QQ)$, hence it is trivial. It 
follows that $\varphi \in \Gal(L / K)$.

\begin{lemma}\label{lemma:idiot-computation}
Let $R$ be a commutative ring extension of $\ZZ$. If $x - y \in 2 R$, 
then ${(x + z)}^2 - {(y + z)}^2 \in 4 R$ for all $z \in R$.
\end{lemma}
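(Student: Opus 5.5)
The plan is a direct algebraic expansion. Since $R$ is commutative, I will use the difference-of-squares factorization
\[
    {(x + z)}^2 - {(y + z)}^2 = \bigl((x + z) - (y + z)\bigr)\bigl((x + z) + (y + z)\bigr) = (x - y)(x + y + 2z).
\]
This isolates the factor $x - y$, which by hypothesis lies in $2R$.

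Next I will show that the second factor $x + y + 2z$ also lies in $2R$. Write $x - y = 2w$ with $w \in R$. Then $x + y = (x - y) + 2y = 2(w + y)$, so
\[
x + y + 2z = 2(w + y + z) \in 2R.
\]

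Multiplying the two factors gives $(x - y)(x + y + 2z) = 2w \cdot 2(w + y + z) = 4w(w + y + z) \in 4R$, which is the claim. There is no real obstacle. The only point to keep in mind is that $2R$ and $4R$ denote the principal ideals generated by the images of $2$ and $4$ in $R$. Hence $2R \cdot 2R \subseteq 4R$ holds in any commutative ring, and no torsion-freeness or integrality assumption on $R$ is needed.
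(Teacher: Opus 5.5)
Your proof is correct and is essentially the paper's argument. Both write $x - y = 2w$ and compute directly, and your product $4w(w+y+z)$ is exactly the paper's correction term $4(w^2 + w(y+z))$. Factoring as a difference of squares is just a different bookkeeping of the paper's substitution $x = y + 2w$ followed by expansion.
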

\begin{proof}
Let $w \in R$ be such that $x - y = 2 w$. Then
\[
	{(x + z)}^2
	= {(y + 2w + z)}^2
	= {(y + z)}^2 + 4{(w^2 + w(y + z))}
\]
and the proof follows.
\end{proof}

\begin{prop}\label{prop:unramified-congruence}
Let $f(T) = T^2 + c \in O_K[T]$ be PCF. Choose positive integers $n 
< m$ such that $f^{n - 1}(0) = f^{m - 1}(0)$. 
Then for each $\alpha \in L$ there exists $\beta \in O_L$ such that
\begin{itemize}
    \item[(i.)] $|\beta|_v = {\max\{1, |\alpha|_v\}}^{-1}$ for all 
    places $v$ above $2$;
    \item[(ii.)] $\beta^{2^k} f^k(\alpha) \in O_L$ for all $k \ge 0$;
    \item[(iii.)] $\varphi^{m - n} \left(\beta^{2^{dn}}f^{dn}(\alpha)\right) 
    \equiv \beta^{2^{dm}}f^{dm}(\alpha) \mod 4 O_{L}$.
\end{itemize}
\end{prop}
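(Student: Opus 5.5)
We take $\beta$ to be the element supplied by \Cref{lemma:ad-denominator}, applied to $L$ with $p=2$. This gives $\beta\in O_L$, $\gamma:=\alpha\beta\in O_L$ and $|\beta|_v=\max\{1,|\alpha|_v\}^{-1}$ for all $v\mid 2$, so (i.) holds at once.

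For (ii.), recall that $c\in O_K$ by \Cref{lemma:integrality-of-c}. Hence $f^k(T)\in O_K[T]$ has degree $2^k$; write $f^k(T)=\sum_{j\le 2^k}a_jT^j$. Then
\[
\beta^{2^k}f^k(\alpha)=\sum_j a_j\gamma^j\beta^{2^k-j}\in O_L .
\]

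For (iii.) we apply \eqref{eqn:iterates-of-f} with $k=dn$ and distribute the powers of $\beta$. This gives
\[
\beta^{2^{dn}}f^{dn}(\alpha)\equiv \bigl(\gamma^{2^{dn-1}}+\beta^{2^{dn-1}}f^{dn-1}(0)\bigr)^2+\beta^{2^{dn}}c \mod 4O_L ,
\]
and the analogous congruence holds with $m$ in place of $n$. Since $\varphi$ is a ring automorphism of $O_L$, it preserves $4O_L$, so we may apply $\varphi^{m-n}$ to the first congruence. We use three facts.

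First, by \eqref{eqn:frobenius} and induction, $\varphi^{m-n}(x)\equiv x^{2^{d(m-n)}}\mod 2O_L$ for all $x\in O_L$. Powers preserve congruences mod $2$, so
\[
\varphi^{m-n}(\gamma^{2^{dn-1}})\equiv\gamma^{2^{dm-1}}\quad\text{and}\quad\varphi^{m-n}(\beta^{2^{dn-1}})\equiv\beta^{2^{dm-1}} \mod 2O_L .
\]

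Second, $\varphi\in\Gal(L/K)$ fixes $f^{dn-1}(0)\in O_K$. Moreover, $f^{n-1}(0)=f^{m-1}(0)$ implies $f^{j}(0)=f^{j+(m-n)}(0)$ for all $j\ge n-1$. Since $dn-1\ge n-1$ and $(dm-1)-(dn-1)=d(m-n)$ is a multiple of $m-n$, we get the exact equality $f^{dn-1}(0)=f^{dm-1}(0)$. (When $n=1$ we use $f^0(0)=0$.) Hence the image of the inner bracket under $\varphi^{m-n}$ is congruent mod $2O_L$ to $\gamma^{2^{dm-1}}+\beta^{2^{dm-1}}f^{dm-1}(0)$. \Cref{lemma:idiot-computation}, applied with $R=O_L$ and $z=0$, then upgrades the congruence of the squares to one mod $4O_L$.

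Third, for the constant-term contribution, $\varphi^{m-n}(\beta)\equiv\beta^{2^{d(m-n)}}\mod 2O_L$. Because $2^{dn}\ge 2$, the same lemma (with $x^2\equiv y^2 \mod 4$ whenever $x\equiv y\mod 2$, iterated) yields $\varphi^{m-n}(\beta^{2^{dn}})\equiv\beta^{2^{dm}}\mod 4O_L$. Since $\varphi$ fixes $c$, this term matches as well.

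Combining the two pieces with the $m$-version of \eqref{eqn:iterates-of-f} gives (iii.). The main point needing care is that the Frobenius congruence holds only mod $2$, so every mod-$4$ statement must come from squaring, via \Cref{lemma:idiot-computation}. This is why the expression is split as a square plus the $\beta^{2^{dn}}c$ term, each of which is handled by squaring a mod-$2$ congruence. The other essential input is the exact periodicity $f^{dn-1}(0)=f^{dm-1}(0)$ of the critical orbit.
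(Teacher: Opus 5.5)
Your proof is correct and follows the paper's argument. It uses the same $\beta$ from \Cref{lemma:ad-denominator}, \Cref{lemma:iterates-of-f} at $k=dn$ and $k=dm$, the mod-$2$ Frobenius congruence upgraded to mod $4$ by squaring, and the exact equality $f^{dn-1}(0)=f^{dm-1}(0)$. The differences are minor streamlinings: you compare the whole inner bracket modulo $2$ before squaring, whereas the paper applies \Cref{lemma:idiot-computation} with a nonzero $z$ and then treats $dn=1$ and $dn\ge 2$ separately, so your version needs no case split; and your coefficient expansion for (ii.) replaces the paper's induction.
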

\begin{proof}
\Cref{lemma:ad-denominator} applied with $L = K(\zeta)$ and $p = 2$ 
implies that there exists $\beta \in O_{L}$ such that $\alpha \beta 
\in O_L$ and $|\beta|_v = {\max\{1, |\alpha|_v\}}^{-1}$ for all 
places $v$ above~$2$, so point (i.) follows. As for point (ii.), we 
proceed by induction on $k \ge 0$. The case $k = 0$ follows from the 
above discussion. If $k \ge 1$, then 
\[
    \beta^{2^k} f^k(\alpha)
    = {(\beta^{2^{k - 1}} f^{k - 1}(\alpha))}^2 + \beta^{2^{k}}c
\]
and the proof follows since $c \in O_K$ and $\beta^{2^{k - 1}} f^{k - 1}(\alpha)$ 
lies in $O_L$ by induction on $k \ge 0$.
It remains to prove (iii.). The second part 
of~\Cref{lemma:iterates-of-f} applied with $k = dn$ implies that     
\[
    \beta^{2^{dn}}f^{dn}(\alpha) 
    = \beta^{2^{dn}}{\left(\alpha^{2^{dn - 1}} 
    + f^{dn - 1}(0)\right)}^2 + \beta^{2^{dn}}c 
    + 4 \gamma
\]
for some $\gamma \in O_L$.
Applying $\varphi^{m - n}$ to both sides yields
\begin{multline}\label{eqn:unramified-lemma}
    \varphi^{m - n} \left(\beta^{2^{dn}}f^{dn}(\alpha)\right)
    \equiv \left(\varphi^{m - n} (\beta\alpha)^{2^{dn - 1}} 
    + \varphi^{m - n} (\beta^{2^{dn - 1}})f^{dn - 1}(0)\right)^2\\
	+ \varphi^{m - n} (\beta^{2^{dn}})c
    \mod 4O_L.
\end{multline}
Here we used that $c, f^{dn - 1}(0), f^{dm - 1}(0) \in K$, so they 
are fixed by $\varphi$. Note that iterating~\eqref{eqn:frobenius} on 
$\beta \alpha$ gives $\varphi^{m - n}(\beta \alpha) \equiv 
(\beta \alpha)^{2^{d(m - n)}} \mod 2O_L$ and raising to $2^{dn - 1}$ 
yields $\varphi^{m - n}(\beta \alpha)^{2^{dn - 1}} \equiv 
(\beta \alpha)^{2^{dm - 1}} \mod 2O_L$. Then, 
\Cref{lemma:idiot-computation} applies with $R = O_L$ and 
\[
	(x, y, z) 
	= \left(\varphi^{m - n}(\beta \alpha)^{2^{dn - 1}}, 
	  (\beta \alpha)^{2^{dm - 1}},
	  \varphi^{m - n} (\beta^{2^{dn - 1}})f^{dn - 1}(0)\right)
\]
giving the congruence
\begin{multline*}
	{\left(\varphi^{m - n} (\beta\alpha)^{2^{dn - 1}} 
    + \varphi^{m - n} (\beta^{2^{dn - 1}})f^{dn - 1}(0)\right)}^2\\
	\equiv \left(
	\beta^{2^{dm - 1}}\alpha^{2^{dm - 1}} 
    + \varphi^{m - n} (\beta^{2^{dn - 1}})f^{dn - 1}(0)
    \right)^2 
    \mod 4O_{L}.
\end{multline*}
Substituting this last congruence into~\eqref{eqn:unramified-lemma} yields
\begin{multline}\label{eqn:unramified-lemma-2}
    \varphi^{m - n} \left(\beta^{2^{dn}}f^{dn}(\alpha)\right)\\
	\equiv \left(
	\beta^{2^{dm - 1}}\alpha^{2^{dm - 1}} 
    + \varphi^{m - n} (\beta^{2^{dn - 1}})f^{dn - 1}(0)
    \right)^2
    + \varphi^{m - n} (\beta^{2^{dn}})c
    \mod 4O_{L}
\end{multline}

The hypothesis on $m$ and $n$ implies that $f^{n - 1}(0) = f^{n - 1 + 
(m - n)}$. Applying further iterates of $f$ gives $f^j(0) = f^{j + m 
- n}(0)$ for every $j \ge n - 1$. Since $dn - 1 \ge n - 1$ and $dm - 
1 = (dn - 1) + d(m - n)$ it follows that $f^{dn - 1}(0) = f^{dm - 1}(0)$.

If $dn = 1$, then $d = n = 1$ and $f^{dm - 1}(0) = f^{dn - 1}(0) = 0$. 
Since $\varphi$ fixes $c$, we have
\[
    \varphi^{m-1}\left(\beta^2 f(\alpha)\right)
    = \varphi^{m-1}(\beta\alpha)^2
    + \varphi^{m-1}(\beta)^2c
    \equiv \beta^{2^m}\left(\alpha^{2^m}+c\right)
    \mod 4O_L,
\]
where the congruence follows from iterating~\eqref{eqn:frobenius} and 
squaring.
Since $f^{m - 1}(0) =~0$, \Cref{lemma:iterates-of-f} applied with 
$k = m$ gives $\beta^{2^m}(\alpha^{2^m} + c) \equiv 
\beta^{2^m}f^m(\alpha) \mod 4 O_L$. Thus $\varphi^{m - 1} \left(
\beta^{2}f(\alpha)\right) \equiv \beta^{2^m} f^m(\alpha) \mod 4O_L$, 
and the proof in this case follows.

If $dn \ge 2$, then a repeated application of the Frobenius implies 
that $\varphi^{m - n} (\beta^{2^{dn - 1}}) \equiv \beta^{2^{dm - 1}} 
\mod 4 O_{L}$, and squaring yields $\varphi^{m - n} (\beta^{2^{dn}}) 
\equiv \beta^{2^{dm}} \mod 4 O_{L}$. Thus~\eqref{eqn:unramified-lemma-2} becomes
\begin{equation*}
    \varphi^{m - n} \left(\beta^{2^{dn}}f^{dn}(\alpha)\right)
	\equiv \left(
	\beta^{2^{dm - 1}}\alpha^{2^{dm - 1}} + \beta^{2^{dm - 1}}f^{dm - 1}(0)
    \right)^2 + \beta^{2^{dm}}c
    \mod 4O_{L}
\end{equation*}

\Cref{lemma:iterates-of-f}, applied with $k = dm$, gives
\begin{equation*}
	\varphi^{m - n} \left(\beta^{2^{dn}}f^{dn}(\alpha)\right)
	\equiv \beta^{2^{dm}} f^{dm}(\alpha) \mod 4 O_{L}.
\end{equation*}
The proof follows.
\end{proof}

\begin{remark}
If $\alpha \in O_{K(\zeta)}$, then in the above proposition one can 
take $\beta = 1$. In particular, it follows that
\begin{equation}\label{eqn:eqn-unr-alg-int}
    \varphi^{m - n} \left(f^{dn}(\alpha)\right) \equiv f^{dm}(\alpha) \mod 4 O_{L}
\end{equation}
for all $\alpha \in O_{K(\zeta)}$.
\end{remark}

\begin{lemma}\label{lemma:unramified-equality-case}
Let $f \in O_K[T]$. Let $\alpha \in {K(\zeta)}$. If there exists $n < m$ such that
\[
    \varphi^{m - n} \left(f^{dn}(\alpha)\right)
    = f^{dm}(\alpha),
\] 
then $\alpha$ is preperiodic.
\end{lemma}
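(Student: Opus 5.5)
The plan is to use the canonical height $\hat h_f$ together with the facts that Galois automorphisms preserve it and that $f$ multiplies it by $2$. Since $f \in O_K[T]$ has coefficients in $K$, the function $\hat h_f$ is invariant under every automorphism of $\overline{\QQ}$ fixing $K$. This holds because $\hat h_f(\alpha) = \lim_{k \to \infty} 2^{-k} h(f^k(\alpha))$, the Weil height is Galois invariant, and $\tau(f^k(\alpha)) = f^k(\tau(\alpha))$ whenever $\tau$ fixes $K$. We showed above that $\varphi \in \Gal(L/K)$, so $\varphi^{m-n}$ fixes $K$ and hence preserves $\hat h_f$ on $L$.

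Applying $\hat h_f$ to both sides of the assumed equality $\varphi^{m-n}(f^{dn}(\alpha)) = f^{dm}(\alpha)$ and using property (i.) of the canonical height with $\deg f = 2$, I get
\[
2^{dn}\hat h_f(\alpha) = \hat h_f\bigl(f^{dn}(\alpha)\bigr) = \hat h_f\bigl(\varphi^{m-n}(f^{dn}(\alpha))\bigr) = \hat h_f\bigl(f^{dm}(\alpha)\bigr) = 2^{dm}\hat h_f(\alpha).
\]
Because $n < m$ and $d \ge 1$, we have $2^{dm} \neq 2^{dn}$, so $\hat h_f(\alpha) = 0$. By the Dynamical Kronecker Theorem, $\alpha$ is then preperiodic.

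The argument is short, and the only step that needs care is the Galois invariance of $\hat h_f$, which I would justify from the limit formula as above. There is an alternative that avoids heights. Let $\psi = \varphi^{m-n}$, which has finite order $N$ in $\Gal(L/K)$. Since $\psi$ commutes with $f$, iterating the hypothesis gives $\psi^j(f^{dn + jd(m-n)}(\alpha)) = f^{dn + (j+1)d(m-n)}(\alpha)$ for all $j \ge 0$. Chaining these relations yields $f^{dn + Nd(m-n)}(\alpha) = \psi^N(f^{dn}(\alpha)) = f^{dn}(\alpha)$, which is preperiodicity directly. Either route works; I would present the height argument and mention the finite-order argument as a remark.
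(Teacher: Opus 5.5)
Your proof is correct and is the paper's argument: invariance of $\hat h_f$ under $\Gal(L/K)$, together with $\hat h_f(f^j(\alpha)) = D^j \hat h_f(\alpha)$, forces $\hat h_f(\alpha)=0$, and the Dynamical Kronecker Theorem finishes (the paper writes $D=\deg f$ because the lemma is stated for arbitrary $f \in O_K[T]$; you specialized to $D=2$). In your finite-order remark the iterated relation should read $\psi\bigl(f^{dn+jd(m-n)}(\alpha)\bigr) = f^{dn+(j+1)d(m-n)}(\alpha)$, with $\psi$ rather than $\psi^j$, since at $j=0$ your version would assert $f^{dn}(\alpha)=f^{dm}(\alpha)$; the chained conclusion $\psi^N\bigl(f^{dn}(\alpha)\bigr) = f^{dn+Nd(m-n)}(\alpha)$ is nonetheless right.
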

\begin{proof}
Let $D$ denote the degree of $f$. Since $f$ is defined over $K$, the 
canonical height is invariant under $\Gal(\overline K / K)$, and in 
particular under $\Gal(L / K)$; cf.~\cite[Theorems~3.6 and~3.20]{MR2316407}. 
By using the hypothesis together with property (i.) of the canonical 
height, we get
\[
    0 
    = \hat h_f\left(\varphi^{m - n} \left(f^{dn}(\alpha)\right)\right)
    - \hat h_f(f^{dm}(\alpha))
    = (D^{dn} - D^{dm}) \hat h_f(\alpha).
\]
As $n < m$, it follows that $\hat h_f(\alpha) = 0$. This is 
equivalent to $\alpha$ being preperiodic by the Dynamical Kronecker 
Theorem.
\end{proof}

\subsection{Ramified case}

In the following proofs, we will often reduce to the setting of 
$p$-adic local fields, which are finite extensions of $\QQ_p$. If 
$F / E$ is an extension of local fields, then there exists a unique 
extension $v_F$ of the valuation $v_E$ to $F$~\cite[Theorem 4.8]{MR1697859}. 
The \textit{ramification index} of~$F / E$ is defined as
the index $e(F / E) = [v_F(F^*) \colon v_E(E^*)]$. It follows from 
the definition that the ramification index is multiplicative, that 
is, if $H$ is an intermediate field of $F / E$, then $e(F / E) = e
(F / H) e(H / E)$. 
We say that $F / E$ is \textit{unramified} if $e(F / E) = 1$, 
\textit{ramified} if $e(F / E) \ge 2$ and \textit{totally ramified} 
if $e(F / E) = [F \colon E]$.
In some cases, there are explicit formulas to compute the 
ramification index. For example, let $\zeta$ be a root of unity and 
write its order as $2^e m$ with $m$ odd. If $e \ge 1$, then 
$e(\QQ_2(\zeta) / \QQ_2) = 2^{e - 1}$, cf.~\cite[Proposition~II.7.13]{MR1697859}.

Given a number field $K$ and a prime ideal $\p$ of $K$, we write 
$v_\p$ for the $\p$-adic valuation normalized such that $v_\p(p) = 1$ 
for the integer prime $p$ below $\p$. The completion $K_\p$ of $K$ at 
$v_\p$ is a local number field. We denote the extension of $v_\p$ to 
$K_\p$ by $\hat v_\p$ and the valuation ring of $K_\p$ by $O_\p = \{z 
\in K_\p \colon \hat v_\p(z) \ge 0\}$.

\begin{prop}\label{prop:local-fields}
Let $L / K$ be an extension of number fields. Let $\p$ be a prime 
ideal of $K$ and let $\q$ be a prime ideal of $L$ lying above $\p$. 
\begin{itemize}
    \item[(i.)] $L_\q = L K_\p$
    \item[(ii.)] There exists $\alpha \in O_\q$ such that $O_\q = 
    O_\p[\alpha]$.     If the extension is totally ramified, then one 
    can take $\alpha$ to be a uniformizer for $L_\q$.
    \item[(iii.)] $e(L_\q / K_\p) = e(\q | \p)$.
\end{itemize}
\end{prop}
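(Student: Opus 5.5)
These are standard facts from algebraic number theory. I would prove them by combining density of $L$ in $L_\q$ with the structure theory of complete discretely valued fields, citing Neukirch \cite{MR1697859} for the local ingredients.

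For (i.), view $K_\p$ as the closure of $K$ inside $L_\q$, so the compositum $LK_\p$ sits inside $L_\q$. Since $L/K$ is finite, $L = K(\theta)$ for some $\theta$, and hence $LK_\p = K_\p(\theta)$. This is a finite extension of the complete field $K_\p$, so it is complete with respect to the unique extension of the valuation. It is therefore a closed subfield of $L_\q$ containing $L$. Because $L$ is dense in $L_\q$, we get $LK_\p = L_\q$.

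For (ii.), use that $L_\q/K_\p$ is a finite extension of local fields with finite residue fields, so the residue extension is separable.
\begin{itemize}
\item Choose $\bar\theta$ generating the residue extension, and lift it to a monic polynomial $g$ over $O_\p$ and a root approximation $\theta \in O_\q$.
\item Let $\Pi$ be a uniformizer of $L_\q$. Either $\theta$ or $\theta+\Pi$ satisfies $v_\q(g(\cdot)) = v_\q(\Pi)$, since $g'(\theta)$ is a unit.
\item The resulting $\alpha$ generates $O_\q$ over $O_\p$: the products $\alpha^i g(\alpha)^j$, with $0\le i<f$ and $0\le j<e$, form an integral basis by the usual valuation argument.
\end{itemize}
This is precisely \cite[Ch.~II, Proposition~6.8 / Lemma~10.4]{MR1697859}, and I would cite it rather than redo it. In the totally ramified case the residue extension is trivial, so the construction needs no $\theta$. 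Directly, take $\alpha = \Pi$: its minimal polynomial is Eisenstein, and the elements $\Pi^i$ for $0 \le i < e$ have pairwise distinct valuations modulo $e\,v(K_\p^*)$. Hence $\sum a_i \Pi^i$ has valuation $\min_i v(a_i\Pi^i)$, and $O_\q = \bigoplus O_\p \Pi^i$.

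For (iii.), the value group of $L$ under $v_\q$ equals that of $L_\q$, since $L$ is dense and the valuation is discrete. Similarly $v_\p(K^*) = \hat v_\p(K_\p^*)$. It then suffices to check that $[v_\q(L^*):v_\p(K^*)]$ equals the exponent of $\q$ in $\p O_L$. This holds because a uniformizer $\pi$ of $\p$ satisfies $v_\q(\pi) = e(\q|\p)\,v_\q(\Pi)$.

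The only nontrivial step is (ii.) in general. I expect the main care point to be the choice $\theta$ versus $\theta+\Pi$ to get a uniformizer $g(\alpha)$. Since the general case is textbook, I would simply cite it; only the totally ramified case, which is what is used later, is treated explicitly.
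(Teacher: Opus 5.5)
Your proposal is correct and follows the paper's route. The paper simply cites Neukirch for (i.) and (ii.), where you additionally spell out the density/completeness argument and the standard monogenicity construction, and for (iii.) it uses exactly your comparison $v_\q(L^*) = \hat v_\q(L_\q^*)$ and $v_\p(K^*) = \hat v_\p(K_\p^*)$. Your uniformizer computation $v_\q(\pi) = e(\q|\p)\,v_\q(\Pi)$ just makes explicit the final index equality $[v_\q(L^*) : v_\p(K^*)] = e(\q|\p)$, which the paper states without comment.
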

\begin{proof}
See~\cite[Ch. II.8]{MR1697859} for point (i.) and see~\cite[Lemma II.10.4]{MR1697859}
and~\cite[Proposition~2.53]{MR3838349} for point (ii.). As for point 
(iii.), it is a fact that $v_\p(K^*) = \hat v_\p(K^*_\p)$, 
see~\cite[Ch. II.4]{MR1697859}. In other words, $\hat v_\q$ extends 
$v_\q$ with index $1$. Thus
\[
    e(L_\q / K_\p)
    = [\hat v_\q(L^*_\q) \colon \hat v_\p(K^*_\p)]
    = [v_\q(L^*) \colon v_\p(K^*)]
    = e(\q | \p).\qedhere
\]
\end{proof}

\begin{prop}\label{prop:composition-of-unramified}
Let $F_1 / E$ and $F_2 / E$ be two extensions of local fields inside 
an algebraic closure $\overline E / E$. If $F_1 / E$ is unramified, 
then $F_1 F_2 / F_2$ is unramified.
\end{prop}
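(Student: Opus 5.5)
We use the standard description of unramified extensions of local fields: they are generated by roots of unity of order prime to the residue characteristic. Let $p$ be the residue characteristic of $E$. Since $F_1/E$ is finite and unramified, its residue field extension $k_{F_1}/k_E$ has degree $f = [F_1 \colon E]$. A generator of $k_{F_1}^\times$ is a primitive root of unity of order $N = |k_{F_1}| - 1$, which is prime to $p$. Hensel's lemma lifts it to a primitive $N$th root of unity $\zeta \in F_1$, because $T^N - 1$ is separable modulo the maximal ideal. The field $E(\zeta)$ then has residue field containing $k_{F_1}$, so $[E(\zeta) \colon E] \ge f = [F_1 \colon E]$. Hence $F_1 = E(\zeta)$, cf.~\cite[Ch.~II.7]{MR1697859}.

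Consequently $F_1F_2 = F_2(\zeta)$, and it suffices to show that adjoining to a local field $F_2$ a root of unity $\zeta$ of order $N$ prime to $p$ gives an unramified extension. Let $g \in O_{F_2}[T]$ be the minimal polynomial of $\zeta$ over $F_2$. It is monic with integral coefficients because $\zeta$ is integral, and it divides $T^N - 1$. The reduction $\bar g$ divides $T^N - 1$ in $k_{F_2}[T]$, which is separable since $N$ is invertible in $k_{F_2}$, so $\bar g$ is separable. Moreover, $\bar g$ is irreducible: if $\bar g$ had a factorization into coprime factors, Hensel's lemma would lift it to a factorization of $g$ over $O_{F_2}$, contradicting irreducibility. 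Then $O_{F_2}[\zeta]$ has residue field $k_{F_2}[T]/(\bar g)$ of degree $\deg g = [F_2(\zeta) \colon F_2]$. The residue degree of $F_2(\zeta)/F_2$ therefore equals the full degree. By the fundamental identity $ef = n$ for extensions of local fields, the ramification index is $1$, so $F_1F_2/F_2$ is unramified.

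The only delicate point is the first step, that an unramified extension is generated by a root of unity of order prime to $p$. This is textbook material, and the simplest course is to cite it directly from~\cite[Proposition~II.7.12]{MR1697859}. Alternatively, one can cite the general statement that unramified extensions are stable under base change. The Hensel irreducibility argument in the second step is routine.
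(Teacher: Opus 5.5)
Your proof is correct. The paper gives no argument of its own and simply cites \cite[Proposition~II.7.2]{MR1697859}, whose proof is essentially your second step: the minimal polynomial over $F_2$ of a generator of $F_1/E$ has separable reduction, Hensel's lemma makes that reduction irreducible, so the residue degree of $F_1F_2/F_2$ equals its degree and $e = 1$. The only difference is that the cited argument uses an arbitrary integral lift of a generator of $k_{F_1}/k_E$ instead of your root of unity $\zeta$ of order prime to $p$. Your first step is therefore a valid but unnecessary specialization that relies on the residue fields being finite, which holds here since the paper's local fields are finite extensions of $\QQ_p$.
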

\begin{proof}
See~\cite[Proposition II.7.2]{MR1697859}.
\end{proof}

From now on in this subsection, we assume that $K$ is a Galois number 
field unramified above $2$, that $\zeta$ is a root of unity, and that 
$f(T) = T^2 + c \in O_K[T]$ is PCF.

\begin{lemma}\label{lemma:totally-ramified-1}
If $2$ ramifies in $K(\zeta)$, then the extension $K(\zeta) / K(\zeta^2)$ 
is quadratic. Moreover, it is totally ramified at all prime ideals of 
$K(\zeta^2)$ lying above $2$.
\end{lemma}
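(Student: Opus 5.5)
Write the order of $\zeta$ as $2^e m$ with $m$ odd. The plan is to reduce everything to the local picture at $2$ and then compute ramification indices.

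\emph{Step 1: $e \ge 2$.} Since $K$ is unramified above $2$ and $K(\zeta_m)/K$ is unramified above $2$, the hypothesis that $2$ ramifies in $K(\zeta)$ forces $2^e \nmid 2$. Indeed, if $e \le 1$ then $K(\zeta) = K(\zeta_m)$ because $-1 \in K$, and this is unramified at $2$. So $e \ge 2$. Then $\zeta^2$ has order $2^{e-1}m$, and $K(\zeta) = K(\zeta^2)(\zeta_{2^e})$. This gives $[K(\zeta)\colon K(\zeta^2)] \le 2$, because $\zeta_{2^e}$ is a root of $T^2 - \zeta_{2^{e-1}}$, where $\zeta_{2^{e-1}} \in K(\zeta^2)$.

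\emph{Step 2: total ramification at every prime above $2$.} This step gives both quadraticity and total ramification. Let $\q'$ be a prime of $K(\zeta^2)$ above $2$ and $\q$ a prime of $K(\zeta)$ above it, and write $\p$ for the prime of $K$ below them. By \Cref{prop:local-fields}(i.) we have $K(\zeta)_\q = K_\p\QQ_2(\zeta)$ and $K(\zeta^2)_{\q'} = K_\p\QQ_2(\zeta^2)$. Since $K_\p/\QQ_2$ is unramified, \Cref{prop:composition-of-unramified} shows that $K(\zeta)_\q / \QQ_2(\zeta)$ is unramified. By multiplicativity,
\[
e(K(\zeta)_\q/\QQ_2) = e(\QQ_2(\zeta)/\QQ_2) = 2^{e-1}.
\]
Likewise, $e(K(\zeta^2)_{\q'}/\QQ_2) = e(\QQ_2(\zeta^2)/\QQ_2)$. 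This equals $2^{e-2}$ when $e \ge 2$; the case $e = 2$ gives $2^0 = 1$, which is consistent with the formula's convention that $e(\QQ_2(\zeta_m)/\QQ_2) = 1$ for odd order. Multiplicativity then yields
\[
e(K(\zeta)_\q / K(\zeta^2)_{\q'}) = 2,
\]
and by \Cref{prop:local-fields}(iii.) this is $e(\q|\q') = 2$. Since the local degree is at most the global degree, which is at most $2$, we get $[K(\zeta)\colon K(\zeta^2)] = 2$. Moreover $e(\q|\q') = 2$ equals the full degree, so only one prime lies above $\q'$ and it is totally ramified.

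\emph{Main obstacle.} The delicate point is the compositum/unramified bookkeeping, namely showing that adjoining $\zeta$ to $K_\p$ does not change the ramification index relative to adjoining it to $\QQ_2$. This is exactly where the assumption that $K$ is unramified above $2$ enters, through \Cref{prop:composition-of-unramified}. I would handle it by applying multiplicativity of $e$ in the tower $\QQ_2 \subset \QQ_2(\zeta) \subset K_\p\QQ_2(\zeta)$ and comparing it with the tower through $K_\p$, and then treating the boundary case $e = 2$ separately, where $\zeta^2$ has odd $2$-part.
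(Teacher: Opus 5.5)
Your proof is correct. For the total ramification you use the same mechanism as the paper. Base change along the unramified extension $K_\p/\QQ_2$ (via \Cref{prop:composition-of-unramified}) leaves ramification indices unchanged, so $e(\q\mid\q')$ equals the ramification index of $\QQ_2(\zeta)/\QQ_2(\zeta^2)$, namely $2^{e-1}/2^{e-2}=2$.

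You differ from the paper on the quadraticity claim. The paper proves $[K(\zeta):K(\zeta^2)]=2$ by a separate global degree count. It shows $K(\zeta_m)\cap\QQ(\zeta_{2^e})=\QQ$ (unramified versus totally ramified at $2$) and compares degrees over $\QQ(\zeta_{2^e})$ and $\QQ(\zeta_{2^{e-1}})$. Only afterwards does it use that degree to turn ramification index $2$ into total ramification.

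You instead take the trivial bound $[K(\zeta):K(\zeta^2)]\le 2$, since $\zeta$ is a root of $T^2-\zeta^2$. The local computation $e(\q\mid\q')=2$ then forces equality. So both assertions come out of a single computation, and the linear-disjointness step is no longer needed. This also makes superfluous the paper's closing remark that the local extension is nontrivial ``because $\p$ ramifies''.

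What the paper's global count buys is an explicit linear disjointness of $K(\zeta_m)$ and $\QQ(\zeta_{2^e})$, the same kind of argument reused in \Cref{lemma:quartic-cyclic}. For this lemma alone, however, your argument is the more economical one.

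One small wording slip: in the case $e=2$, the element $\zeta^2$ has order $2m$, so it does not have ``odd $2$-part''. What you actually use is that $\QQ_2(\zeta^2)=\QQ_2(\zeta_m)$ is unramified, and your computation $2^{e-2}=2^0=1$ handles this correctly.
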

\begin{proof}
We first prove that $K(\zeta)/K(\zeta^2)$ is quadratic.
The prime $2$ ramifies in~$\QQ(\zeta)$. If it did not, then both $K / \QQ$ 
and $\QQ(\zeta)/\QQ$ would be unramified above $2$, and so the same 
would be true for $K(\zeta)$, a contradiction.
Therefore,~\cite[Corollary I.10.4]{MR1697859} implies that the order 
of $\zeta$ can be written as $2^e m$ with $e \ge 2$ and $m$ odd. It 
follows that there exist two roots of unity $\zeta_{2^e}$ and 
$\zeta_m$ of order $2^e$ and $m$ respectively such that $\QQ(\zeta) = 
\QQ(\zeta_{2^e})\QQ(\zeta_m)$. Since every intermediate field of 
$K(\zeta_m) / \QQ$ is unramified above $2$ while every intermediate 
field of $\QQ(\zeta_{2^e})/\QQ$ is totally ramified above $2$, we obtain 
$K(\zeta_m) \cap \QQ(\zeta_{2^e}) = K(\zeta_m) \cap \QQ(\zeta_{2^{e - 1}}) = \QQ$. 
Therefore $[K(\zeta) \colon \QQ(\zeta_{2^e})] = [K(\zeta^2) \colon \QQ
(\zeta_{2^{e - 1}})]$ and 
\[
    [K(\zeta) \colon \QQ(\zeta_{2^{e - 1}})]
    = [\QQ(\zeta_{2^e}) \colon \QQ(\zeta_{2^{e - 1}})] 
    \cdot
    [K(\zeta) \colon \QQ(\zeta_{2^e})]
    = 2 [K(\zeta^2) \colon \QQ(\zeta_{2^{e - 1}})].
\]
Hence
\begin{equation*}
    [K(\zeta) \colon K(\zeta^2)]
    = \frac{[K(\zeta) \colon \QQ(\zeta_{2^{e - 1}})]}{[K(\zeta^2) 
    \colon \QQ(\zeta_{2^{e - 1}})]}
    = 2.
\end{equation*}

Now we prove that $K(\zeta) / K(\zeta^2)$ is totally ramified at all 
prime ideals above $2$. 
By~\Cref{prop:local-fields}, it suffices to show that 
$K_\p(\zeta)/K_\p(\zeta^2)$ is totally ramified for all prime ideals 
$\p$ of $K$ above $2$.
The extension $\QQ_2(\zeta)/\QQ_2(\zeta^2)$ is quadratic and totally 
ramified. 
Applying \Cref{prop:composition-of-unramified} with $E = \QQ_2$, $F_1 = K_\p$ 
and $F_2 = \QQ_2(\zeta^2)$ implies that $K_\p(\zeta^2)/\QQ_2(\zeta^2) 
= K_\p \QQ_2(\zeta^2)/\QQ_2(\zeta^2)$ is also unramified. Hence, by 
multiplicativity of ramification indices,
\begin{align*}
    e(K_\p(\zeta) / K_\p(\zeta^2))
    &= \frac{e(K_\p(\zeta) / \QQ_2(\zeta^2))}{e(K_\p(\zeta^2) / \QQ_2(\zeta^2))}\\
    &= \frac{e(K_\p(\zeta) / \QQ_2(\zeta)) e(\QQ_2(\zeta) / \QQ_2(\zeta^2))}{e(K_\p(\zeta^2) / \QQ_2(\zeta^2))}\\
    &= \frac{1 \cdot 2}{1}
    = 2.
\end{align*}
Moreover, $K_\p(\zeta)/K_\p(\zeta^2)$ has degree at most $2$, and it 
is nontrivial because $\p$ ramifies in $K(\zeta)$. Therefore its 
degree is $2$, so its ramification index equals its degree. Thus 
$K_\p(\zeta) / K_\p(\zeta^2)$ is totally ramified.\qedhere
\end{proof}

\begin{prop}\label{prop:ramified-easy-case}
Suppose that $2$ ramifies in $K(\zeta)$. Let $\kappa$ be the 
nontrivial element in $\Gal(K(\zeta) / K(\zeta^2))$. Then 
$\kappa(\alpha) \equiv \alpha \mod 2O_{K(\zeta)}$ for all $\alpha \in 
O_{K(\zeta)}$. Moreover, $\kappa(f(\alpha)) \equiv f(\alpha) \mod 4O_
{K(\zeta)}$ for all $\alpha \in O_{K(\zeta)}$.
\end{prop}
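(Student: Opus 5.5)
The plan is to establish the first congruence locally at each prime above $2$ and then glue the local statements, using that $2O_{K(\zeta)}$ factors into powers of primes above $2$. The second congruence then follows formally from the first via \Cref{lemma:idiot-computation}.

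Write $L = K(\zeta)$ and $M = K(\zeta^2)$. Let $\q'$ be a prime of $M$ above $2$ and $\q$ the unique prime of $L$ above it; it is unique because, by \Cref{lemma:totally-ramified-1}, $L/M$ is quadratic and totally ramified at $\q'$. Since $\q$ is the only prime above $\q'$, the decomposition group is all of $\Gal(L/M)$, so $\kappa$ extends to the nontrivial automorphism of the completion $L_\q/M_{\q'}$. By \Cref{prop:local-fields}(ii) we may write $O_\q = O_{\q'}[\pi]$ with $\pi$ a uniformizer of $L_\q$. Every element of $O_\q$ is then $a + b\pi$ with $a, b \in O_{\q'}$, and $\kappa(a + b\pi) - (a + b\pi) = b(\kappa(\pi) - \pi)$. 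It therefore suffices to show $\hat v_\q(\kappa(\pi) - \pi) \ge \hat v_\q(2)$.

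For this we compute the different. Let $g$ be the minimal polynomial of $\pi$ over $M_{\q'}$; it is Eisenstein of degree $2$, $g(T) = T^2 + sT + t$, and $\kappa(\pi) - \pi = \pm g'(\pi)$ up to sign, with $g'(\pi) = 2\pi + s$. The key input is the size of the ramification. Since $\hat v_{\q'}(s) \ge$ the valuation of a uniformizer of $M_{\q'}$, and $\hat v_\q(2\pi)$ is strictly larger than $\hat v_\q(2)$, the bound needs $\hat v_\q(s) \ge \hat v_\q(2)$. That inequality does not follow from the Eisenstein shape alone, so I would instead use an explicit uniformizer. By \Cref{prop:composition-of-unramified} and the proof of \Cref{lemma:totally-ramified-1}, $L_\q = M_{\q'}(\zeta_{2^e})$ with $\zeta_{2^e}^2 \in M_{\q'}$, and $\kappa(\zeta_{2^e}) = -\zeta_{2^e}$. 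Since $O_\q$ is generated over $O_{\q'}$ by $\zeta_{2^e}$ (the rings of integers of cyclotomic extensions of unramified fields are monogenic via the root of unity, and the unramified part contributes nothing), take the generator to be $\zeta_{2^e}$ instead of $\pi$. Then $\kappa(\zeta_{2^e}) - \zeta_{2^e} = -2\zeta_{2^e}$ lies in $2O_\q$, giving $\kappa(x) \equiv x \bmod 2O_\q$ for all $x \in O_\q$. The main obstacle is justifying $O_\q = O_{\q'}[\zeta_{2^e}]$. I would argue via discriminants: $\QQ_2(\zeta_{2^e})$ has ring of integers $\ZZ_2[\zeta_{2^e}]$, and its compositum with the unramified field $K_\p(\zeta_m)$ has ring of integers generated by both rings. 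Alternatively, one can check directly that $\zeta_{2^e} - 1$ is a uniformizer of $L_\q$, since it already is one for $\QQ_2(\zeta_{2^e})$ and $L_\q/\QQ_2(\zeta_{2^e})$ is unramified, and then apply \Cref{prop:local-fields}(ii) with $\pi = \zeta_{2^e} - 1$, for which $\kappa(\pi) - \pi = -2\zeta_{2^e}$.

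Having $\kappa(x) - x \in \q^{\hat v_\q(2) e_\q}$ for every $\q$ above $2$, we glue using $2O_L = \prod_{\q \mid 2} \q^{e(\q|2)}$ (elements of $O_L$ embed into $O_\q$), which gives $\kappa(\alpha) \equiv \alpha \bmod 2O_L$. For the second claim, $\kappa(f(\alpha)) = \kappa(\alpha)^2 + c$ because $c \in K$ is fixed by $\kappa$. Applying \Cref{lemma:idiot-computation} with $x = \kappa(\alpha)$, $y = \alpha$, $z = 0$ gives $\kappa(\alpha)^2 \equiv \alpha^2 \bmod 4O_L$, hence $\kappa(f(\alpha)) \equiv f(\alpha) \bmod 4O_L$.
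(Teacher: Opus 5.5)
Your proof is correct and follows essentially the paper's route. You localize at each prime $\q'$ of $K(\zeta^2)$ above $2$ and use that $\zeta_{2^e}-1$ is a uniformizer of the totally ramified quadratic extension $K(\zeta)_\q/K(\zeta^2)_{\q'}$, so $O_\q=O_{\q'}[\zeta_{2^e}-1]$ and $\kappa(x)-x\in 2\zeta_{2^e}O_{\q'}\subseteq 2O_\q$; you then glue via $2O_{K(\zeta)}=\prod_{\q\mid 2}\q^{e(\q|2)}$ and square. The differences are cosmetic: you work with the uniformizer directly rather than rewriting $O_\q=O_{\q'}[\zeta]$, and you make explicit, via the decomposition group being all of $\Gal(K(\zeta)/K(\zeta^2))$, that $\kappa$ extends to the completion, which the paper leaves implicit. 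The Eisenstein/different detour is unnecessary, as you yourself observe.
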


\begin{proof}
The extension $K(\zeta) / K(\zeta^2)$ is quadratic by~\Cref{lemma:totally-ramified-1}. 
Let $\kappa$ be the generator of $\Gal(K(\zeta) / K(\zeta^2))$. We 
want to show that $\kappa(\alpha) \equiv \alpha \mod 2O_{K(\zeta)}$ 
for all $\alpha \in O_{K(\zeta)}$. First note that $\kappa(\zeta) = - \zeta$. 
Indeed, since ${\kappa(\zeta)}^2 = \kappa(\zeta^2) = \zeta^2$, we 
have $\kappa(\zeta) = \pm \zeta$. As $\zeta \notin K(\zeta^2)$, we 
conclude that~$\kappa(\zeta) = - \zeta$.
Let $\p$ be a prime ideal in $K$ above $2$. Let $\q$ be any prime 
ideal of $K(\zeta)$ lying above $\p$ and denote $\q \cap K(\zeta^2)$ 
by $\q'$. We work over the local cyclotomic extension 
$K_{\p}(\zeta) / K_{\p}(\zeta^2)$. We claim that $O_\q = O_{\q'}[\zeta]$. 
Indeed, write the order of $\zeta$ as $2^e m$ with $e \ge 2$ and $m$ 
odd, and decompose $\zeta = \zeta_{2^e} \zeta_m$ where $\zeta_{2^e}$ 
has order $2^e$ and $\zeta_m$ has order $m$. Since $\zeta_m$ has odd 
order, $\zeta_m \in K_\p(\zeta^2)$, and clearly $\zeta_{2^e}^2 \in K_\p(\zeta^2)$. 
Hence, we have $K_\p(\zeta^2, \zeta_{2^e}) = K_\p(\zeta^2, \zeta)$. 
Now $1 - \zeta_{2^e}$ is a uniformizer of $K_\p(\zeta)$. 
Thus,~\Cref{prop:local-fields} yields $O_\q = O_{\q'}[1 - \zeta_{2^e}] 
= O_{\q'}[\zeta_{2^e}]$. Since $\zeta_m$ is invertible in $O_{\q'}$ 
and $\zeta = \zeta_{2^e} \zeta_m$, we conclude that $O_\q = 
O_{\q'}[\zeta_{2^e}] = O_{\q'}[\zeta]$, proving the claim.
Since $\alpha \in O_{K(\zeta)} \subseteq O_\q$, there exists 
$\beta_1, \beta_2 \in O_{\q'}$ such that $\alpha = \beta_1 + \beta_2 
\zeta$. Therefore,
\begin{equation}\label{eqn:2-adic-value}
	\kappa(\alpha) - \alpha = -2 \beta_2 \zeta \in 2O_\q.
\end{equation}
Since~\eqref{eqn:2-adic-value} holds for all ideals $\q$ lying above 
$2$, we conclude that 
\[
    \kappa(\alpha) - \alpha 
    \in O_{K(\zeta)} \cap \bigcap_{\q | 2} 2O_{\q} 
    = 2O_{K(\zeta)}.
\]

As for the second part of the statement, write $\kappa(\alpha) = 
\alpha + 2 \beta$ for some $\beta \in O_{K(\zeta)}$. Hence
\[
	\kappa(f(\alpha)) 
	= {\kappa(\alpha)}^2 + c
	= \alpha^2 + c + 4(\beta^2 + \alpha\beta)
	= f(\alpha) + 4(\beta^2 + \alpha\beta)
\]
and the proof follows.
\end{proof}

\begin{prop}\label{prop:ramified-easy-general-case}
Suppose that $2$ ramifies in $K(\zeta)$. Let $\kappa \in 
\Gal(K(\zeta) / K)$ be as in~\Cref{prop:ramified-easy-case}. 
For each $\alpha \in K(\zeta)$ there exists $\beta \in O_{K(\zeta)}$ 
such that
\begin{itemize}
\item[(i.)] $|\beta|_v = {\max\{1, |\alpha|_v\}}^{-1}$ for all places $v$ above $2$;
\item[(ii.)] $\alpha \beta \in O_{K(\zeta)}$ and $\beta^2 f(\alpha) \in O_{K(\zeta)}$;
\item[(iii.)] $\kappa(\beta^2 f(\alpha)) \equiv \beta^2 f(\alpha) \mod 4O_{K(\zeta)}$.
\end{itemize}
\end{prop}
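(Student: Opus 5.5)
The plan is to take $\beta$ from \Cref{lemma:ad-denominator}, applied with $L = K(\zeta)$ and $p = 2$. This gives $\beta \in O_{K(\zeta)}$ with $\alpha\beta \in O_{K(\zeta)}$ and $|\beta|_v = {\max\{1, |\alpha|_v\}}^{-1}$ for all places $v$ above $2$. That settles (i.) and the first half of (ii.). For the second half of (ii.), write
\[
    \beta^2 f(\alpha) = (\alpha\beta)^2 + \beta^2 c.
\]
This lies in $O_{K(\zeta)}$ because $c \in O_K$ by \Cref{lemma:integrality-of-c}; in the present setting $c \in O_K$ is in any case assumed.

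For (iii.), apply the first part of \Cref{prop:ramified-easy-case} to the two algebraic integers $\alpha\beta$ and $\beta$. This gives
\[
    \kappa(\alpha\beta) = \alpha\beta + 2\gamma_1, \qquad \kappa(\beta) = \beta + 2\gamma_2
\]
with $\gamma_1, \gamma_2 \in O_{K(\zeta)}$. Since $\kappa$ fixes $c \in K$, we get
\[
    \kappa(\beta^2 f(\alpha)) = \kappa(\alpha\beta)^2 + \kappa(\beta)^2 c.
\]
Expanding the squares,
\[
    \kappa(\alpha\beta)^2 = (\alpha\beta)^2 + 4(\gamma_1^2 + \alpha\beta\gamma_1) \equiv (\alpha\beta)^2 \mod 4O_{K(\zeta)},
\]
and likewise $\kappa(\beta)^2 \equiv \beta^2 \mod 4O_{K(\zeta)}$. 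The first is exactly \Cref{lemma:idiot-computation} with $z = 0$, and the second uses the same computation with $\beta$ in place of $\alpha\beta$. Multiplying the second congruence by $c \in O_K$ and adding the two yields $\kappa(\beta^2 f(\alpha)) \equiv (\alpha\beta)^2 + \beta^2 c = \beta^2 f(\alpha) \mod 4O_{K(\zeta)}$, which is (iii.).

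I do not expect a genuine obstacle. The only point to check is that the mod-$2$ congruence of \Cref{prop:ramified-easy-case} is stated for all of $O_{K(\zeta)}$. Applying it separately to $\beta$ and to $\alpha\beta$, which are both integral, therefore avoids any issue with the possibly nonintegral $\alpha$. The squaring step then upgrades the mod-$2$ congruences to mod-$4$ ones.
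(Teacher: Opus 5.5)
Your proposal is correct and matches the paper's proof essentially step for step. You take $\beta$ from \Cref{lemma:ad-denominator}, and you get (iii.) by upgrading the mod-$2$ congruence of \Cref{prop:ramified-easy-case} to $\kappa(\gamma^2) \equiv \gamma^2 \bmod 4O_{K(\zeta)}$ for integral $\gamma$, applied to $\gamma = \alpha\beta$ and $\gamma = \beta$.
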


\begin{proof}
\Cref{lemma:ad-denominator} applied with $L = K(\zeta)$ and $p = 2$ 
implies that for each $\alpha \in K(\zeta)$ there exists $\beta \in O_{K(\zeta)}$ 
such that $\alpha \beta \in O_{K(\zeta)}$ and $|\beta|_v = \max\{1, 
|\alpha|_v\}^{-1}$ for all places $v$ above $2$. Point (i.) follows.
As for (ii.), we first show that $\beta^2 f(\alpha) \in O_{K(\zeta)}$. 
Indeed
\[
    \beta^2 f(\alpha) = \beta^2 \alpha^2 + \beta^2 c
\]
and the proof follows since $\beta, \beta\alpha \in O_{K(\zeta)}$ and 
$c \in O_K$.
To prove (iii.), we first claim that for all $\gamma \in O_{K(\zeta)}$ 
we have $\kappa(\gamma^2) \equiv \gamma^2 \mod 4O_{K(\zeta)}$. 
Indeed, by~\Cref{prop:ramified-easy-case} there exists $\tilde{\gamma} 
\in O_{K(\zeta)}$ such that $\kappa(\gamma) = \gamma + 2\tilde{\gamma}$. 
So 
\[
    \kappa(\gamma^2) = \gamma^2 + 4\tilde{\gamma}(\gamma + \tilde{\gamma})
\]
and the claim follows. We then conclude that 
\[
    \kappa(\beta^2f(\alpha))
    = \kappa(\beta^2\alpha^2) + \kappa(\beta^2)c
    \equiv \beta^2\alpha^2 + \beta^2c 
    \equiv \beta^2 f(\alpha) \mod 4O_{K(\zeta)}.\qedhere
\]
\end{proof}

\begin{remark}
If $\alpha$ is an algebraic integer, one can take $\beta = 1$. It 
follows that
\begin{equation}\label{eqn:eqn-ram-alg-int}
    \kappa(f(\alpha)) \equiv f(\alpha) \mod 4O_{K(\zeta)}.
\end{equation}
\end{remark}

In the current setting, the analogue of \Cref{lemma:unramified-equality-case} 
is false. Indeed, for all elements $\alpha$ of the form $\zeta \beta$ with 
$\beta \in K(\zeta^2)$ one has $\kappa(f(\alpha)) = f(\alpha)$ and 
$f(\alpha) \in K(\zeta^2)$. This is the main difference with the 
unramified case. However, if $2$ ramifies in $K(\zeta^4)$, we are 
going to prove two facts. The first is that $f(\alpha) \notin 
K(\zeta^4)$. The second is that if $\alpha$ is $\q$-adically integral 
for some prime ideal $\q$ of $K(\zeta)$ above $2$, then $f^2(\alpha) \notin 
K(\zeta^4)$ whenever $f(\alpha) \in K(\zeta^2)$ and $f \notin \{T^2, 
T^2 - 2\}$.

\begin{lemma}\label{lemma:quartic-cyclic}
If $2$ ramifies in $K(\zeta^4)$, then the extension 
$K(\zeta)/K(\zeta^4)$ is quartic and cyclic.
\end{lemma}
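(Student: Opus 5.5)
The plan is to mimic the proof of \Cref{lemma:totally-ramified-1}, this time tracking the $2$-power part of $\zeta$ more carefully. We write the order of $\zeta$ as $2^e m$ with $m$ odd and decompose $\zeta = \zeta_{2^e}\zeta_m$. Then $\zeta^4 = \zeta_{2^e}^4 \zeta_m^4$. Since $\zeta_m^4$ generates the same group as $\zeta_m$, we get
\[
K(\zeta^4) = K(\zeta_m)\QQ(\zeta_{2^{e-2}}), \qquad K(\zeta) = K(\zeta_m)\QQ(\zeta_{2^e}).
\]
The hypothesis that $2$ ramifies in $K(\zeta^4)$ has a concrete consequence. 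Both $K$ and $\QQ(\zeta_m)$ are unramified above $2$, so $\QQ(\zeta_{2^{e-2}})$ must be ramified above $2$. By \cite[Corollary I.10.4]{MR1697859} this forces $e - 2 \ge 2$, i.e.\ $e \ge 4$.

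To compute the degree, we use the same disjointness argument as in \Cref{lemma:totally-ramified-1}. Every subfield of $K(\zeta_m)$ is unramified above $2$, while every subfield of $\QQ(\zeta_{2^e})$ is totally ramified above $2$, so $K(\zeta_m) \cap \QQ(\zeta_{2^e}) = \QQ$. Since $\QQ(\zeta_{2^e})/\QQ$ is Galois, linear disjointness then gives
\[
\Gal(K(\zeta)/K(\zeta_m)) \cong \Gal(\QQ(\zeta_{2^e})/\QQ) \cong (\ZZ/2^e\ZZ)^\times,
\]
via restriction. Under this isomorphism, $\Gal(K(\zeta)/K(\zeta^4))$ corresponds to $\Gal(\QQ(\zeta_{2^e})/\QQ(\zeta_{2^{e-2}}))$. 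Its order is
\[
\frac{\varphi(2^e)}{\varphi(2^{e-2})} = \frac{2^{e-1}}{2^{e-3}} = 4,
\]
where we use $e - 2 \ge 1$ so that the Euler totient formula applies. This proves that the extension is quartic.

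For cyclicity, we identify this subgroup inside $(\ZZ/2^e\ZZ)^\times$. It is the kernel of reduction modulo $2^{e-2}$, namely $\{a \equiv 1 \bmod 2^{e-2}\}$, which equals $\{1 + 2^{e-2}j : j = 0,1,2,3\}$. We claim $5^{2^{e-4}}$ generates it. Since $e \ge 4$, the classical fact $5^{2^{k}} \equiv 1 + 2^{k+2} \bmod 2^{k+3}$ applies with $k = e-4$, so $5^{2^{e-4}}$ lies in the kernel and is $\not\equiv 1 \bmod 2^{e-1}$. Its order is therefore $4$. Alternatively, one can note that $1 + 2^{e-2}$ has order $4$ modulo $2^e$ when $e \ge 4$, because its square is $1 + 2^{e-1} + 2^{2e-4}$, and $2e - 4 \ge e$ makes this $\equiv 1 + 2^{e-1} \not\equiv 1$.

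The only delicate step is the first one: we must check that the hypothesis yields $e \ge 4$ and not merely $e \ge 3$. This is exactly what guarantees that the subgroup is cyclic rather than $(\ZZ/2\ZZ)^2$. For $e = 3$ the kernel would be all of $(\ZZ/8\ZZ)^\times$, which is not cyclic, but in that case $\zeta^4$ has $2$-part of order $2$ and $K(\zeta^4) = K(\zeta_m)$ is unramified above $2$, which is excluded by hypothesis.
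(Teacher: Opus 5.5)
Your proof is correct and follows essentially the paper's route: show that $16$ divides the order of $\zeta$, use an ``unramified versus totally ramified above $2$'' argument to get linear disjointness and pass to a cyclotomic Galois group, compute the degree with the totient, and exhibit an element of order $4$. Your $1+2^{e-2}$ acts as $\zeta_{2^e}\mapsto \zeta_{2^e}^{2^{e-2}}\zeta_{2^e}$, i.e.\ multiplication by a primitive fourth root of unity, which is the paper's generator $\zeta\mapsto i\zeta$. The only difference is that you split off the odd part and prove disjointness of $K(\zeta_m)$ and $\QQ(\zeta_{2^e})$ over $\QQ$, exactly as in \Cref{lemma:totally-ramified-1}, whereas the paper proves $\QQ(\zeta)\cap K(\zeta^4)=\QQ(\zeta^4)$ directly over $\QQ(\zeta^4)$ using \Cref{prop:composition-of-unramified}.
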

\begin{proof}
We first show that the order of $\zeta$ is a multiple of $16$. The 
prime $2$ is ramified in $\QQ(\zeta^4)$ since it is unramified in $K$ 
and ramified in $K(\zeta^4)$ by hypothesis. This implies that the 
order of $\zeta^4$ is divisible by $4$. Hence, the order of $\zeta$ 
is divisible by $16$.
Denote the order of $\zeta$ by $n$ and write $n = 2^a b$ for $a \ge 4$ 
and $b$ odd. Note that $n/4$ is the order of $\zeta^4$. Let $\phi$ be 
the Euler totient. Then,
\[
    [\QQ(\zeta) \colon \QQ(\zeta^4)]
    = \frac{[\QQ(\zeta) \colon \QQ]}{[\QQ(\zeta^4) \colon \QQ]}
    = \frac{\phi(n)}{\phi(n / 4)}
    = \frac{2^{a - 1}\phi(b)}{2^{a - 3}\phi(b)}
    = 4.
\]
Since $i \in \QQ(\zeta^4)$, the automorphism defined by 
$\sigma(\zeta) = i \zeta$ lies in $\Gal(\QQ(\zeta)/\QQ(\zeta^4))$. 
Since $\sigma$ has order $4$ and the extension has degree $4$, it 
generates $\Gal(\QQ(\zeta)/\QQ(\zeta^4))$. Hence $\QQ(\zeta)/\QQ(\zeta^4)$ 
is quartic and cyclic.

Next, we show that $\QQ(\zeta) \cap K(\zeta^4) = \QQ(\zeta^4)$. On 
the one hand, the extension $\QQ(\zeta)/\QQ(\zeta^4)$ is totally 
ramified at all prime ideals above $2$. Thus, the extension 
$\QQ(\zeta) \cap K(\zeta^4)/\QQ(\zeta^4)$ is totally ramified at all 
primes above $2$. Consequently $[\QQ(\zeta) \cap K(\zeta^4) \colon 
\QQ(\zeta^4)] = e(\QQ(\zeta) \cap K(\zeta^4)/\QQ(\zeta^4))$. On the 
other hand, since $K$ is unramified above $2$, an application 
of~\Cref{prop:composition-of-unramified} with $E = \QQ_2$, 
$F_1 = K_\p$ and $F_2 = \QQ_2(\zeta^4)$ implies that the extension 
$K(\zeta^4)/\QQ(\zeta^4)$ is unramified at all prime ideals above $2$.
Hence $e(\QQ(\zeta) \cap K(\zeta^4)/\QQ(\zeta^4)) = 1$. The second 
claim follows.

It follows from the second claim that $\QQ(\zeta)$ and $K(\zeta^4)$ 
are linearly disjoint over $\QQ(\zeta^4)$. Then, restriction to 
$\QQ(\zeta)$ induces an isomorphism between $\Gal(K(\zeta)/K(\zeta^4))$ 
and $\Gal(\QQ(\zeta)/\QQ(\zeta^4))$. Since the latter is cyclic of 
order 4, the proof of the lemma follows. 
\end{proof}

Assume that $2$ ramifies in $K(\zeta^4)$. Consider a generator 
$\rho \in \Gal(K(\zeta) / K(\zeta^4))$. Since $\rho$ fixes $\zeta^4$ 
but not $\zeta^2$, we deduce that $\rho(\zeta^2) = -\zeta^2$. 
Similarly, $\rho^2(\zeta) = - \zeta$. Note that $\rho^2 = \kappa$.

\begin{lemma}\label{lemma:first-iterate-degeneration}
Suppose that $2$ ramifies in $K(\zeta^4)$. If $\alpha \in K(\zeta) 
\setminus K(\zeta^2)$ and $f(\alpha) \in K(\zeta^2)$, then $f(\alpha) 
\notin K(\zeta^4)$. 
\end{lemma}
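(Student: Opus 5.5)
The plan is a purely Galois-theoretic argument that uses only the automorphisms $\kappa$ and $\rho$ introduced above. No $2$-adic analysis is needed; the hypothesis that $2$ ramifies in $K(\zeta^4)$ enters only through \Cref{lemma:quartic-cyclic}. From it we use three things: a generator $\rho$ of $\Gal(K(\zeta)/K(\zeta^4))$ exists, it satisfies $\rho^2=\kappa$ and $\rho(\zeta^2)=-\zeta^2$, and $i\in K(\zeta^4)$. The last holds because, as in the proof of \Cref{lemma:quartic-cyclic}, the order of $\zeta^4$ is divisible by $4$.

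\emph{Step 1: rewrite $\alpha$ as $\zeta\beta$.} Since $c\in K$ and $f(\alpha)\in K(\zeta^2)$, we have $\alpha^2=f(\alpha)-c\in K(\zeta^2)$. Hence $\kappa(\alpha)^2=\kappa(\alpha^2)=\alpha^2$, so $\kappa(\alpha)=\pm\alpha$. Because $\alpha\notin K(\zeta^2)$ is not fixed by $\kappa$, we get $\kappa(\alpha)=-\alpha$. Set $\beta=\alpha/\zeta$. Since $\kappa(\zeta)=-\zeta$ (shown in the proof of \Cref{prop:ramified-easy-case}), $\beta$ is fixed by $\kappa$, so $\beta\in K(\zeta^2)$. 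Moreover $\beta\neq0$, since $\alpha\neq 0$. Thus $\alpha=\zeta\beta$ and $\alpha^2=\zeta^2\beta^2$.

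\emph{Step 2: suppose $f(\alpha)\in K(\zeta^4)$.} Then $\alpha^2\in K(\zeta^4)$, so $\rho$ fixes $\alpha^2$. Using $\rho(\zeta^2)=-\zeta^2$, this gives
\[
\zeta^2\beta^2=\rho(\zeta^2\beta^2)=-\zeta^2\rho(\beta)^2 ,
\]
so $\rho(\beta)^2=-\beta^2$ and therefore $\rho(\beta)=\epsilon i\beta$ with $\epsilon\in\{\pm1\}$.

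\emph{Step 3: derive a contradiction.} Since $i\in K(\zeta^4)$ is fixed by $\rho$, applying $\rho$ again gives
\[
\rho^2(\beta)=\epsilon i\,\rho(\beta)=(\epsilon i)^2\beta=-\beta .
\]
On the other hand $\rho^2=\kappa$ fixes $\beta\in K(\zeta^2)$, so $\rho^2(\beta)=\beta$. Hence $\beta=-\beta$, which forces $\beta=0$ in characteristic zero. This contradicts Step 1, so $f(\alpha)\notin K(\zeta^4)$.

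The only point needing care is the bookkeeping of the two automorphisms: that $\rho$ fixes $i$ and that $\rho^2=\kappa$, both of which were established before the statement. I do not expect a genuine obstacle.
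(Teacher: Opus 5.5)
Your proof is correct, and it relies on the same mechanism as the paper's proof: the sign ambiguity of square roots combined with $\rho^2=\kappa$.

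The difference is where you apply it. The paper works directly with $\alpha$. If $\rho(f(\alpha))=f(\alpha)$, then $\rho(\alpha)^2=\alpha^2$, so $\rho(\alpha)=\pm\alpha$. In either case $\kappa(\alpha)=\rho^2(\alpha)=\alpha$, which contradicts $\alpha\notin K(\zeta^2)$.

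Your detour through $\beta=\alpha/\zeta$ performs the same computation shifted by the factor $\zeta$. Since $\rho(\zeta)^2=\rho(\zeta^2)=-\zeta^2$, we have $\rho(\zeta)=\pm i\zeta$. Hence your $\rho(\beta)=\pm i\beta$ is equivalent to $\rho(\alpha)=\pm\alpha$. The detour forces you to import $\rho(\zeta^2)=-\zeta^2$ and $i\in K(\zeta^4)$, which the direct argument never needs.

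Note also that neither argument really uses the hypothesis $f(\alpha)\in K(\zeta^2)$. It already follows from the assumption $f(\alpha)\in K(\zeta^4)$ that is being refuted, because $K(\zeta^4)\subseteq K(\zeta^2)$.
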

\begin{proof}
The group $\Gal(K(\zeta) / K(\zeta^4))$ is cyclic of order $4$ by 
\Cref{lemma:quartic-cyclic}. Suppose for contradiction that 
$\rho(f(\alpha)) = f(\alpha)$. Then $\rho(\alpha)^2 + c = \alpha^2 + c$, 
from which $\rho(\alpha) = \pm \alpha$. Hence $\kappa(\alpha) = 
\rho^2(\alpha) = \alpha$, a contradiction. 
\end{proof}

We now briefly investigate the $2$-adic valuation of $c$, the 
constant coefficient of the dynamical system $f$. 
Let $\P$ be a prime ideal of $\QQ(c)$ above $2$. Define $v_\P$ to be 
the $\P$-adic valuation normalized such that $v_\P(2) = 1$.

\begin{prop}\label{prop:2-adic-valuation-of-c} 
Let $f(T) = T^2 + c \in \overline \QQ [T]$ be PCF. If $c \neq 0$, 
then either $c \in O_{\QQ(c)}^\times$ or $v_\P(c) = 1/[\QQ(c) \colon \QQ]$.
\end{prop}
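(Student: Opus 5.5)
The plan is to work $2$-adically with the explicit polynomial that $c$ satisfies, and to show that any irreducible factor having a root of positive $2$-adic valuation is Eisenstein at $2$. By \Cref{lemma:integrality-of-c}, $c$ is an algebraic integer. So $v_\P(c)\ge 0$, and we only need to treat the case $v_\P(c)>0$ for some $\P\mid 2$. Choose $n<m$ with $f^m(0)=f^n(0)$, and write $F^k(0,C)=CG_k(C)$ as in \Cref{lemma:F-properties}. From \eqref{eqn:Fk} one reads off the recursion $G_k=CG_{k-1}^2+1$. Setting $G_0=0$ makes it valid for all $k\ge1$, and it gives $G_k(0)=1$ and $G_k\equiv 1+C+\dots+C^{2^{k-1}-1}\bmod 2$.

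\emph{Step 1: factor the defining polynomial.} Using $F^{j+1}(0,C)=F^j(0,C)^2+C$ repeatedly, I get
\[
F^m(0,C)-F^n(0,C)=\bigl(F^{m-n+1}(0,C)-C\bigr)\prod_{j=1}^{n-1}\bigl(F^{m-j}(0,C)+F^{n-j}(0,C)\bigr).
\]
Here $F^{m-n+1}(0,C)-C=C^2G_{m-n}(C)^2$, and each sum equals $C\,(G_{m-j}+G_{n-j})$. Since $c\neq0$, the minimal polynomial $P$ of $c$ over $\QQ$ divides a product of polynomials of the form $G_l$ and $H_{a,b}=G_a+G_b$ with $a>b\ge1$. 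As $c$ is a root of $P$, $P$ divides one of these irreducible-factor-wise, i.e.\ $P$ is an irreducible factor of a single $G_l$ or of a single $H_{a,b}$.

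\emph{Step 2: discard the unit cases.} Since $G_l(0)=1$, every root of $G_l$ is a $2$-adic unit (the constant term is a unit and the polynomial is monic). So if $P\mid G_l$, then $c\in O_{\QQ(c)}^\times$. The substantive case is therefore $P\mid H_{a,b}$ with $v_\P(c)>0$.

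\emph{Step 3: the case $P\mid H_{a,b}$.} Here $H_{a,b}(0)=2$. This is the point I expect to be the main obstacle, because $H_{a,b}$ is not Eisenstein in general: modulo $2$,
\[
H_{a,b}\equiv C^{2^{b-1}}\,(1+C+\dots+C^{2^{a-1}-2^{b-1}-1}),
\]
so it has unit roots as well. To handle this I would pass to $\QQ_2$.
\begin{itemize}
\item By Hensel's lemma applied to the coprime factorization mod $2$, $H_{a,b}=UE$ over $\ZZ_2$. Here $U$ has unit constant term, and $E$ is monic of degree $2^{b-1}$ with $E\equiv C^{2^{b-1}}\bmod 2$.
\item Since $U(0)$ is a unit and $H_{a,b}(0)=2$, the constant term $E(0)$ has valuation exactly $1$. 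So $E$ is Eisenstein over $\ZZ_2$, hence irreducible over $\QQ_2$, and every root of $E$ has valuation $1/2^{b-1}$.
\end{itemize}

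\emph{Step 4: conclude.} Now $P$ has $c$ as a root, and $\sigma(c)$ has positive valuation for the embedding $\sigma\colon\QQ(c)\to\overline{\QQ}_2$ corresponding to $\P$. Hence $P$ and $E$ share a root in $\overline{\QQ}_2$, and irreducibility of $E$ over $\QQ_2$ gives $E\mid P$ over $\QQ_2$.
\begin{itemize}
\item It remains to show $P=E$, i.e.\ that $P$ has no unit roots. Then $[\QQ(c)\colon\QQ]=2^{b-1}$, the prime $2$ is totally ramified in $\QQ(c)$ with a unique $\P$, and $v_\P(c)=1/[\QQ(c)\colon\QQ]$.
\item To rule out unit roots, I would use the Galois-stable structure of the postcritical orbit. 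If $c'$ is a unit conjugate of $c$, then $T^2+c'$ is PCF with the same preperiod data, and comparing the reductions mod $2$ of the orbit $0\mapsto c\mapsto c^2+c\mapsto\cdots$ (which vanish identically when $v(c)>0$, but not when $c$ is a unit) should separate the two types of roots. In practice, this means showing that roots of $E$ and roots of $U$ satisfy different exact-period equations, so that the irreducible factor of $F^m(0,C)-F^n(0,C)$ through $c$ cannot contain both.
\item If this separation fails, the fallback is to use the stronger structure already available when the preperiod is $1$: \Cref{lemma:lemma-for-2-adic-valuation} shows that $B_k$ is Eisenstein at $2$. I would then try to reduce the general $(n,m)$ case to factors of $A_k$-type through the identity $A_{k+1}=CB_kA_k$.
\end{itemize}
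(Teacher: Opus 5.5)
Your Steps 1 and 2 are correct. The proposal has a genuine gap in Step 4, and Step 3 contains a computational error.

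\textbf{The gap in Step 4.} This step is the whole content of the proposition. The dichotomy concerns all conjugates of $c$ at once: you must show that $P$ has no root of $2$-adic valuation $0$ as soon as it has one of positive valuation. Step 3 is purely local and yields only $E \mid P$ in $\QQ_2[C]$. Nothing over $\ZZ_2$ prevents a $\QQ$-irreducible $P$ from being $E$ times some factors of $U$. Your ``exact-period separation'' and your fallback are stated as hopes, not arguments.

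\textbf{The error in Step 3.} The recursion gives $G_k \equiv 1 + C\,G_{k-1}(C^2) \pmod 2$, hence $G_k \equiv \sum_{j=0}^{k-1} C^{2^j-1}$. For instance, $G_3 = C^3+2C^2+C+1 \equiv C^3+C+1$. This is not the dense sum $1+C+\dots+C^{2^{k-1}-1}$ you used. Consequently $H_{a,b} \equiv C^{2^b-1}\bigl(1+\sum_{j=b+1}^{a-1}C^{2^j-2^b}\bigr) \pmod 2$. So $E$ has degree $2^b-1$, and its roots have valuation $1/(2^b-1)$, not $1/2^{b-1}$.

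\textbf{The missing idea.} It is global: the Eisenstein part of $H_{a,b}$ is already an integer polynomial, namely $B_b = G_{b+1}+G_b$ from \Cref{lemma:lemma-for-2-adic-valuation}. That polynomial is Eisenstein at $2$, hence irreducible over $\QQ$.

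To see that $B_b \mid H_{a,b}$, let $B_b(c')=0$.
\begin{itemize}
\item First, $F^{b+1}(0,c')+F^b(0,c') = c'B_b(c') = 0$.
\item Hence $F^{b+2}(0,c') = F^{b+1}(0,c')^2+c' = F^b(0,c')^2+c' = F^{b+1}(0,c')$.
\item So $F^a(0,c') = F^{b+1}(0,c') = -F^b(0,c')$ for every $a>b$, that is, $c'H_{a,b}(c') = F^a(0,c')+F^b(0,c') = 0$.
\item Since $c'\neq 0$ (because $B_b(0)=2$), we get $H_{a,b}(c')=0$.
\end{itemize}
As $B_b$ is monic and separable, $H_{a,b} = B_b U_{a,b}$ with $U_{a,b}\in\ZZ[C]$ and $U_{a,b}(0) = H_{a,b}(0)/B_b(0) = 1$.

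Now $P \mid H_{a,b}$ forces one of two cases, with no Hensel lifting needed:
\begin{itemize}
\item $P = B_b$: then $2$ is totally ramified in $\QQ(c)$ and $v_\P(c) = 1/[\QQ(c):\QQ]$.
\item $P \mid U_{a,b}$: then $P(0)=\pm 1$ and $c$ is a unit.
\end{itemize}
This is exactly the paper's dichotomy. The paper telescopes $F^{k+h}(0,C)-F^k(0,C) = \sum_{i=0}^{h-1}A_{k+i} = A_kR$, with $A_k = C^{k+1}B_1\cdots B_{k-1}$ and $R(0)=1$. Once you split $B_{n-j}$ off each $H_{m-j,n-j}$, your difference-of-squares factorization becomes that same factorization.
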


\begin{proof}
Since $f$ is PCF, there exist $h, k \in \NN$ such that $f^{k + h}(0) 
= f^{k}(0)$. It follows that the polynomial
\(
	Q(C) = F^{k + h}(0, C) - F^k(0, C)
\)
vanishes at $C = c$. Our strategy consists of factoring $Q$ into a 
product of irreducible 2-Eisenstein polynomials and a polynomial with 
constant term 1. Since the minimal polynomial of $c$ divides $Q$, it 
must divide one of these factors, yielding the two alternatives.
Note that
\begin{equation}\label{eqn:poly}
	Q(C)
	= \sum_{i = 0}^{h - 1} \left(
		F^{k + i + 1}(0, C) - F^{k + i}(0, C)
	\right)
	= \sum_{i = 0}^{h - 1} A_{k + i}(C).
\end{equation}
A repeated application of \Cref{lemma:lemma-for-2-adic-valuation} 
yields
\begin{equation}\label{eqn:formula-Ak}
	A_{k + i}(C) = C^i A_k(C) B_k(C) B_{k + 1}(C) \cdots B_{k + i - 1}
\end{equation}
for all $i \in \{1, \dots, h - 1\}$.
Substituting~\eqref{eqn:formula-Ak} into~\eqref{eqn:poly} and 
factoring out $A_k$, we obtain
\[
	Q(C)
	= A_k(C) \cdot
	\left(
		1 + 
		\sum_{i = 1}^{h - 1} C^i \prod_{j = 1}^i B_{k + i - j}
	\right).
\]
Write $R(C) = 1 + \sum_{i = 1}^{h - 1} C^i \prod_{j = 1}^i B_{k + i - j}$.

Applying \Cref{lemma:lemma-for-2-adic-valuation} repeatedly once 
more, we obtain 
\[
	Q(C)
	= C^{k + 1} B_1(C) B_2(C) \cdots B_{k - 1}(C) R(C).
\]

Let $P \in \ZZ[C]$ be the minimal polynomial of $c$. From the above 
discussion, $P$ is an irreducible factor of $Q$. Since $c \neq 0$, it 
follows that $P(C) \neq C$. The factorization above implies that 
either $P$ equals $B_i$ for some $i \in \{1, 2, \dots, k - 1\}$ or it 
divides $R(C)$.

In the first case, it follows from the proof of~\Cref{lemma:lemma-for-2-adic-valuation} 
that $B_i$ is $2$-Eisenstein. Its reduction modulo $2$ is $B_i(C) 
\equiv C^{\deg B_i} \mod 2$. Since $B_i$ is the minimal polynomial of 
$c$, it follows that $\deg B_i = [\QQ(c) \colon \QQ]$. Finally,~\cite[Proposition~I.8.3]{MR1697859} 
implies that $2$ is totally ramified in $\QQ(c)$, from which $v_\P(c) 
= 1/[\QQ(c) \colon \QQ]$. 

In the second case, note that $R(C)$ has constant coefficient $1$. 
Therefore, the constant coefficient of any divisor of $R(C)$ in $\ZZ[C]
$ must divide $1$. Thus the constant coefficient of the minimal 
polynomial of $c$ is $\pm 1$. Hence, $c$ is an invertible algebraic 
integer.
\end{proof}

\begin{remark}\label{remark:c-valuation}
Since we assume that $2$ is unramified in $K$, if $c \notin\{0, -2\}$, 
then $v_\p(c) = 0$ for every prime ideal~$\p$ above $2$. Indeed, if 
$v_\p(c) = 1 / [\QQ(c) \colon \QQ]$ and $[\QQ(c) \colon \QQ] > 1$, 
then $2$ ramifies in $\QQ(c) \subseteq K$, a contradiction. 
Thus $[\QQ(c) \colon \QQ] = 1$. The only possible values of 
$c \in \QQ$ for which $T^2 + c$ is PCF are $\{0, -1, -2\}$ and the
only nonzero nonunit is $c = -2$.
\end{remark}

Let $\p$ be a prime ideal of $O_K$. Define $\overline{O_\p} = 
\{ \alpha \in \overline {K_\p} \colon \bar{v}_\p(\alpha) \ge 0\}$, 
where $\bar{v}_\p$ denotes the extension of the $\p$-adic valuation 
to $\overline{K_\p}$. Note that, for $\alpha \in K^\mathrm{cyc}$, the 
condition that $\alpha \in \overline{O_\p}$ for some prime ideal $\p$ lying 
above $2$ is equivalent to the existence of an embedding $\sigma 
\colon \QQ(\alpha) \rightarrow \overline{\QQ}_2$ such that $\sigma
(\alpha)$ is $2$-adically integral.

\begin{prop}\label{prop:second-iterate-degeneration}
Suppose that $2$ ramifies in $K(\zeta^4)$. Let 
$f(T) = T^2 + c \in K[T]$ be PCF with $c \notin \{ - 2, 0\}$. Suppose 
that $\alpha \in \overline{O_\p}$ for a prime ideal $\p$ of $K$ lying 
above $2$. If $\alpha \in K(\zeta) \setminus K(\zeta^2)$ and 
$f(\alpha) \in K(\zeta^2)$, then $f^2(\alpha) \notin K(\zeta^4)$.
\end{prop}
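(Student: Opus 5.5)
Throughout, I work in the completion at the prime above $\p$ where $\alpha$ is integral, and use the notation $\kappa,\rho$ introduced after \Cref{lemma:quartic-cyclic}. The plan is to argue by contradiction: assuming $f^2(\alpha)\in K(\zeta^4)$ forces $\rho(f(\alpha))=-f(\alpha)$. This becomes a trace identity, and the trace has too large a $2$-adic valuation to equal $-2c$, since $c$ is a $2$-adic unit.

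\emph{Step 1 (the shape of $\alpha$).} Since $f(\alpha)\in K(\zeta^2)$, we have $\kappa(\alpha)^2+c=\alpha^2+c$, so $\kappa(\alpha)=\pm\alpha$. Because $\alpha\notin K(\zeta^2)$, we get $\kappa(\alpha)=-\alpha$. As $\kappa(\zeta)=-\zeta$, the element $\gamma=\alpha/\zeta$ is fixed by $\kappa$, so $\gamma\in K(\zeta^2)$ and $\alpha=\zeta\gamma$.

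\emph{Step 2 (reduction to a sign).} Suppose, for contradiction, that $\rho(f^2(\alpha))=f^2(\alpha)$. Then $\rho(f(\alpha))^2=f(\alpha)^2$, so $\rho(f(\alpha))=\pm f(\alpha)$. The plus sign would give $f(\alpha)\in K(\zeta^4)$, which contradicts \Cref{lemma:first-iterate-degeneration}. Hence $\rho(f(\alpha))=-f(\alpha)$, that is, $\alpha^2+\rho(\alpha)^2=-2c$. Next I compute $\rho(\alpha)^2$. From $\rho(\zeta^2)=-\zeta^2$ we get $\rho(\zeta)^2=-\zeta^2$, so $\rho(\alpha)^2=-\zeta^2\rho(\gamma)^2$. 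The identity therefore becomes
\[
\zeta^2\bigl(\gamma^2-\rho(\gamma)^2\bigr)=-2c .
\]

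\emph{Step 3 (local expansion, the main step).} Fix a prime $\q$ of $K(\zeta)$ above $\p$ at which $\alpha$ is integral; this is possible because $\alpha\in\overline{O_\p}$. Let $\q'$ and $\q''$ be the primes below $\q$ in $K(\zeta^2)$ and $K(\zeta^4)$. Since $\zeta$ is a unit, $\gamma\in O_{\q'}$. Since $2$ ramifies in $K(\zeta^4)$, it also ramifies in $K(\zeta^2)$, so \Cref{lemma:totally-ramified-1} applies with $\zeta^2$ in place of $\zeta$. Repeating the claim in the proof of \Cref{prop:ramified-easy-case} with $\zeta^2$ in place of $\zeta$ then gives $O_{\q'}=O_{\q''}[\zeta^2]$. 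I expect this transfer to be the main point needing care: it requires checking that $1-\zeta_{2^{e-1}}$ is a uniformizer of the totally ramified quadratic extension $K_\p(\zeta^2)/K_\p(\zeta^4)$. So write $\gamma=a+b\zeta^2$ with $a,b\in O_{\q''}$. Since $\rho$ fixes $a,b$ and sends $\zeta^2\mapsto-\zeta^2$, we have $\rho(\gamma)=a-b\zeta^2$. Hence $\gamma^2-\rho(\gamma)^2=4ab\zeta^2$, and the identity of Step 2 reads
\[
4ab\,\zeta^4=-2c,\qquad\text{so}\qquad c=-2ab\zeta^4 .
\]

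\emph{Step 4 (contradiction).} Because $a,b$ are integral and $\zeta$ is a unit, Step 3 gives $\hat v_\q(c)\ge 1$. But $c\notin\{0,-2\}$, so \Cref{remark:c-valuation} gives $v_\p(c)=0$, a contradiction. Hence $f^2(\alpha)\notin K(\zeta^4)$.
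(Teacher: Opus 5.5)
Your proposal is correct and is essentially the paper's argument. Both assume $\rho(f^2(\alpha))=f^2(\alpha)$, reduce to $\rho(\alpha)^2+\alpha^2+2c=0$, write $\alpha=\zeta\gamma$ with $\gamma\in K(\zeta^2)$ integral at $\q'$, expand $\gamma$ via $O_{\q'}=O_{\q''}[\zeta^2]$, and contradict the fact that $c$ is a $2$-adic unit. The differences are cosmetic: you exclude the plus sign via \Cref{lemma:first-iterate-degeneration} rather than by factoring $f^2(T_1)-f^2(T_2)$, you get $\gamma$ directly as $\alpha/\zeta$ rather than from a basis decomposition, and you derive the exact identity $c=-2ab\zeta^4$ rather than the paper's $\rho(\beta)=\beta+2\delta$.
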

\begin{proof}
Recall that $\rho$ denotes the generator of $\Gal(K(\zeta) / K(\zeta^4))$ 
and that $\kappa = \rho^2$ denotes the generator of $\Gal(K(\zeta)/K(\zeta^2))$. 
Suppose for contradiction that $f^2(\alpha) \in K(\zeta^4)$. This means 
equivalently that $\rho(f^2(\alpha)) = f^2(\alpha)$. Note that 
\begin{align*}
    f^2(T_1) - f^2(T_2)
    &= (T_1^2 + c)^2 - (T_2^2 + c)^2\\
    &= T_1^4 - T_2^4 + 2c (T_1^2 - T_2^2)\\
    &= \left(T_1^2 - T_2^2\right) \left(T_1^2 + T_2^2 + 2c\right).
\end{align*}
Therefore, setting $T_1 = \rho(\alpha)$ and $T_2 = \alpha$ we get
\begin{equation}\label{eqn:factors}
    0
    = \rho(f^2(\alpha)) - f^2(\alpha)
    = ({\rho(\alpha)}^2 - \alpha^2)({\rho(\alpha)}^2 + \alpha^2 + 2 c).
\end{equation}
The first factor cannot vanish. Indeed, if it did, it would follow 
that $\rho(\alpha) = \pm \alpha$, so $\kappa(\alpha) = \rho^2(\alpha) 
= \alpha$, contradicting $\alpha \notin K(\zeta^2)$.
We conclude that ${\rho(\alpha)}^2 + \alpha^2 + 2 c = 0$. 

We claim that there exists $\beta \in K(\zeta^2)$ such that $\alpha = 
\zeta \beta$. It follows from the hypothesis that $\kappa(f(\alpha)) 
= f(\alpha)$ and $\kappa(\alpha) \neq \alpha$. The same argument as 
before implies that $\kappa(\alpha) = - \alpha$. Since $K(\zeta)$ is 
a $K(\zeta^2)$-vector space of dimension $2$, there exist $\beta', 
\beta \in K(\zeta^2)$ such that $\alpha = \beta' + \zeta \beta$. 
Applying $\kappa$ we get
\[
    \beta' - \zeta \beta 
    = \kappa(\alpha) 
    = - \alpha 
    = -\beta' - \zeta \beta.
\]
and conclude that $\beta' = 0$. Hence, $\alpha = \zeta \beta$. 
In particular, $\beta = \alpha \zeta^{-1}$ is $\q$-adically integral 
for some prime ideal $\q$ of $O_{K(\zeta)}$ above $\p$. 

Substitute the expression $\alpha = \zeta \beta$ in the second factor 
of~\eqref{eqn:factors} to get  
\[
    0 = {\rho(\alpha)}^2 + \alpha^2 + 2 c
      = -\zeta^2 \rho(\beta^2) + \zeta^2 \beta^2 + 2c.
\]

Write $\p' = \q \cap K(\zeta^2)$ and $\p'' = \q \cap K(\zeta^4)$.
Consider the local extensions $K(\zeta^4)_{\p''} \subseteq K(\zeta^2)_{\p'} 
\subseteq K(\zeta)_\q$. Applying the argument of~\Cref{prop:ramified-easy-case}, 
with $\zeta$ replaced by $\zeta^2$, we have $O_{\p'} = O_{\p''}[\zeta^2]$.
We claim that $\rho(\gamma) \equiv \gamma \mod 2O_{\p'}$ for all 
$\gamma \in O_{\p'}$. Indeed if $\gamma = \gamma_1 + \zeta^2 
\gamma_2$ for $\gamma_1, \gamma_2 \in O_{\p''}$, then $\rho(\gamma) - 
\gamma = -2 \zeta^2 \gamma_2 \in 2 O_{\p'}$. 

Since $\beta \in O_{\p'}$, we can write $\rho(\beta) = \beta + 2 
\delta$ for some $\delta \in O_{\p'}$. Therefore
\[
    0 = -\zeta^2 \rho(\beta^2) + \zeta^2 \beta^2 + 2c
      = -\zeta^2 {(\beta + 2 \delta)}^2 + \zeta^2 \beta^2 + 2c
      = -4 \zeta^2 \beta \delta - 4\zeta^2 \delta^2 + 2c.
\]
We conclude that 
\begin{equation}\label{eqn:expression}
    \zeta^2 \beta \delta + \zeta^2 \delta^2 = \frac{c}{2}. 
\end{equation}
The left-hand side lies in $O_{\p'}$. In~\Cref{remark:c-valuation} we 
proved that if $c \notin \{-2, 0\}$, then $c$ is a $\p$-adic unit. 
Thus $v_{\p}(c/2) = -1$, as $v_\p(2) = 1$. This contradicts the 
integrality of the left-hand side of~\eqref{eqn:expression}. We 
conclude that $f^2(\alpha) \notin~K(\zeta^4)$.\qedhere
\end{proof}

We can characterize all $\alpha \in K(\zeta)$ such that $f(\alpha) 
\in K(\zeta^2)$ and $f^2(\alpha) \in K(\zeta^4)$ as follows.

\begin{lemma}\label{lemma:characterize} 
Suppose that $2$ ramifies in $K(\zeta^4)$.
Let $f(T) = T^2 + c \in K[T]$ be PCF and suppose that $c \neq 0$. If 
$\alpha \in K(\zeta) \setminus K(\zeta^2)$, $f(\alpha) \in K(\zeta^2)
$ and $f^2(\alpha) \in K(\zeta^4)$, then there exists $\tilde \alpha 
\in K(\zeta^4) \setminus \{0\}$ such that $\alpha = \zeta 
\tilde{\alpha} - c / (2\zeta\tilde{\alpha})$. 
\end{lemma}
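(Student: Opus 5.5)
The plan is to write down $\tilde\alpha$ explicitly as a linear combination of $\alpha$ and $\rho(\alpha)$, rather than solve for it. The identity to be proved, $\alpha = t - c/(2t)$ with $t = \zeta\tilde\alpha$, is equivalent (for $t \neq 0$) to the quadratic relation
\[
    t^2 - \alpha t = \frac{c}{2}.
\]
So it suffices to find $t \in \zeta K(\zeta^4)$, $t \neq 0$, satisfying this relation.

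\emph{Step 1: the input identities.} As in the proof of \Cref{prop:second-iterate-degeneration}, the hypotheses $\kappa(f(\alpha)) = f(\alpha)$ and $\kappa(\alpha) \neq \alpha$ give $\kappa(\alpha)^2 = \alpha^2$, hence $\kappa(\alpha) = -\alpha$. The factorization \eqref{eqn:factors} used there needs no integrality assumption. Its first factor is nonzero because $\alpha \notin K(\zeta^2)$, so $\rho(f^2(\alpha)) = f^2(\alpha)$ forces
\[
    \rho(\alpha)^2 + \alpha^2 + 2c = 0.
\]
Also write $\rho(\zeta) = j\zeta$. Since $\rho$ has order $4$ and $\rho(\zeta^4) = \zeta^4$, the element $j$ is a primitive fourth root of unity. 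Moreover $j = \pm i$ lies in $K(\zeta^4)$, because $4$ divides the order of $\zeta^4$ (\Cref{lemma:quartic-cyclic}); in particular $\rho(j) = j$.

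\emph{Step 2: the candidate.} Try $t = (\alpha + \lambda\rho(\alpha))/2$ with $\lambda^2 = -1$. Then
\[
    t^2 - \alpha t = \frac{\lambda^2\rho(\alpha)^2 - \alpha^2}{4} = \frac{-\rho(\alpha)^2 - \alpha^2}{4} = \frac{c}{2},
\]
using the identity from Step 1. It remains to fix the sign of $\lambda$ so that $\tilde\alpha := t/\zeta$ lies in $K(\zeta^4)$. Since $\rho$ generates $\Gal(K(\zeta)/K(\zeta^4))$, this is the condition $\rho(t) = jt$.

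Using $\rho^2(\alpha) = \kappa(\alpha) = -\alpha$, we get $\rho(t) = (\rho(\alpha) - \lambda\alpha)/2$. Comparing with $jt = (j\alpha + j\lambda\rho(\alpha))/2$ requires $-\lambda = j$ and $j\lambda = 1$. Both hold for $\lambda = -j$, since then $j\lambda = -j^2 = 1$. So I take
\[
    t = \frac{\alpha - j\rho(\alpha)}{2}.
\]

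\emph{Step 3: conclusion.} With this choice $\rho(t/\zeta) = jt/(j\zeta) = t/\zeta$, so $\tilde\alpha = t/\zeta \in K(\zeta^4)$. Next, $t \neq 0$, because $t(t - \alpha) = c/2 \neq 0$ as $c \neq 0$. Dividing $t^2 - \alpha t = c/2$ by $t$ gives $\alpha = t - c/(2t) = \zeta\tilde\alpha - c/(2\zeta\tilde\alpha)$.

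The only real obstacle is locating the correct ansatz $t = (\alpha + \lambda\rho(\alpha))/2$ and the sign of $\lambda$. Once that is found, every step is a short computation with $\kappa = \rho^2$, and the only structural input is $j \in K(\zeta^4)$, which comes from \Cref{lemma:quartic-cyclic}.
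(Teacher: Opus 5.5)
Your proof is correct. It ends up with the same $\tilde\alpha$ as the paper, but gets there by a different route.

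\textbf{The paper's route.} It first uses \Cref{lemma:first-iterate-degeneration} to get $f(\alpha)\in K(\zeta^2)\setminus K(\zeta^4)$. It then applies the ``$\kappa(\alpha)=-\alpha$'' argument twice, writing $\alpha=\zeta\beta$ with $\beta\in K(\zeta^2)$ and $f(\alpha)=\zeta^2\gamma$ with $\gamma\in K(\zeta^4)$. Expanding $\beta=\beta_1+\zeta^2\beta_2$ in the basis $\{1,\zeta^2\}$ of $K(\zeta^2)/K(\zeta^4)$ and comparing the $1$-coefficients in $f(\alpha)=\zeta^2\gamma$ gives $2\zeta^4\beta_1\beta_2+c=0$. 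The paper then takes $\tilde\alpha=\beta_1$.

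\textbf{Your route.} You keep the second factor of \eqref{eqn:factors}, $\rho(\alpha)^2+\alpha^2+2c=0$, and isolate the right component with the resolvent $t=(\alpha-j\rho(\alpha))/2$. This is the projection of $\alpha$ onto the eigenspace $\zeta K(\zeta^4)$ of $\rho$ for the eigenvalue $j$.

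\textbf{How the two match.} In the paper's coordinates, $\alpha=\zeta\beta_1+\zeta^3\beta_2$ and $\rho(\alpha)=j(\zeta\beta_1-\zeta^3\beta_2)$. So your $t$ is exactly $\zeta\beta_1$. Your relation expands to $4\zeta^4\beta_1\beta_2+2c=0$, which is the paper's constraint.

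\textbf{What each buys.} Your version is basis-free and skips \Cref{lemma:first-iterate-degeneration} as well as the second application of the argument to $f(\alpha)$. It also gives a closed formula $\tilde\alpha=(\alpha-j\rho(\alpha))/(2\zeta)$ in terms of $\alpha$ and one conjugate. The paper's coordinate computation instead shows the structure $\alpha=\zeta\beta_1+\zeta^3\beta_2$ with $2\zeta^4\beta_1\beta_2=-c$ directly.

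\textbf{A minor ordering point.} The identity $\rho^k(\zeta)=j^k\zeta$ requires $\rho(j)=j$, but you use it before establishing that. First note that $j^4=1$ already forces $j\in\{\pm1,\pm i\}\subseteq K(\zeta^4)$. Only then combine $\rho^k(\zeta)=j^k\zeta$ with $\rho$ having order $4$ to conclude that $j$ is a primitive fourth root of unity.
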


\begin{proof}
By \Cref{lemma:first-iterate-degeneration}, we have $f(\alpha) \in 
K(\zeta^2) \setminus K(\zeta^4)$. Apply the same argument as 
in~\Cref{prop:second-iterate-degeneration} twice: once to $\alpha$ 
(as $f(\alpha) \in K(\zeta^2)$) and once to $f(\alpha)$ (as 
$f^2(\alpha) \in K(\zeta^4))$ to get $\beta \in K(\zeta^2)$ and 
$\gamma \in K(\zeta^4)$ such that $\alpha = \zeta \beta$ and 
$f(\alpha) = \zeta^2 \gamma$. 

The hypothesis on the ramification of $2$ yields $K(\zeta^2) \neq 
K(\zeta^4)$. On the other hand $\zeta^2$ is a zero of $T^2 - \zeta^4 
\in K(\zeta^4)[T]$, so $[K(\zeta^2) \colon K(\zeta^4)] = 2$. Thus 
$\{1, \zeta^2\}$ is a basis of $K(\zeta^2)$ over $K(\zeta^4)$, and we 
may write $\beta = \beta_1 + \zeta^2 \beta_2$ for $\beta_1, \beta_2 
\in K(\zeta^4)$.

Using $\alpha = \zeta \beta$ and $f(\alpha) = \zeta^2 \gamma$, we 
compute 
\[
    \zeta^2\gamma
    = f(\alpha)
    = (2\zeta^4 \beta_1 \beta_2 + c) + \zeta^2(\beta_1^2 + \zeta^4 
    \beta_2^2).
\]
Comparing the coefficients with respect to the basis $\{1, \zeta^2\}$ 
of $K(\zeta^2)$ gives $2 \zeta^4 \beta_1 \beta_2 + c = 0$ and 
$\beta_1^2 + \zeta^4 \beta_2^2 = \gamma$. If $\beta_1 = 0$, the first 
equation gives $c = 0$, a contradiction. Hence, $\beta_1 \neq 0$ and 
$\beta_2 = - c/(2\zeta^4\beta_1)$. Thus 
\[
    \alpha 
    = \zeta\left(\beta_1 - \zeta^2 \frac{c}{2 \zeta^4 \beta_1}\right)
    = \zeta \beta_1 - \frac{c}{2 \zeta \beta_1}.
\]
Taking $\tilde{\alpha} = \beta_1$ proves the claim.
\end{proof}

We summarize the unramified and ramified cases as follows. Recall 
that in the unramified case $\varphi$ is the $d$th power of the 
Frobenius element at $2$, where $d = [K \colon \QQ]$, and the 
positive integers $m$ and $n$ are chosen such that $f^{n - 1}(0) = 
f^{m - 1}(0)$. In the ramified case we denote by $\kappa$ the 
generator of $\Gal(K(\zeta) / K(\zeta^2))$. We set the notation
\[
    \left(\sigma, (h,k)\right)
    =
    \begin{cases}
        \left(
            \varphi^{m - n}, 
            (dn, dm)
        \right) 
        & 2 \text{ is unramified in } K(\zeta),\\
        \left(
            \kappa, 
            (1, 1)
        \right) 
        & 2 \text{ ramifies in } K(\zeta).
    \end{cases}
\]

\begin{lemma}\label{lemma:2-adic-gain}
For all $\alpha \in K(\zeta)$ and all 
places $v \in M_{K(\zeta)}$ lying above $2$, we have
\[
    \left| \sigma(f^h(\alpha)) - f^k(\alpha) \right|_v
    \le \frac{1}{4} \max\left\{
        1, 
        \left| \sigma(f^h(\alpha)) \right|_v
    \right\}
    \cdot\max\{
        1, 
        \left| \alpha \right|_v
    \}^{2^k}.
\]
\end{lemma}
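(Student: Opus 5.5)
The plan is to deduce the inequality from the congruences already established: \Cref{prop:unramified-congruence} in the unramified case and \Cref{prop:ramified-easy-general-case} in the ramified case. In both cases I take the $\beta \in O_{K(\zeta)}$ given there. It satisfies $|\beta|_v = \max\{1,|\alpha|_v\}^{-1}$ for all places $v$ above $2$, and $\beta^{2^j}f^j(\alpha)\in O_{K(\zeta)}$ for the relevant $j$. It also satisfies
\[
    \sigma\bigl(\beta^{2^h}f^h(\alpha)\bigr) \equiv \beta^{2^k} f^k(\alpha) \mod 4O_{K(\zeta)}.
\]
This is (iii.) of \Cref{prop:unramified-congruence} with $(h,k)=(dn,dm)$ and $\sigma = \varphi^{m-n}$, and (iii.) of \Cref{prop:ramified-easy-general-case} with $(h,k)=(1,1)$ and $\sigma=\kappa$. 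So there is $\gamma\in O_{K(\zeta)}$ with $\sigma(\beta)^{2^h}\sigma(f^h(\alpha)) - \beta^{2^k}f^k(\alpha) = 4\gamma$.

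Next I rewrite this identity as
\[
    \beta^{2^k}\bigl(\sigma(f^h(\alpha)) - f^k(\alpha)\bigr) = 4\gamma + \bigl(\beta^{2^k} - \sigma(\beta)^{2^h}\bigr)\,\sigma(f^h(\alpha)).
\]
The key auxiliary claim is that $\beta^{2^k} \equiv \sigma(\beta)^{2^h} \mod 4O_{K(\zeta)}$.

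In the ramified case $h=k=1$, and the claim is exactly the statement $\kappa(\gamma^2)\equiv\gamma^2 \mod 4$ proved inside \Cref{prop:ramified-easy-general-case}, applied with $\gamma=\beta$.

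In the unramified case, iterating \eqref{eqn:frobenius} gives $\sigma(\beta)\equiv \beta^{2^{d(m-n)}} \mod 2O_L$. Since $h = dn\ge 1$, \Cref{lemma:idiot-computation} with $z=0$ upgrades this to a congruence modulo $4$ after squaring once. Further squaring preserves it, so $\sigma(\beta)^{2^h} \equiv \beta^{2^{d(m-n)+dn}} = \beta^{2^k} \mod 4O_L$. This is the same manipulation already used in the proof of \Cref{prop:unramified-congruence}.

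Finally, I take $|\cdot|_v$ for $v$ above $2$, normalized so that $|4|_v = 1/4$. The ultrametric inequality with $|\gamma|_v\le 1$ and $|\beta^{2^k}-\sigma(\beta)^{2^h}|_v\le 1/4$ gives
\[
    |\beta|_v^{2^k}\,\bigl|\sigma(f^h(\alpha)) - f^k(\alpha)\bigr|_v \le \tfrac14 \max\{1, |\sigma(f^h(\alpha))|_v\}.
\]
Substituting $|\beta|_v^{-2^k} = \max\{1,|\alpha|_v\}^{2^k}$ yields the claim. The only step needing care is the mod-$4$ congruence for the powers of $\beta$, specifically that $h\ge1$ so that one squaring is available. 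The rest is bookkeeping.
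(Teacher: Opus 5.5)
Your proposal is correct and takes essentially the same route as the paper. You use the same $\beta$ from \Cref{prop:unramified-congruence} and \Cref{prop:ramified-easy-general-case}, separate off $\sigma(\beta^{2^h}f^h(\alpha))$ exactly as the paper does, prove the same congruence $\sigma(\beta)^{2^h}\equiv\beta^{2^k} \bmod 4O_{K(\zeta)}$ (using $h\ge 1$ in the unramified case), and conclude with the ultrametric inequality and $|\beta|_v^{-2^k}=\max\{1,|\alpha|_v\}^{2^k}$.
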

\begin{proof}
Suppose first that $K(\zeta) / \QQ$ is unramified above $2$. Let $\beta 
\in O_{K(\zeta)}$ be as in~\Cref{prop:unramified-congruence}. Then 
$\alpha \beta$ and $\beta$ are both in $O_{K(\zeta)}$ and $\beta^{2^k} 
f^k(\alpha) \in O_{K(\zeta)}$ for all integers $k \ge 0$.
Write
\begin{equation}\label{eqn:eqn-lhs}
\left| \sigma(f^h(\alpha)) - f^k(\alpha) \right|_v
= 	\left|\beta^{-2^k}\right|_v\left| \beta^{2^k}\sigma(f^h(\alpha)) 
- \beta^{2^k}f^k(\alpha) \right|_v.
\end{equation}
After adding and subtracting $\sigma\left(\beta^{2^h}f^h(\alpha)
\right)$, the ultrametric inequality yields that the left-hand side 
of~\eqref{eqn:eqn-lhs} is at most
\begin{equation}\label{eqn:eqn-unr-syn}
    \left|\beta^{-2^k}\right|_v \cdot \max\left\{
	\left|\sigma(f^h(\alpha))\right|_v  
	\left|\sigma(\beta^{2^h}) - \beta^{2^k}\right|_v,
	\left|\sigma\left(\beta^{2^h}f^h(\alpha)\right) - \beta^{2^k}
    f^k(\alpha)\right|_v
	\right\}.
\end{equation}
Since $\beta$ is an algebraic integer, we have 
$\sigma(\beta) \equiv \beta^{2^{d(m - n)}} \mod 2 O_{K(\zeta)}$. 
Since $h = dn \ge 1$, raising to $2^h$ gives 
\[
    \sigma(\beta^{2^h})
    \equiv \beta^{2^{d(m - n) + h}}
    \equiv \beta^{2^{dm}}
    = \beta^{2^k}
    \mod 4 O_{K(\zeta)}.
\]
This yields
\[
	\left|\sigma(\beta^{2^h}) - \beta^{2^k}\right|_v \le \frac{1}{4}.
\]
\Cref{prop:unramified-congruence} implies that
\[
    \left|\sigma\left(\beta^{2^h}f^h(\alpha)\right) - \beta^{2^k}
    f^k(\alpha)\right|_v \le \frac{1}{4}.
\]
By~\Cref{prop:unramified-congruence} it follows that $|\beta^{-2^k}|
_v = \max\{1, |\alpha|_v\}^{2^k}$. Substituting the previous three 
estimates into~\eqref{eqn:eqn-unr-syn}, we obtain
\[
    \frac{1}{4} {\max\{1, |\alpha|_v\}}^{2^k}
    \max\left\{
	\left|\sigma(f^h(\alpha))\right|_v,
	1
	\right\},
\]
which is the desired estimate in the unramified case.

Next suppose that $K(\zeta) / \QQ$ is ramified above $2$. The 
argument is identical. By \Cref{prop:ramified-easy-general-case}, 
there exists $\beta \in O_{K(\zeta)}$ such that $\alpha \beta \in O_{K(\zeta)}$ 
and $\beta^2 f(\alpha) \in O_{K(\zeta)}$. 
Using~\eqref{eqn:eqn-ram-alg-int} and \Cref{prop:ramified-easy-general-case}, 
we obtain 
\[
	\left|
    \sigma(\beta^{2}) - \beta^{2}
    \right|_v 
    \le \frac{1}{4}
\]
and
\[
    \left|
    \sigma\left(\beta^{2} f(\alpha)\right) - \beta^{2}f(\alpha)
    \right|_v 
    \le \frac{1}{4}.
\]
The same computation as above then yields the desired estimate.
\end{proof}

\section{Dimitrov's method}\label{section:dimitrov}

Let $K$ be a number field unramified above $2$ and let $\zeta$ be a 
root of unity. In this section, we define an auxiliary power series 
in $K(\zeta)[\![1/T]\!]$ and study its analytic continuation in 
$\CC$. First, we recall the notion of transfinite diameter of a 
bounded subset of $\CC$. Then we construct the power series. Finally, 
we recall the rationality criterion of P\'olya--Bertrandias.

\subsection{Transfinite diameter}

Given a nonempty bounded subset $C \subset \CC$ and an integer $n\ge 2$, 
define
\[
	\mathrm{d}_n(C)
		= \sup_{z_1, \dots, z_n\in C}
	{\left(
		\prod_{1\le i < j \le n}
		|z_j - z_i| 
	\right)}^{\frac{2}{n(n-1)}}.
\]
The sequence ${(\mathrm{d}_n(C))}_{n\ge 2}$ is nonnegative and 
monotonically nonincreasing~\cite[see the proof of Theorem~5.5.2]{MR1334766}. 
Its limit $\mathrm{d}(C)$ is the \textit{transfinite diameter of} 
$C$. We set the transfinite diameter of the empty set to be zero.
Given $z, w \in \CC$, the transfinite diameter of the line segment
\[
	[z,w] = \{(1 - t)z + t w \colon 0 \le t \le 1\}
\] 
is $|z - w|/4$. The transfinite diameter of a disk coincides with its 
radius. For further details, see~\cite[Ch. 5]{MR1334766}.

\subsection{The function}

Suppose that $K$ is a Galois number field unramified above~$2$ and 
write $d = [K \colon \QQ]$. Let $f(T) = T^2 + c \in K[T]$ be PCF. Let 
$\zeta$ be a root of unity. Recall that if~$2$ is unramified in $K(\zeta)$, 
we denoted by $\varphi$ the $d$th power of the Frobenius element at 
$2$ and by $n$ and $m$ two positive integers such that $f^{n - 1}(0) 
= f^{m - 1}(0)$. Moreover, if~$2$ is ramified in $K(\zeta)$, then 
$\kappa$ denotes the generator of $\Gal(K(\zeta) / K(\zeta^2))$. 
Recall the notation 
\begin{equation}\label{eqn:sigma-notation}
	\left(\sigma, (h,k)\right)
	=
	\begin{cases}
		\left(\varphi^{m - n}, (dn, dm)\right) & 2 \text{ is unramified in } K(\zeta),\\
		\left(\kappa, (1, 1)\right) & 2 \text{ ramifies in } K(\zeta).
	\end{cases}
\end{equation}
Define $\Phi \in K(\zeta)[\![1/T]\!]$ to be the unique power series 
with constant term $1$ such that 
\begin{equation}\label{eqn:power-series}
	\Phi^2 = \frac{T - f^k(\alpha)}{T - \sigma(f^h(\alpha))}.
\end{equation}

\begin{lemma}\label{lemma:archimedean-extension} 
Let $a,b\in\CC$. There exists a holomorphic function $g\colon 
\CC\setminus [a,b]\rightarrow \CC$ such that ${g(z)}^2 = (z - a)/(z - b)$ 
and $\lim_{z \rightarrow \infty} g(z) = 1$.
\end{lemma}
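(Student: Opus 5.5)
The plan is to reduce to the model case of a square root defined off the segment $[-1,1]$ (or $[0,1]$) and then transport it by an affine change of variables. If $a = b$, we simply take $g \equiv 1$ on $\CC \setminus \{a\}$. From now on assume $a \neq b$.

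First consider the Möbius map $w(z) = (z-a)/(z-b)$. It sends $\CC \setminus [a,b]$ into $\CC \setminus (-\infty, 0]$. To see this, note that $w(z)$ is a nonpositive real number exactly when $z - a = -t(z - b)$ for some $t \ge 0$. Solving gives
\[
    z = \frac{a + t b}{1 + t} = \frac{1}{1+t}\,a + \frac{t}{1+t}\,b,
\]
which lies on the segment $[a,b]$ (the value $t=\infty$ corresponds to $z=b$, which is excluded anyway). Also $w(z) \neq 0$ for $z \notin [a,b]$, since $w(z)=0$ only at $z=a$. Hence $w$ maps $\CC \setminus [a,b]$ holomorphically into the slit plane $\CC \setminus (-\infty,0]$.

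On the slit plane the principal branch of the square root is holomorphic; call it $\sqrt{\cdot}$, with $\sqrt{1}=1$. Define $g(z) = \sqrt{w(z)}$. This is a composition of holomorphic maps, so it is holomorphic on $\CC \setminus [a,b]$, and $g(z)^2 = (z-a)/(z-b)$. As $z \to \infty$ we have $w(z) \to 1$, which is a point in the interior of the slit plane, so continuity of the principal branch there gives $g(z) \to \sqrt{1} = 1$.

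The only step needing genuine care is the verification that the preimage of the cut $(-\infty,0]$ is exactly the segment $[a,b]$, i.e., the parametrization above. Everything else is routine.

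One might also want to relate $g$ to the formal series $\Phi$ of \eqref{eqn:power-series}: its Laurent expansion at $\infty$ has constant term $1$ and squares to the same rational function, so it agrees with $\Phi$ under a complex embedding. That identification is not part of the statement, though.
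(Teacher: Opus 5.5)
Your proof is correct, but it takes a different route from the paper.

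\textbf{Your route.} You construct $g$ explicitly. Solving $z-a=-t(z-b)$ shows that $w(z)=(z-a)/(z-b)$ lies in $(-\infty,0]$ only for $z=(a+tb)/(1+t)\in[a,b)$. Hence $w$ maps $\CC\setminus[a,b]$ into the slit plane, so $g=\sqrt{w}$ (principal branch) is holomorphic there. The limit $g(z)\to1$ then follows directly from continuity of the principal branch at $w=1$.

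\textbf{The paper's route.} The paper argues topologically. Because $[a,b]$ is connected, any closed curve in $\CC\setminus[a,b]$ winds equally around $a$ and $b$, so $\int_\gamma h'/h=0$. The logarithmic derivative therefore has a primitive $H$, giving $h=Ce^{H}$ and $g=c_0e^{H/2}$. The normalization at infinity needs a separate step: $g$ is bounded near $\infty$, extends there with value $\pm1$, and one replaces $g$ by $-g$ if needed.

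\textbf{Comparison.} Your version is shorter and gives a closed formula, avoiding both the primitive criterion and the sign correction. However, it rests on the identity $w^{-1}((-\infty,0])=[a,b)$, which is specific to the segment. The paper's argument only uses that the removed set is compact, connected and contains $a$ and $b$. It therefore carries over unchanged to other choices of $B_\tau$, such as the hedgehog mentioned in the remark after \Cref{thm:main-technical-thm}.

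\textbf{Two small remarks.} Your opening sentence announces a reduction to $[-1,1]$ via an affine change of variables that the argument never uses, so it can be dropped. Your closing identification of the Laurent expansion of $g$ at $\infty$ with $\tau(\Phi)$ is correct. It is exactly what the paper uses tacitly when it asserts that $\tau(\Phi)$ extends to the complement of the segment.
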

\begin{proof}
If $a = b$, then take $g = 1$. Therefore, assume that $a$ and $b$ are 
different. Let $h(z) = (z - a)/(z - b)$. For every smooth closed 
curve $\gamma \subseteq \CC \setminus [a, b]$, it holds that $[a, b] 
\subseteq \CC \setminus \gamma$. Since $[a, b]$ is connected, the 
points $a$ and $b$ lie in the same connected component of $\CC 
\setminus \gamma$. So the winding numbers of $\gamma$ with respect to 
$a$ and $b$ are equal. Hence, the integral 
\[
	\int_{\gamma} \frac{h'(z)}{h(z)}
	= \int_{\gamma}\left(
		\frac{1}{z - a} - \frac{1}{z - b}
	\right)
\]
equals zero. Therefore, the function $h'/h$ admits a holomorphic 
primitive $H$ on $\CC \setminus [a, b]$. Then
\[
	\frac{d}{d z} (h e^{- H} )
	= h' e^{- H} - h e^{- H} H'
	= 0.
\]
Hence $h = C e^H$ for some $C \in \CC \setminus \{0\}$. Choose $c_0 
\in \CC$ such that $c_0^2 = C$ and define $g(z) = c_0 e^{H(z)/2}$. 
Then
\begin{align*}
g^2(z) = C e^{H(z)} = h(z) = \frac{z - a}{z - b}.
\end{align*}
Since 
\[
	\lim_{z \rightarrow \infty} g(z)^2 
	= \lim_{z \rightarrow \infty} h(z)
	= 1, 
\]
the function $g$ is bounded in a neighborhood of infinity. So $g$ 
extends holomorphically to infinity, and its value there is either 
$1$ or $-1$. Replacing $g$ with $-g$ if necessary, we obtain $\lim_{z 
\rightarrow \infty} g(z) = 1$. The proof follows.
\end{proof}

For each $\tau \colon K(\zeta) \rightarrow \CC$, define $\tau(\Phi)$ 
by applying $\tau$ coefficientwise.
\Cref{lemma:archimedean-extension} applied with $a = \tau(f^k(\alpha))$ 
and $b = \tau\sigma(f^h(\alpha))$ implies that $\tau(\Phi)$ extends 
to $\CC \setminus [\tau(f^k(\alpha)), \tau\sigma(f^h(\alpha))]$.

Define 
\[
	S 
	= \left\{
		v \in M_{K(\zeta)}^0 
		\colon |\alpha|_v > 1 
		\text{ or } 
		|\sigma(\alpha)|_v > 1
	\right\} 
	\cup 	
	\left\{
		v \in M_{K(\zeta)}^0 
		\colon v \text{ lies above } 2
	\right\}
\]
where $M^0_{K(\zeta)}$ denotes the set of nonarchimedean places of 
$K(\zeta)$. The set $S$ is finite since an algebraic number is 
nonintegral at only finitely many nonarchimedean places. Then define 
\[
	O_{S} = 
	\left\{
		\beta \in K(\zeta) \colon 
		\left|\beta\right|_v \le 1 
		\text{ for all finite } v \not \in S
	\right\}.
\]
We prove that $\Phi \in O_S[\![1/T]\!]$ and that it converges outside 
sufficiently large $v$-adic disks. Before doing so, recall that the 
binomial series
\begin{equation}\label{eqn:binomial}
	\Psi = \sum_{j \ge 0} \binom{1/2}{j} T^j \in \QQ[\![T]\!]
\end{equation}
has the property $\Psi^2 = 1 + T$ and that $\Psi(4 T) \in \ZZ[\![T]\!]$. 
The first property dates back to Newton; the second was proved by 
Dimitrov~\cite{dimitrov}.

\begin{lemma}\label{lemma:coefficients}
With notation as above, $\Phi \in O_{S}[\![1/T]\!]$.
\end{lemma}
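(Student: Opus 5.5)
We write $a = f^k(\alpha)$ and $b = \sigma(f^h(\alpha))$, so that $\Phi^2 = (T - a)/(T - b) = (1 - a/T)(1 - b/T)^{-1}$. The natural route is to rewrite this as $1 + (b - a)/(T - b)$. We then expand $\Phi$ as a composition of the binomial series \eqref{eqn:binomial} with a power series in $1/T$. That is, $\Phi = \Psi(X)$ with
\[
X = \frac{b-a}{T-b} = (b-a)\sum_{j\ge 1} b^{j-1} T^{-j}.
\]
Then $\Phi = \sum_{j\ge0}\binom{1/2}{j}X^j$ is a well-defined power series in $1/T$ with constant term $1$, because $X$ has no constant term. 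Its square is $1+X$, so by uniqueness it coincides with $\Phi$. The coefficient of $T^{-N}$ in $\Phi$ is a finite sum of terms of the form $\binom{1/2}{j}(b-a)^j b^{N-j}$ with $j \le N$. It therefore suffices to check $|\cdot|_v \le 1$ for every finite $v\notin S$.

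Fix a finite place $v \notin S$. Then $v$ does not lie above $2$, and $|\alpha|_v \le 1$ and $|\sigma(\alpha)|_v\le 1$. First, $a$ and $b$ are $v$-integral. Since $c\in O_K$ by \Cref{lemma:integrality-of-c}, $f$ has integral coefficients, so $|f^k(\alpha)|_v\le 1$. Moreover $\sigma$ fixes $K$, hence $\sigma(f^h(\alpha)) = f^h(\sigma(\alpha))$, and $|b|_v\le1$ as well. This is exactly why $S$ was defined to include the places where $\sigma(\alpha)$ is nonintegral.

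Second, the coefficients $\binom{1/2}{j}$ are $v$-integral. We have $\binom{1/2}{j}4^j\in\ZZ$ since $\Psi(4T)\in\ZZ[\![T]\!]$, and $|4|_v = 1$ because $v\nmid 2$. Therefore $|\binom{1/2}{j}|_v\le 1$. Combining these with the ultrametric inequality, every coefficient of $\Phi$ has $|\cdot|_v\le1$, so $\Phi\in O_S[\![1/T]\!]$.

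There is no real obstacle here. The only point requiring care is the $v$-integrality of $\sigma(f^h(\alpha))$, which rests on $\sigma$ commuting with $f$. The step where the estimate would fail is $v\mid 2$, and those places are deliberately placed in $S$.
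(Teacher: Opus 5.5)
Your proposal is correct and follows the paper's proof essentially verbatim. You write $\Phi = \Psi\bigl((b-a)/(T-b)\bigr)$, obtain $v$-integrality of $a$ and $b$ from that of $c$, $\alpha$ and $\sigma(\alpha)$ at finite $v \notin S$, and use that $\binom{1/2}{j}$ is $v$-integral for $v \nmid 2$. You are in fact slightly more explicit than the paper in two places: you justify $\sigma(f^h(\alpha)) = f^h(\sigma(\alpha))$ via $\sigma|_K = \mathrm{id}$, and you justify the binomial integrality via $4^j\binom{1/2}{j} \in \ZZ$.
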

\begin{proof}
Write
\begin{align}\label{eqn:computation-2}
{\Phi(T)}^2 
&= \frac{T - f^k(\alpha)}{T - \sigma(f^h(\alpha))}
= 1 + \frac{\sigma(f^h(\alpha)) - f^k(\alpha)}{T - \sigma(f^h(\alpha))}\notag\\
&= 1 + \frac{\sigma(f^h(\alpha)) - f^k(\alpha)}{T} 
	\sum_{i \ge 0} \frac{\sigma(f^h(\alpha))^{i}}{T^i}.
\end{align}

Therefore 
\begin{align*}
\Phi(T) &= 
\Psi\left(\frac{\sigma(f^h(\alpha)) - f^k(\alpha)}{T} 
	\sum_{i \ge 0} \frac{\sigma(f^h(\alpha))^{i}}{T^i}\right)\\
&= \sum_{j \ge 0} 
\binom{1/2}{j}
{\left(
	\frac{\sigma(f^h(\alpha)) - f^k(\alpha)}{T} 
	\sum_{i \ge 0} \frac{\sigma(f^h(\alpha))^{i}}{T^i}
\right)}^j.
\end{align*}

Let $v \not \in S$ be a nonarchimedean place. Since $f$ is PCF, $c$ 
is an algebraic integer by~\Cref{lemma:integrality-of-c}, so $|c|_v 
\le 1$. Also, by definition of $S$, $|\alpha|_v \le 1$ and 
$|\sigma(\alpha)|_v \le 1$. It follows by induction that 
$|f^h(\alpha)|_v \le 1$ and $|\sigma(f^h(\alpha))|_v \le 1$ for all 
$h \ge 0$. Hence $|\sigma(f^h(\alpha)) - f^k(\alpha)|_v \le 1$.
Since $v \nmid 2$, the binomial coefficients $\binom{1/2}{j}$ are 
also integral with respect to $v$. Therefore every coefficient of 
$\Phi$ is integral with respect to $v$. As this holds for every 
nonarchimedean place $v \not \in S$, we conclude that $\Phi \in O_S[\![1/T]\!]$.
\end{proof}

In the proof of \Cref{lemma:coefficients} we defined
\[
	\Phi = \Psi\left(
		\frac{\sigma(f^h(\alpha)) - f^k(\alpha)}{T - \sigma(f^h(\alpha))}
		\right).
\]
The $v$-adic radius of convergence of the binomial series~\eqref{eqn:binomial} 
is $\rho_v = 1$ if $v$ does not lie above $2$ and $\rho_v = 1/4$ 
otherwise, cf.~\cite[Section~2]{dimitrov}.

Define 
\begin{equation}\label{eqn:nonarchimedean-convergence-radii}
R_v =
\begin{cases}
	\max\{
		1, 
		\left| \sigma(f^h(\alpha)) \right|_v
	\}
	\cdot\max\{
		1, 
		\left| \alpha \right|_v
	\}^{2^k}
	&\text{ if $v$ lies above $2$,}\\
		\max\left\{
		\left|\sigma(f^h(\alpha))\right|_v,
		\left|f^k(\alpha)\right|_v
	\right\}
	& \text{ otherwise}.
\end{cases}
\end{equation}

We show that $\Phi$ converges in the complement of the $v$-adic disk 
$B_v = \{z \in \CC_v \colon \left| z \right|_v \le R_v\}$.
 
\begin{lemma}\label{lemma:nonarchimedean-extension}
For all nonarchimedean places $v \in S$, the series $\Phi$ converges 
in $\CC_v \setminus B_v$.
\end{lemma}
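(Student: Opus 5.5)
Write $a = \sigma(f^h(\alpha))$ and $\delta = a - f^k(\alpha)$. As in the proof of \Cref{lemma:coefficients}, $\Phi = \Psi(u)$ with
\[
	u(T) = \frac{\delta}{T - a} = \frac{\delta}{T}\sum_{i \ge 0} \frac{a^i}{T^i}.
\]
The plan is to show that for $z \in \CC_v$ with $|z|_v > R_v$, the geometric series defining $u(z)$ converges and $|u(z)|_v$ is strictly smaller than the radius of convergence $\rho_v$ of $\Psi$. Then $\Psi(u(z))$ converges. Moreover, the rearranged double series converges to the same value, because everything is nonarchimedean: all terms tend to zero uniformly once the inner quantities are bounded by something less than $\rho_v$. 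So $\Phi$, regarded as a power series in $1/T$, converges at $z$.

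\textbf{Case $v \nmid 2$.} Here $R_v = \max\{|a|_v, |f^k(\alpha)|_v\}$ and $\rho_v = 1$. For $|z|_v > R_v \ge |a|_v$ the ultrametric inequality gives $|z - a|_v = |z|_v$, so the geometric series converges. Also $|\delta|_v \le \max\{|a|_v, |f^k(\alpha)|_v\} = R_v$, hence $|u(z)|_v \le R_v/|z|_v < 1 = \rho_v$.

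\textbf{Case $v \mid 2$.} Here $\rho_v = 1/4$. We have $R_v \ge \max\{1, |a|_v\} \ge |a|_v$, so again $|z - a|_v = |z|_v$ for $|z|_v > R_v$. By \Cref{lemma:2-adic-gain}, $|\delta|_v \le R_v/4$. Therefore $|u(z)|_v \le R_v/(4|z|_v) < 1/4$.

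\textbf{Passing to the series in $1/T$.} Substituting the expansion of $u$ into $\Psi$ gives the double series
\[
	\sum_j \binom{1/2}{j}\,\delta^j z^{-j}\Big(\sum_i a^i z^{-i}\Big)^j .
\]
We need this to converge absolutely in the nonarchimedean sense, i.e.\ the terms must tend to zero. Two bounds are needed. First, $|\binom{1/2}{j}|_v \le \rho_v^{-j}$ up to a harmless factor; this follows from $\Psi(4T) \in \ZZ[\![T]\!]$ when $v \mid 2$ and from $v$-integrality of the coefficients otherwise. Second, each expanded monomial is bounded by $|\delta/z|_v^j \max_i |a/z|_v^i$, where $|a/z|_v < 1$. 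Combining these, the terms are dominated by $(|u|_v\text{-bound}/\rho_v)^j$ times quantities tending to zero, so they tend to zero, which justifies the rearrangement.

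The one substantive input is the $2$-adic estimate $|\delta|_v \le R_v/4$. It is exactly \Cref{lemma:2-adic-gain}, and it is what the definition of $R_v$ at places above $2$ was designed to accommodate. The only care needed is the strict inequality $|z|_v > R_v$, which gives convergence at every point of the complement of $B_v$.
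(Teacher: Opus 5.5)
Your proof is correct and follows the paper's argument: you bound $|u(z)|_v$ strictly below $\rho_v$ using $|z-a|_v=|z|_v$, then $|\delta|_v\le R_v$ when $v\nmid 2$ and \Cref{lemma:2-adic-gain} when $v\mid 2$. Your extra paragraph justifies regrouping into a series in $1/T$, via the coefficient bound $|\binom{1/2}{j}|_v\le\rho_v^{-j}$ and geometric decay in both indices; this fills in a step the paper leaves implicit.
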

\begin{proof}
The binomial series $\Psi$ converges $v$-adically for all $z \in 
\CC_v$ such that $|z| < \rho_v$.
To get the convergence of $\Phi$, we therefore need to guarantee that 
\[
	\left|
		\frac{\sigma(f^h(\alpha)) - f^k(\alpha)}{z - \sigma(f^h(\alpha))}
	\right|_v 
	< \rho_v
\]
for all $z \in \CC_v \setminus B_v$.
If $z \in \CC_v \setminus B_v$, then $|z|_v > R_v \ge |\sigma(f^h(\alpha))|_v$ 
and the ultrametric inequality yields $\left| z \right|_v = \left| z 
- \sigma(f^h(\alpha))\right|_v$. If $v$ does not lie above $2$, then 
\begin{equation*}
	\frac{\left| \sigma(f^h(\alpha)) - f^k(\alpha) \right|_v}{|z|_v} 
	< \frac{\left| \sigma(f^h(\alpha)) - f^k(\alpha) \right|_v}{
		\max\left\{
			\left| \sigma(f^h(\alpha)) \right|_v,
			\left| \sigma(f^h(\alpha)) - f^k(\alpha)\right|_v
		\right\}}
	\le 1.
\end{equation*}
If $v$ lies above $2$ we use~\Cref{lemma:2-adic-gain} and obtain
\begin{equation*}
	\frac{\left| f^k(\alpha) - {\sigma(f^h(\alpha))} \right|_v}{\left| z \right|_v}
	< \frac{\dfrac{1}{4} \max\{
			1, 
			\left| \sigma(f^h(\alpha)) \right|_v
		\}
		\cdot\max\{
			1, 
			\left| \alpha \right|_v
		\}^{2^k}}{
		\max\{
			1, 
			\left| \sigma(f^h(\alpha)) \right|_v
		\}
		\cdot\max\{
			1, 
			\left| \alpha \right|_v
		\}^{2^k}
		}
	= \frac{1}{4}.
\end{equation*}
The proof follows.
\end{proof}

\subsection{Rationality criterion} 

The transfinite diameter plays a role in the rationality criterion of 
P\'olya and Bertrandias, which we now recall. Let $L$ be a number 
field. Given a power series $\Theta = \sum_{i \ge 0} a_i/T^i \in 
L[\![1/T]\!]$ and an embedding $\tau \colon L \rightarrow\CC$, define 
$\tau(\Theta) = \sum_{i \ge 0}\tau(a_i) / T^i$. For $v \in M_L^0$, 
set $d_v = [L_v \colon \QQ_p]$ where $p$ is the rational prime below $v$. 

The ring $L [\![1/T]\!]$ is complete with respect to the valuation 
$v_\infty$ such that $v_\infty(1/T) = 1$. Polynomials in $L[T]$ have 
nonpositive valuation and lie in the field of Laurent power series 
$L(\!(1/T)\!)$. The following version of P\'olya--Bertrandias theorem 
is a sufficient condition that allows one to determine whether a 
power series $\Theta$ lies in $L(T) \cap L[\![1/T]\!]$, 
cf.~\cite[Th\'eor\`eme 5.4.6]{MR0447195}.

\begin{thm}[P\'olya--Bertrandias]\label{thm:polya-bertrandias}
Let $L$ be a number field and consider a power series $\Theta(T) = 
\sum_{i \ge 0} a_i / T^i \in L [\![1/T]\!]$. If there exists a finite 
subset of nonarchimedean places $S \subseteq M_L^0$ such that
\begin{itemize}
	\item[(i.)] if $v \not \in S$, then $\left| a_i \right|_v \le 1$ 
	for all $i \ge 0$;
	\item[(ii.)] for each $\tau \colon L \rightarrow \CC$ there 
	exists a bounded set $B_\tau \subseteq \CC$ such that $\CC 
	\setminus B_\tau$ is a domain and $\tau(\Theta)$ defines a 
	function over $\CC $ that extends holomorphically on $\CC 
	\setminus B_\tau$;
	\item[(iii.)] for each $v \in S$, the power series $\Theta$ 
	converges in the complement of a disk of radius~$R_v$ centered at 
	the origin.
\end{itemize}
Then if
\[
	\prod_{\tau \colon L \rightarrow \CC} 
	\mathrm{d}(B_\tau) 
	\cdot
	\prod_{v \in S} 
	R_v^{d_v} < 1,
\]
we conclude that $\Theta \in L(T) \cap L[\![1/T]\!]$, i.e.\ it is a 
rational function.
\end{thm}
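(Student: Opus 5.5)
The plan is the classical route via Kronecker's Hankel-determinant criterion combined with the product formula. For $n \ge 1$, let $H_n = \det\left(a_{i+j+1}\right)_{0 \le i,j < n} \in L$. Dropping the constant term does not affect rationality. Kronecker's criterion says that $\Theta$ is a rational function, i.e.\ lies in $L(T) \cap L[\![1/T]\!]$, if and only if $H_n = 0$ for all sufficiently large $n$. It therefore suffices to show that any nonzero $H_n$ must satisfy $\prod_{v \in M_L} |H_n|_v^{d_v} < 1$ once $n$ is large. This contradicts the product formula, which gives $\prod_{v \in M_L} |H_n|_v^{d_v} = 1$ for $H_n \ne 0$; here the archimedean places are counted through the embeddings $\tau$, with complex places appearing twice. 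The estimates are made place by place.

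The nonarchimedean places are handled first. If $v \notin S$, hypothesis (i.) and the ultrametric inequality give $|H_n|_v \le 1$. If $v \in S$, hypothesis (iii.) applied to a slightly larger radius $R_v' > R_v$ gives $|a_i|_v \le C_v {R_v'}^{i}$. Expanding the determinant ultrametrically then yields $|H_n|_v \le C_v^n {R_v'}^{n^2}$, which amounts to the observation that the transfinite diameter of a $v$-adic disk is its radius.

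The archimedean places are the main obstacle. Fix $\tau$ and write the coefficients of $\tau(\Theta)$ via Cauchy's formula as $\tau(a_{m+1}) = \mathcal L(z^m)$, where
\[
  \mathcal L(g) = \frac{1}{2\pi i}\oint_{\Gamma} \tau(\Theta)(z)\, g(z)\, dz .
\]
The contour $\Gamma$ should be a level curve of the Green's function of $\CC \setminus B_\tau$ with pole at infinity, chosen close to $B_\tau$. This is possible because $\CC \setminus B_\tau$ is a domain on which $\tau(\Theta)$ is holomorphic by (ii.). The region enclosed by $\Gamma$ is a compact set $E$ with $\mathrm{d}(E) \le \mathrm{d}(B_\tau) + \varepsilon$. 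By bilinearity, together with row and column operations that leave the determinant unchanged, we get $H_n = \det\left(\mathcal L(P_i P_j)\right)$ for any monic polynomials $P_i$ of degree $i$. Take $P_i$ to be Chebyshev (or Fekete) polynomials of $E$; by Fekete--Szeg\H{o} they satisfy $\sup_E |P_i| \le (\mathrm{d}(E) + \varepsilon)^i$ for $i$ large. Then $|\mathcal L(P_i P_j)| \le C_\tau (\mathrm{d}(B_\tau) + 2\varepsilon)^{i+j}$, and Hadamard's inequality gives
\[
  |\tau(H_n)| \le n^{n/2} C_\tau^n (\mathrm{d}(B_\tau) + 2\varepsilon)^{n(n-1)} .
\]
The technical points in this step are constructing $\Gamma$ with the capacity approximation and controlling $\tau(\Theta)$ on $\Gamma$. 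The latter is immediate, since $\Gamma$ is compact and lies in the domain of holomorphy.

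Finally, multiply the local bounds. The factors $n^{n/2}$, $C^n$ and the like are $e^{o(n^2)}$, so
\[
  \prod_{v \in M_L} |H_n|_v^{d_v} \le e^{o(n^2)} \left( \prod_\tau (\mathrm{d}(B_\tau) + 2\varepsilon) \prod_{v \in S} {R_v'}^{d_v} \right)^{n^2} .
\]
Because the strict inequality in the hypothesis leaves room, we can choose $\varepsilon$ small and $R_v'$ close to $R_v$ so that the bracketed quantity is less than $1$. Hence the right-hand side is less than $1$ for large $n$, which forces $H_n = 0$ for all large $n$, and Kronecker's criterion shows that $\Theta$ is rational.
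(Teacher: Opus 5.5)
The paper does not prove this theorem; it quotes it from Amice (Th\'eor\`eme~5.4.6 of the cited book). Your outline is the classical proof behind that reference: Kronecker's Hankel criterion, local bounds for $H_n$, and the product formula. The bookkeeping is right. The unitriangular change of basis is valid, and scaling rows and columns before Hadamard gives the exponent $n(n-1)$. The exponent $n^2$ at $v\in S$ is correct, and the strict inequality in the hypothesis leaves the slack you need.

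The one step that fails as written is taking $\Gamma$ to be a level curve of the Green's function of $\CC\setminus B_\tau$. That Green's function does not exist when $\mathrm{d}(B_\tau)=0$. When $B_\tau$ has irregular boundary points, its level curves need not surround $B_\tau$. For example, $B_\tau=\{|z|\le 1\}\cup\{2\}$ satisfies hypothesis (ii.), and its Green's function is $\log|z|$. The level curves near $B_\tau$ are then the circles $|z|=e^{\delta}$, which miss the point $2$. Since $\tau(\Theta)$ may be singular at $2$ (take $\Theta=1/(T-2)$), the integral $\frac{1}{2\pi i}\oint_\Gamma \tau(\Theta)z^m\,dz$ is then $0$ rather than $\tau(a_{m+1})$.

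The repair is standard. Pick a monic $Q$ of degree $N$ with $\sup_{B_\tau}|Q|^{1/N}<r:=\mathrm{d}(B_\tau)+\varepsilon$; a Chebyshev polynomial of $B_\tau$ works, by Fekete--Szeg\H{o}. Take $\Gamma=\{|Q|=r^N\}$. This is a finite union of piecewise analytic closed curves lying in $\CC\setminus B_\tau$, and it encloses $B_\tau\subset\{|Q|<r^N\}$, so Cauchy's theorem gives the integral representation of the coefficients. Now set $P_{qN+s}=z^sQ^q$ for $0\le s<N$. These are monic of the right degrees and satisfy $\sup_\Gamma|P_i|\le C r^i$ directly, so you no longer need Chebyshev polynomials of $E$. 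With this $\Gamma$, the rest of your argument goes through unchanged.
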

\section{Equidistribution and height inequality}\label{section:equidistribution}

The Weil height of an algebraic number can be bounded from above in 
terms of the canonical Call--Silverman height by property (ii.). 
In this section we apply equidistribution to derive a sharper upper 
bound for the archimedean part of the Weil height in terms of the 
archimedean part of the canonical Call--Silverman height. 
The proof relies on two theorems. The first is an equidistribution 
theorem of Baker and Hsia~\cite{MR2164622} for points of small 
canonical height. The second, which will play a pivotal role in the 
proof of~\Cref{thm:main-thm}, is Pritsker's estimate for the Mahler 
measure of the iterates of a polynomial~\cite{MR4651642}.

\subsection{Equidistribution}

Following Baker--Hsia~\cite{MR2164622}, we say that a 
sequence~${\left(S_n\right)}_{n \ge 1}$ of finite nonempty subsets 
of~$\CC$ \textit{is equidistributed with respect to a probability 
measure}~$\mu$ on $\CC$ if for every continuous bounded function 
$u\colon \CC \rightarrow~\RR$ one has
\[
	\lim_{n\rightarrow\infty} 
	\frac{1}{\vert S_n\vert}
	\sum_{z\in S_n} u(z)
	 = \int u d\mu.
\]

Let $f \in \CC[T]$ be of degree at least $2$. The \textit{equilibrium 
measure} of the Julia set~$\mathrm{J}_f \subset \CC$ is defined as 
the Borel probability measure $\mu$ on $\mathrm{J}_f$ that minimizes 
the integral 
\[
    I(\mu) 
    = -\int_{\mathrm{J}_f}\int_{\mathrm{J}_f}
    \log\vert z - w \vert d\mu(z)d\mu(w).
\]
For its existence, see~\cite[Theorem 3.3.2]{MR1334766}. Brolin~\cite{MR0194595} 
proved that the set $S_{n,z}$ of solutions $w \in \CC$ for 
$f^n(w) = z$ counted with multiplicity is equidistributed with 
respect to $\mu_f$ for all but at most one $z \in \CC$. In other 
words, $\mu_f$ describes the asymptotic random distribution of 
preimages of nonexceptional complex numbers under iteration by $f$. 
For more details, see~\cite[Chapter~3.3]{MR1334766}. Freire, Lopes 
and Ma\~n\'e~\cite{MR0736568}, and independently Ljubich~\cite{MR0741393} 
extended Brolin's theorem to rational functions~$f\in\CC(T)$.

Let $L$ be a number field and fix an algebraic closure $\overline{L}$. 
Let $f \in L[T]$ be a polynomial of degree at least $2$. Baker and 
Hsia~\cite{MR2164622} proved that the Galois orbits of a sequence of 
algebraic numbers whose canonical height tends to zero is 
equidistributed with respect to the equilibrium measure $\mu_f$.

\begin{thm}[Baker--Hsia, {\cite[see Corollary~4.6]{MR2164622}}]\label{thm:baker-hsia}
Let $L$ be a number field, $f \in L[T]$ be a polynomial of 
degree at least $2$ and let $\mu_f$ be its equilibrium measure. 
Let ${(\alpha_n)}_{n \ge 1}$ be a sequence of pairwise distinct 
algebraic numbers such that $\lim_{n\rightarrow \infty}
\hat h_f(\alpha_n)~=~0$. 
Then, for every continuous bounded function $u \colon \CC 
\rightarrow \RR$ we have
\[
	\lim_{n\rightarrow \infty} 
	\frac{1}{[L(\alpha_n)\colon\QQ]}
	\sum_{\sigma\colon L(\alpha_n)\rightarrow \CC}
	u(\sigma(\alpha_n)) 
	= \int u d\mu_f.
\]
\end{thm}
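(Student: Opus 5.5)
The statement is quoted from Baker--Hsia~\cite[Corollary~4.6]{MR2164622}, so in the paper I would simply invoke it. To indicate why it holds, I would follow the potential-theoretic argument in the style of Bilu's equidistribution theorem, with the logarithm replaced by dynamical Green functions.

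\textbf{Local Green functions.} For each place $v$ of $L$, I would introduce
\[
g_v(z)=\lim_{n\to\infty} (\deg f)^{-n}\log^+|f^n(z)|_v .
\]
This limit exists and is continuous. At archimedean places it is the Green function of the filled Julia set with pole at infinity. Moreover, $\hat h_f(\alpha)$ equals the normalized sum over places and embeddings of $g_v(\sigma\alpha)$. Write $\delta_n$ for the uniform probability measure on the complex Galois conjugates of $\alpha_n$. By Prokhorov's theorem it suffices to prove three things:
\begin{itemize}
\item[(a)] $(\delta_n)$ is tight;
\item[(b)] every weak limit $\nu$ has energy $I(\nu)\le I(\mu_f)$, the Robin constant of $\mathrm{J}_f$;
\item[(c)] every weak limit $\nu$ is supported on the filled Julia set.
\end{itemize}
Uniqueness of the equilibrium measure then forces $\nu=\mu_f$. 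Points (a) and (c) should follow from the uniform bound $g_\infty(\sigma\alpha_n)\to 0$ on average. This bound in turn comes from $\hat h_f(\alpha_n)\to0$ together with the nonnegativity of every local $g_v$.

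\textbf{Energy estimate.} For (b), I would use the discriminant. Since the $\alpha_n$ are pairwise distinct and $\hat h_f(\alpha_n)\to0$, Northcott's theorem (via property (ii.)) gives $D_n=[L(\alpha_n)\colon\QQ]\to\infty$. Consider
\[
\prod_{i\ne j}(\sigma_i\alpha_n-\sigma_j\alpha_n).
\]
It is a nonzero rational number, so the product formula gives $\sum_v \sum_{i\neq j}\log|\sigma_i\alpha_n-\sigma_j\alpha_n|_v=0$. At each place, bound the pairwise log-distances above by the local energy pairing relative to $g_v$, namely $\log|z-w|_v\le g_v(z)+g_v(w)-\text{const}_v$. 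The constant equals $\log\mathrm{cap}$ of the $v$-adic filled Julia set, and these constants sum to zero by the product formula applied to the leading coefficient of $f$. This yields
\[
-\frac{1}{D_n^2}\sum_{i\ne j}\log|\sigma_i\alpha_n-\sigma_j\alpha_n|_\infty \le I(\mu_f)+O(\hat h_f(\alpha_n))+o(1).
\]
Passing to the limit requires truncating the logarithmic kernel near the diagonal, replacing it by $\min\{-\log|z-w|,M\}$, and then letting $M\to\infty$. The diagonal terms contribute only $O(M/D_n)\to0$, and lower semicontinuity of energy gives $I(\nu)\le I(\mu_f)$.

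\textbf{Main obstacle.} The delicate step is the nonarchimedean bound with the correct constants. It needs the local capacity of the $v$-adic filled Julia set, and the inequality $\log|z-w|_v\le g_v(z)+g_v(w)+\log|a|_v^{-1/(\deg f-1)}$, where $a$ is the leading coefficient of $f$. This must hold for all $z,w\in\CC_v$, not only at places of good reduction. I would prove it by writing $f^n(z)$ in factored form over $\CC_v$, so that $(\deg f)^{-n}\log|f^n(z)|_v$ converges to the potential of the $v$-adic equilibrium measure. At the finitely many places of bad reduction one uses the same factorization, with only the constant changing. This is exactly the content worked out in~\cite{MR2164622} via Berkovich spaces, and in the paper I would defer to that reference.
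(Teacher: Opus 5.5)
Invoking Baker--Hsia is exactly what the paper does: it gives no proof of this statement and uses it only as a black box in \Cref{lemma:lemma}. The sketch you add to explain why it holds, however, has a genuine gap at its key step.

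The gap is the two-point inequality $\log|z-w|_v\le g_v(z)+g_v(w)+\log\mathrm{cap}_v$, which you need for all $z,w\in\CC_v$ at every finite place. It is false in general at places of bad reduction. Taking $z,w$ in the $v$-adic filled Julia set $\mathcal{K}_{f,v}$, where $g_v=0$, it would force $\mathrm{diam}(\mathcal{K}_{f,v})\le\mathrm{cap}(\mathcal{K}_{f,v})$. For a concrete failure, take $p$ an odd prime and $f(T)=T^2+p^{-2}$. Its two fixed points $z_1,z_2$ have $g_p(z_1)=g_p(z_2)=0$, and $\mathrm{cap}_p=1$ since $f$ is monic. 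But $(z_1-z_2)^2=1-4p^{-2}$, so $|z_1-z_2|_p=p$.

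Factoring $f^n$ does not rescue this. It only gives the potential identity $g_v(z)=\int\log|z-w|_v\,d\mu_v(w)-\log\mathrm{cap}_v$, which is an average over $w$, not a two-point bound. Nor can you keep the pointwise bound ``with only the constant changing''. Your energy estimate needs the finite constants to add up exactly to $I(\mu_f)$. Any enlarged constant gives only $I(\nu)\le I(\mu_f)+C$ with $C>0$, which does not force $\nu=\mu_f$.

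What is missing is that at such places (as at archimedean ones) one only has a bound averaged over $D$ distinct points, uniform in the configuration:
\[
\frac{1}{D^2}\sum_{i\neq j}\log|z_i-z_j|_v\le\frac{2}{D}\sum_{i}g_v(z_i)+\log\mathrm{cap}_v+\varepsilon_v(D),\qquad \varepsilon_v(D)\to 0 .
\]
This comes either from Fekete-type estimates resting on $\mathrm{d}_D(\mathcal{K}_{f,v})\to\mathrm{cap}_v$, or from positivity of the energy of mass-zero measures after regularizing the Dirac masses. Since only finitely many places are bad, the total error is $o(1)$. This is a second, essential use of $D_n\to\infty$. For the polynomials the paper actually needs, $T^2+c$ with $c\in O_K$, every finite place has good reduction, $g_v=\log^+|\cdot|_v$ and $\mathrm{cap}_v=1$. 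There your pointwise inequality is correct.

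A second point: your sketch tacitly takes $L=\QQ$. The discriminant is ``a nonzero rational number'' and only one archimedean Green function appears. For $L\neq\QQ$, the conjugates of $\alpha_n$ lying over different embeddings $\tau\colon L\to\CC$ are governed by different polynomials $\tau(f)$. You must instead use the discriminant of $\alpha_n$ relative to $L$, which lies in $L^\times$, and the product formula over $M_L$. All archimedean places of $L$ are then handled at once by your lower-semicontinuity argument. The outcome is that, for each $\tau$, the conjugates above $\tau$ equidistribute to $\mu_{\tau(f)}$. This per-embedding form is what Baker--Hsia prove and what \Cref{lemma:lemma} uses. The full average over all embeddings of $L(\alpha_n)$, as in the displayed statement, tends to $[L:\QQ]^{-1}\sum_\tau\mu_{\tau(f)}$. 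That is a single $\mu_f$ only when these measures coincide.
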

In what follows, we will apply~\Cref{thm:baker-hsia} only on the 
archimedean places.

Favre and Rivera-Letelier~\cite{MR2221116}, Baker and Rumely~\cite{MR2244226}, 
and Chambert-Loir~\cite{MR2244803} extended these results to rational 
functions in both the archimedean and nonarchimedean cases.

\subsection{Mahler measure and iterates}

The \textit{Mahler measure} of a nonzero polynomial $f(T) = a (T - z_1) 
\cdots (T - z_d) \in \CC[T]$ is defined as $M(f) = \left|a\right| 
\prod_{i = 1}^d \max \left\{ 1, \vert z_i\vert \right\}$. Pritsker 
derived the following estimate for the exponential growth of the 
Mahler measure of the iterates of a polynomial. The proof is based on 
Brolin's theorem and the estimates in~\cite{MR2900164}.

\begin{thm}[Pritsker, {\cite[see Theorems~1.1 and~1.4]{MR4651642}}]\label{thm:pritsker}
	Let $f\in\CC[T]$ be a monic polynomial of degree at least $2$. Then,
	\begin{equation}\label{equation:pritsker-theorem}
		\lim_{n\rightarrow\infty} 
		\frac{\log M(f^{n})}{\deg (f^n)}
		= \int_{\mathrm{J}_f} \log^+ \vert z\vert d\mu_f.
	\end{equation}
	Moreover, if $\mathrm{J}_f$ is connected and if $f$ is either an 
    odd or an even function, then
	\[
		\int_{\mathrm{J}_f} \log^+ \vert z\vert d\mu_f 
		\le 2\cdot\int_{1}^2 \frac{\log t}{\pi\sqrt{4-t^2}}dt
		= 0.3230\dots.
	\]
\end{thm}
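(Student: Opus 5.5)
The statement is quoted from Pritsker's work \cite{MR4651642}, so the plan below reconstructs his argument rather than an argument special to this paper. The first assertion should follow from Brolin's theorem. The second reduces to an extremal problem in potential theory on sets of capacity one.

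\emph{The limit formula.} Write $D = \deg f$. Since $f$ is monic, $f^n$ is monic of degree $D^n$, so
\[
	\frac{\log M(f^n)}{D^n} = \frac{1}{D^n}\sum_{f^n(w) = 0} \log^+|w|,
\]
with roots counted with multiplicity. This is exactly the average of $\log^+|z|$ over the set $S_{n,0}$ of solutions of $f^n(w) = 0$. By Brolin's theorem, $S_{n,0}$ is equidistributed with respect to $\mu_f$ unless $0$ is the exceptional point.

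\begin{itemize}
	\item[(a)] \emph{Boundedness of the test function.} The function $\log^+|z|$ is continuous but unbounded. However, all points of $\bigcup_n S_{n,0}$ lie in a fixed disk: take $R$ so large that $|f(z)| > |z|$ for $|z| > R$, and $R \ge |0|$ trivially. Truncating $\log^+|z|$ outside that disk gives a bounded continuous function with the same averages, so equidistribution applies.
	\item[(b)] \emph{The exceptional case.} For a monic polynomial, an exceptional point exists only if $f(z) = (z-a)^D + a$, and then the exceptional point is $a$. So $0$ is exceptional only when $f(z) = z^D$. In that case $M(f^n) = 1$, and $\mathrm{J}_f$ is the unit circle, on which $\log^+$ vanishes. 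The formula therefore holds trivially.
\end{itemize}

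\emph{The bound under connectedness and symmetry.}
\begin{itemize}
	\item[(c)] \emph{Reduction to an extremal problem.} Since $f$ is monic, the logarithmic capacity of $\mathrm{J}_f$ is $1$, and $\mu_f$ is its equilibrium measure. If $f$ is even or odd, then $-\mathrm{J}_f = \mathrm{J}_f$. The claim thus reduces to the following. For every connected compact $E \subset \CC$ with capacity $1$ and $E = -E$, one has $\int \log^+|z|\, d\mu_E \le \int \log^+|z|\, d\mu_{[-2,2]}$.
	\item[(d)] \emph{The extremal value.} The segment $[-2,2]$ has capacity $1$, and its equilibrium measure is the arcsine density $dt/(\pi\sqrt{4-t^2})$. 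Hence the right-hand side equals $2\int_1^2 \log t \,/\, (\pi\sqrt{4-t^2})\,dt$, the constant in the statement.
	\item[(e)] \emph{Proving the extremality.} I would use the estimates of \cite{MR2900164}. The key identity is
	\[
		\int \log^+|z|\, d\mu_E = \frac{1}{2\pi}\int_0^{2\pi} g_E(e^{i\theta})\, d\theta,
	\]
	where $g_E$ is the Green function with pole at infinity. This holds because capacity $1$ gives $U^{\mu_E}(z) = g_E(z)$, and the left side equals the average of $U^{\mu_E}$ over the unit circle. One then has to show that this circle average of $g_E$ is maximized by $[-2,2]$ among symmetric continua of capacity one. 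I would first try Dubinin-type symmetrization or polarization: pass to $\sqrt{\,\cdot\,}$-coordinates using the $z \mapsto -z$ symmetry, apply circular symmetrization (which increases Green functions on circles while preserving capacity), and check that the extremal configuration is the segment. Connectedness is essential, since otherwise capacity-one sets can push mass far out.
\end{itemize}

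The main obstacle is step (e). Making the symmetrization argument rigorous, and in particular showing that the symmetric segment rather than some other continuum is extremal, is the genuinely nontrivial part, and there I would rely on \cite{MR2900164}. The numerical value $0.3230\dots$ is a routine evaluation of the resulting integral.
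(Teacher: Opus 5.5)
Your outline follows the route behind the cited result. The paper proves nothing here: it quotes Pritsker, whose argument is Brolin's theorem plus the estimates of \cite{MR2900164}. Steps (a)--(d) are correct. All zeros of $f^n$ lie inside the escape radius. The only monic case with $0$ exceptional is $f=T^D$, where both sides vanish. The symmetric continuum $\mathrm{J}_f$ has capacity one and equilibrium measure $\mu_f$. The identity in (e) is also right: it follows from Fubini and $\frac{1}{2\pi}\int_0^{2\pi}\log|e^{i\theta}-w|\,d\theta=\log^+|w|$. Deferring the extremal inequality to \cite{MR2900164} is exactly what the paper does.

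The fallback you sketch for (e), however, rests on a false claim and would not work. Circular symmetrization does not preserve logarithmic capacity; it can lower it. For example, $[-2,2]$ symmetrizes to $[0,2]$, which has capacity $1/2$. At infinity $g_A(z)=\log|z|-\log\mathrm{cap}(A)+o(1)$. So part of the increase of the Green function under symmetrization is just this loss of capacity, and the symmetrized set can no longer be compared with the capacity-one segment.

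Concretely, take the cross $E=[-\sqrt2,\sqrt2]\cup i[-\sqrt2,\sqrt2]$. It is connected, satisfies $E=-E$, has capacity one, and $E^2=[-2,2]$. Symmetrizing in the variable $u=z^2$ gives $[0,2]$. The resulting bound is $\frac12\cdot\frac1{2\pi}\int_0^{2\pi}g_{[0,2]}(e^{i\theta})\,d\theta\approx 0.46$, which exceeds $0.3230\dots$ and even $\frac12\log 2$.

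What does work is to compare integral means of the exterior conformal map $F(w)=w+b_1w^{-1}+b_3w^{-3}+\cdots$ of $E$. This map is odd because $E=-E$, the capacity normalization is built into it, and $\int\log^+|z|\,d\mu_E=\frac1{2\pi}\int_0^{2\pi}\log^+|F(e^{i\theta})|\,d\theta$. The function $G(u)=F(\sqrt u)^2=u+2b_1+\cdots$ is univalent in $|u|>1$. It omits $0$, since $F$ is odd and univalent. Apply Baernstein's integral-means theorem for this class, whose extremal is $u+2+u^{-1}$, i.e.\ $F(w)=w+w^{-1}$ and $E=[-2,2]$. Taking $\Phi(t)=\max\{0,t/2\}$ and letting $r\to1^+$ gives exactly the inequality you need. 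So (e) genuinely requires this kind of input, not a symmetrization of the set itself.
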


\begin{remark}
The filled Julia set of $f \in \CC[T]$ is connected if and only if
it contains all the critical points of $f$~\cite[see Theorem~17.3]{MR2193309}. 
If $f(T) = T^2 + c \in \CC[T]$ is PCF, it follows that $\mathrm{J}_f$ 
is connected.
\end{remark}


As a consequence of~\Cref{thm:baker-hsia,thm:pritsker}, we obtain an 
upper bound for the archimedean part of the Weil height with a better 
constant.

For a dynamical system $f$ defined over $\CC$, the \textit{canonical 
local height} of $z \in \CC$ is defined as
\[
	\hat \lambda_{f}(z) =
	\lim_{n \rightarrow \infty}
	\frac{\log^+|f^n(z)|}{\deg f^{n}}
\]	
where $\log^+|z| = \log\max\{1, |z|\}$.

Let $L$ be a number field and suppose that $f \in L[T]$. For $\alpha 
\in \overline L$, define the \textit{archimedean part of the Weil 
height} as
\[
	h^\infty(\alpha) = 
	\frac{1}{[L(\alpha) \colon \QQ]} 
    \sum_{\tau \colon L(\alpha) \rightarrow \CC}
	\log^+|\tau(\alpha)|,
\]
the \textit{archimedean part of the canonical height as}
\[
	\hat h_f^\infty(\alpha) = 
	\frac{1}{[L(\alpha) \colon \QQ]} 
    \sum_{\tau \colon L(\alpha) \rightarrow \CC}
	\hat \lambda_{\tau(f)}(\tau(\alpha)).
\]
Define also the \textit{nonarchimedean part of the Weil height} as 
\[
    h^0(\alpha) = h(\alpha) - h^\infty(\alpha)
\]
and the \textit{nonarchimedean part of the canonical height} as
\[
    \hat h_f^0(\alpha) = \hat h_f(\alpha) - \hat h_f^\infty(\alpha).
\]
If $f \in O_L[T]$ is monic, it turns out that
\[
    \hat h_f^0(\alpha)
    = \frac{1}{[L(\alpha) \colon \QQ]} 
    \sum_{v \in M_{L(\alpha)}^0} d_v \log^+|\alpha|_v 
    = h^0(\alpha),
\]
see~\cite[Theorem~5.61 and Remark~5.62]{MR2316407}.

In what follows, for every embedding $\tau \colon L \rightarrow \CC$, 
we write $I_\tau = I_{\tau(f)} = \int_{J_{\tau(f)}} \log^+|z| 
\mathrm{d}\mu_{\tau(f)}$ and $I = \max_{\tau} I_\tau$.

\begin{remark}\label{remark:I}
If $f(T) = T^2 + c \in L[T]$ is PCF, then every conjugate $\tau(f)$ 
is again PCF. So $\mathrm{J}_{\tau(f)}$ is connected and $\tau(f)$ 
is even. Hence, the second part of~\Cref{thm:pritsker} implies that 
\[
    I_\tau \le 2\cdot\int_{1}^2 \frac{\log t}{\pi\sqrt{4-t^2}}dt = 0.3230\dots
\]
and therefore the same holds true for $I$.
\end{remark}

\begin{lemma}\label{lemma:lemma}
Let $L$ be a number field and let $f \in O_L[T]$ be a nonconstant 
polynomial of degree at least $2$. Fix an embedding $\tau \colon L 
\rightarrow \CC$. For every $\eta > 0$ there exists $\delta = 
\delta(\tau, \eta) > 0$ such that for every $\alpha \in \overline L$ 
for which $\hat h_f(\alpha) < \delta$ and $[\QQ(\alpha) \colon \QQ] > 
\delta^{-1}$, we have
\[
    \frac{1}{[L(\alpha) \colon L]}
    \sum_{\substack{\sigma \colon L(\alpha) \rightarrow \CC\\\sigma|_L = \tau}}
    \left(
        \log^+|\sigma(\alpha)| - \hat \lambda_{\tau(f)}(\sigma(\alpha))
    \right)
    < I_\tau + \eta.
\]
\end{lemma}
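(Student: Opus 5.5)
Argue by contradiction with a sequence, combining \Cref{thm:baker-hsia} with the first part of \Cref{thm:pritsker}. Fix $\tau$ and $\eta>0$. Suppose no $\delta$ works. Then there are algebraic numbers $\alpha_n$ with $\hat h_f(\alpha_n)\to 0$ and $[\QQ(\alpha_n)\colon\QQ]\to\infty$ that violate the inequality. Since the degrees tend to infinity, we may pass to a subsequence of pairwise distinct $\alpha_n$. The quantity on the left is the average over the $\sigma$ extending $\tau$ of the function $u(z)=\log^+|z|-\hat\lambda_{\tau(f)}(z)$. This function is continuous on $\CC$: $\hat\lambda_{\tau(f)}$ is the Green function of the filled Julia set, which is continuous. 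It is also bounded, since $\hat\lambda_{\tau(f)}(z)=\log^+|z|+O(1)$ for monic $f$. The leading coefficient is a unit, and we may assume $f$ monic in the application; the general case only adds the constant $\log|a|/(D-1)$.

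The first point is that \Cref{thm:baker-hsia} averages over all embeddings of $L(\alpha_n)$, whereas here we only average over those restricting to $\tau$. I would handle this by applying Baker--Hsia over the number field $\tau(L)$, viewed as the base field with its given complex embedding, to the polynomial $\tau(f)$. Concretely, the conjugates $\sigma(\alpha_n)$ with $\sigma|_L=\tau$ are the roots of $\tau(P_n)$, where $P_n$ is the minimal polynomial of $\alpha_n$ over $L$. Baker--Hsia over $L$ equidistributes the full Galois orbit of $\alpha_n$ over $\QQ$ with respect to $\mu_{\tau'(f)}$ for each $\tau'$. More precisely, the arguments of \cite{MR2164622} are local at a single archimedean place. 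I would invoke that local version at the place of $L$ induced by $\tau$, to get
\[
\frac{1}{[L(\alpha_n)\colon L]}\sum_{\sigma|_L=\tau}u(\sigma(\alpha_n))\longrightarrow \int u\,d\mu_{\tau(f)}.
\]
This restriction-to-one-place issue is where I expect the main care to be needed.

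Next compute the limit. The Green function $\hat\lambda_{\tau(f)}$ vanishes on $\mathrm{J}_{\tau(f)}$, which is the support of $\mu_{\tau(f)}$. Hence $\int u\,d\mu_{\tau(f)}=\int\log^+|z|\,d\mu_{\tau(f)}=I_\tau$. By the first part of \Cref{thm:pritsker} this integral is also the Mahler-measure limit, but only the integral expression is needed here. Therefore, for $n$ large, the average is less than $I_\tau+\eta$, contradicting the choice of the $\alpha_n$.

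One subtlety remains: \Cref{thm:baker-hsia} as stated uses test functions that are continuous and bounded on $\CC$, and $u$ qualifies. So the only real obstacle is justifying the single-embedding equidistribution. A fallback is to choose $L$ Galois and note that the $\alpha_n$ of small height can be replaced by their $\Gal(\overline{\QQ}/\QQ)$-conjugates under an element sending $\tau$ to any other embedding. Since $\tau$ is fixed in the lemma, however, the local statement of Baker--Hsia at the place $\tau$ is the cleanest route.
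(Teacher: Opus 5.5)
Your proposal is correct and follows the paper's proof essentially step for step: you take a contradiction sequence with $\hat h_f\to 0$ and degrees tending to infinity, apply Baker--Hsia to the bounded continuous function $u_\tau(z)=\log^+|z|-\hat\lambda_{\tau(f)}(z)$, and evaluate $\int u_\tau\,d\mu_{\tau(f)}=I_\tau$ using that $\hat\lambda_{\tau(f)}$ vanishes on the support of $\mu_{\tau(f)}$. The single-embedding issue you flag is real, since the paper's \Cref{thm:baker-hsia} as stated averages over all embeddings and the paper passes over this; it is resolved as you say, because Baker--Hsia's equidistribution is proved for a fixed place of the base field with the $\Gal(\overline L/L)$-orbit embedded via that place (your side claim that the leading coefficient of $f\in O_L[T]$ is a unit is false in general, but harmless, since $u_\tau$ is bounded for every polynomial of degree at least $2$).
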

\begin{proof}
Define $u_\tau(z) = \log^+|z| - \hat \lambda_{\tau(f)}(z)$. First 
observe that $u_\tau$ is bounded and continuous by~\cite[Theorem 5.60(c)]{MR2316407}.
Suppose that the statement is false. So there exists $\tilde\eta > 0$ 
such that for all $\delta > 0$ there exists $\beta \in \overline L$ 
for which $\hat h_f(\beta) < \delta$, $[\QQ(\beta) \colon \QQ] > 
\delta^{-1}$ but 
\[
    \frac{1}{[L(\beta) \colon L]}
    \sum_{\substack{\sigma \colon L(\beta) \rightarrow \CC\\\sigma|_L = \tau}}
    u_{\tau}(\sigma(\beta))
    \ge I_\tau + \tilde\eta.
\]
Then set ${(\delta_n)}_{n \ge 1} = (1/n)_{n \ge 1}$. For each 
$\delta_n$, choose $\beta_n$ as above such that $\hat h_f(\beta_n) < 1/n$ 
and $[\QQ(\beta_n) \colon \QQ] > n$. 
Thus, up to discarding repeated elements, we find a sequence of 
pairwise distinct elements whose height tends to zero. This sequence 
contradicts~\Cref{thm:baker-hsia}. Indeed, on the one hand, we have 
\begin{equation*}\label{eqn:eqn2}
    \int u_{\tau} \mathrm{d}\mu_{\tau(f)}
    = \int \left( \log^+|z| - \hat \lambda_{\tau(f)}(z) \right) \mathrm{d}\mu_{\tau(f)}
    = \int \log^+|z| \mathrm{d}\mu_{\tau(f)}
    = I_\tau
\end{equation*}
where the second equality follows from the fact that $\mu_{\tau(f)}$ 
is supported on the filled Julia set of $\tau(f)$ while $\hat \lambda_{\tau(f)}$ 
vanishes on it. On the other hand,
\[
    \lim_{n \rightarrow \infty} 
    \frac{1}{[L(\beta_n) \colon L]}
    \sum_{\substack{\sigma \colon L(\beta_n) \rightarrow \CC\\\sigma|_L = \tau}}
    u_{\tau}(\sigma(\beta_n))
    \ge I_\tau + \tilde\eta
    > \int u_\tau \mathrm{d}\mu_{\tau(f)},
\]
a contradiction to~\Cref{thm:baker-hsia}.
\end{proof}

\begin{thm}\label{thm:weil-to-call-silverman-inequality}
Let $L$ be a number field and $f\in O_L[T]$ be a nonconstant monic 
polynomial of degree at least $2$. For every $\eta > 0$ there 
exists $\delta > 0$ such that for every $\alpha \in \overline{L}$ for 
which $\hat h_f(\alpha) < \delta$ and $[\QQ(\alpha)\colon\QQ] > 
\delta^{-1}$, we have 
\[
	h(\alpha) < \hat h_f(\alpha) + I + \eta.
\] 
\end{thm}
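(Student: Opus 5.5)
We will derive \Cref{thm:weil-to-call-silverman-inequality} by splitting both heights into archimedean and nonarchimedean parts and applying \Cref{lemma:lemma} once for each embedding $\tau \colon L \rightarrow \CC$. Since $f \in O_L[T]$ is monic, the paper has already recorded that $\hat h_f^0(\alpha) = h^0(\alpha)$. Hence
\[
    h(\alpha) - \hat h_f(\alpha) = h^\infty(\alpha) - \hat h_f^\infty(\alpha),
\]
and it suffices to show $h^\infty(\alpha) - \hat h_f^\infty(\alpha) < I + \eta$.

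Next we group the embeddings of $L(\alpha)$ according to their restriction to $L$. Every $\sigma \colon L(\alpha) \rightarrow \CC$ restricts to a unique $\tau \colon L \rightarrow \CC$, and each $\tau$ has exactly $[L(\alpha) \colon L]$ extensions. For $\sigma|_L = \tau$ we have $\sigma(f) = \tau(f)$, so $\hat\lambda_{\sigma(f)}(\sigma(\alpha)) = \hat \lambda_{\tau(f)}(\sigma(\alpha))$. Using $[L(\alpha)\colon \QQ] = [L(\alpha)\colon L][L \colon \QQ]$, this gives
\[
    h^\infty(\alpha) - \hat h_f^\infty(\alpha)
    = \frac{1}{[L\colon\QQ]} \sum_{\tau} \frac{1}{[L(\alpha)\colon L]}
    \sum_{\substack{\sigma \colon L(\alpha) \rightarrow \CC\\ \sigma|_L = \tau}}
    \left(\log^+|\sigma(\alpha)| - \hat\lambda_{\tau(f)}(\sigma(\alpha))\right).
\]

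Given $\eta > 0$, \Cref{lemma:lemma} supplies $\delta(\tau, \eta)$ for each of the finitely many $\tau$. Set $\delta = \min_\tau \delta(\tau,\eta) > 0$. If $\hat h_f(\alpha) < \delta$ and $[\QQ(\alpha)\colon\QQ] > \delta^{-1}$, then both hypotheses hold for every $\tau$: we have $\hat h_f(\alpha) < \delta(\tau,\eta)$, and $[\QQ(\alpha)\colon \QQ] > \delta^{-1} \ge \delta(\tau,\eta)^{-1}$. So each inner average is $< I_\tau + \eta \le I + \eta$. Averaging over the $[L\colon\QQ]$ embeddings $\tau$ gives $h^\infty(\alpha) - \hat h_f^\infty(\alpha) < I + \eta$, and therefore $h(\alpha) < \hat h_f(\alpha) + I + \eta$.

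There is no real obstacle in this argument. The only points needing care are the two identifications already justified in the paper: the equality of the nonarchimedean parts, which uses that $f$ is monic with integral coefficients, and the fact that the local canonical height attached to $\sigma$ is that of $\tau(f)$.
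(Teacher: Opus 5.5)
Your proposal is correct and follows essentially the same route as the paper: you group the embeddings of $L(\alpha)$ by their restriction to $L$, apply \Cref{lemma:lemma} for each $\tau$ with $\delta = \min_\tau \delta(\tau,\eta)$, bound each inner average by $I_\tau + \eta \le I + \eta$, and use the equality $h^0(\alpha) = \hat h_f^0(\alpha)$, which holds for monic $f \in O_L[T]$. The only difference is that you invoke the nonarchimedean equality at the start, whereas the paper uses it at the end; this is purely cosmetic.
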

\begin{proof}
Write the archimedean part of the Weil height of $\alpha$ as
\begin{align}\label{eqn:eqn}
    h^\infty(\alpha) 
    &= \frac{1}{[L(\alpha) \colon \QQ]} 
    \sum_{\tau \colon L \rightarrow \CC}
    \sum_{\substack{\sigma \colon L(\alpha) \rightarrow \CC\\\sigma|_L = \tau}}
    \log^+|\sigma(\alpha)|.
\end{align}
Fix $\eta > 0$. For each embedding $\tau \colon L \rightarrow \CC$, 
apply~\Cref{lemma:lemma}. This implies that there exists $\delta = 
\min_\tau \delta(\tau)$ such that if $\hat h_f(\alpha) < \delta$ and 
$[\QQ(\alpha) \colon \QQ] > \delta^{-1}$, then
\[
    \frac{1}{[L(\alpha) \colon L]}
    \sum_{\substack{\sigma \colon L(\alpha) \rightarrow \CC\\\sigma|_L = \tau}}
    \log^+|\sigma(\alpha)|
    < \frac{1}{[L(\alpha) \colon L]}
    \sum_{\substack{\sigma \colon L(\alpha) \rightarrow \CC\\\sigma|_L = \tau}}
    \hat\lambda_{\tau(f)}(\sigma(\alpha)) + I + \eta.
\]
From~\eqref{eqn:eqn} and the last inequality we conclude that
\[
    h^\infty(\alpha)
    < \frac{1}{[L \colon \QQ]} 
    \sum_{\tau \colon L \rightarrow \CC}
    \left(
    \frac{1}{[L(\alpha) \colon L]}
    \sum_{\substack{\sigma \colon L(\alpha) \rightarrow \CC\\\sigma|_L = \tau}}
    \hat\lambda_{\tau(f)}(\sigma(\alpha)) + I + \eta
    \right)
    = \hat h_f^\infty(\alpha) + I + \eta.
\]

Since $f \in O_L[T]$ is monic, the finite part of the Weil height 
equals the finite part of the Call--Silverman height. Hence, 
\[
    h(\alpha) = h^\infty(\alpha) + h^0(\alpha)
              < \hat h_f^\infty(\alpha) + I + \eta + \hat h_f^0(\alpha)
              = \hat h_f(\alpha) + I + \eta.
\]
The proof follows.\qedhere
\end{proof}
\section{Proofs}\label{section:proof}

In this section, we prove~\Cref{thm:main-thm,thm:main-thm-unr,thm:main-thm-ram}. 
All three results follow from~\Cref{thm:main-technical-thm}, which 
shows that the existence of suitable Galois automorphisms implies a 
lower bound for the canonical height of wandering points.

\begin{thm}\label{thm:main-technical-thm}

Let $K$ be a number field unramified above $2$. Let $\zeta$ be a root 
of unity. Let $f(T) = T^2 + c \in O_K[T]$ be PCF.
Let $(\sigma,(h,k))$ be as in~\eqref{eqn:sigma-notation}.
There exists $\varepsilon = \varepsilon(K, f) > 0$ such that 
$\hat{h}_f(\alpha) \ge \varepsilon$ for all wandering $\alpha \in 
K(\zeta)$ for which $\sigma (f^h(\alpha)) \ne f^k(\alpha)$.
\end{thm}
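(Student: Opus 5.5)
We plan to apply the P\'olya--Bertrandias theorem (\Cref{thm:polya-bertrandias}) to the series $\Phi$ of~\eqref{eqn:power-series}, with $L = K(\zeta)$, and to use the fact that $\Phi$ is \emph{not} rational. This gives a lower bound for a sum of Weil heights of two iterates of $\alpha$. \Cref{thm:weil-to-call-silverman-inequality} then turns it into a lower bound for $\hat h_f(\alpha)$.

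\textbf{Step 1: nonrationality.} Write $a = f^k(\alpha)$ and $b = \sigma(f^h(\alpha))$, so $a \neq b$ by hypothesis. If $\Phi$ were in $L(T)$, then $(T-a)/(T-b)$ would be the square of a rational function. This is impossible, since $T - a$ would then occur with even multiplicity, whereas it has multiplicity one. Hence the hypotheses of \Cref{thm:polya-bertrandias} cannot all hold together with the strict inequality.

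Conditions (i.)--(iii.) of that theorem do hold:
\begin{itemize}
\item condition (i.) is \Cref{lemma:coefficients};
\item condition (ii.) holds with $B_\tau = [\tau(a), \tau(b)]$ by \Cref{lemma:archimedean-extension};
\item condition (iii.) holds with the radii $R_v$ of~\eqref{eqn:nonarchimedean-convergence-radii} by \Cref{lemma:nonarchimedean-extension}.
\end{itemize}
Therefore $\prod_\tau \mathrm{d}(B_\tau) \prod_{v \in S} R_v^{d_v} \ge 1$. Using $\mathrm{d}([z,w]) = |z-w|/4$ and taking logarithms, we get
\[
    D \log 4 \le \sum_{\tau} \log|\tau(a) - \tau(b)| + \sum_{v \in S} d_v \log R_v,
    \qquad D = [L \colon \QQ].
\]

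\textbf{Step 2: local bookkeeping.} At each archimedean place we bound $\log|\tau(a)-\tau(b)| \le \log 2 + \log^+|\tau(a)| + \log^+|\tau(b)|$. Since $\sigma$ permutes the embeddings, summing over $\tau$ gives at most $D\bigl(\log 2 + h^\infty(f^k(\alpha)) + h^\infty(f^h(\alpha))\bigr)$.

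For $v \in S$ not above $2$, we use $\log R_v \le \log^+|b|_v + \log^+|a|_v$.

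For $v$ above $2$, we have $\log R_v = \log^+|b|_v + 2^k \log^+|\alpha|_v$. Since $c$ is integral (\Cref{lemma:integrality-of-c}), the ultrametric inequality gives $2^k\log^+|\alpha|_v = \log^+|f^k(\alpha)|_v$.

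Summing all nonarchimedean terms, again using that $\sigma$ permutes places, gives at most $D\bigl(h^0(f^k(\alpha)) + h^0(f^h(\alpha))\bigr)$. Combining everything, we obtain
\[
    h(f^k(\alpha)) + h(f^h(\alpha)) \ge \log 2 .
\]

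\textbf{Step 3: equidistribution.} Fix $\eta > 0$ with $2I + 2\eta < \log 2$. This is possible because $2I \le 0.646\ldots < 0.693\ldots$ by \Cref{remark:I}; this is exactly where Pritsker's constant is crucial. Let $\delta$ be as in \Cref{thm:weil-to-call-silverman-inequality}, with $L = K$. Note that $h$ and $k$ are bounded in terms of $K$ and $f$ only, since $d$, $n$, $m$ depend only on these.

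Suppose first that for both $j \in \{h, k\}$ we have $\hat h_f(f^j(\alpha)) = 2^j \hat h_f(\alpha) < \delta$ and $[\QQ(f^j(\alpha)) \colon \QQ] > \delta^{-1}$. Then \Cref{thm:weil-to-call-silverman-inequality} gives
\[
    \log 2 < (2^h + 2^k)\hat h_f(\alpha) + 2I + 2\eta,
\]
which is a positive lower bound for $\hat h_f(\alpha)$.

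Otherwise, some iterate $f^j(\alpha)$ has degree at most $\delta^{-1}$ and canonical height below $\delta$. By Northcott's theorem and property (ii.), such iterates lie in a finite set depending only on $K$, $f$ and $\delta$. Since $\alpha$ is wandering, $f^j(\alpha)$ is wandering, so it has positive canonical height. Hence $\hat h_f(\alpha) = 2^{-j}\hat h_f(f^j(\alpha))$ is at least $2^{-\max(h,k)}$ times the minimum of the positive heights in that finite set. Taking $\varepsilon$ to be the smallest of these bounds (and $\delta 2^{-\max(h,k)}$) proves the theorem.

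The main obstacle is Step 2. The bound must come out with the exact constant $\log 2$ and no loss at the places above $2$. The factor $1/4$ from \Cref{lemma:2-adic-gain} is designed to cancel the $\log 4$, and one must check carefully that the $2$-adic radii $R_v$ are matched precisely by the nonarchimedean parts of $h(f^k(\alpha))$ and $h(f^h(\alpha))$.
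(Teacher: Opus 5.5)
Your proposal is correct and follows the paper's proof essentially step for step. It applies P\'olya--Bertrandias to $\Phi$, with segments at the archimedean places and the disks of radii $R_v$ elsewhere, to get $h(f^h(\alpha)) + h(f^k(\alpha)) \ge \log 2$. It then uses the Baker--Hsia/Pritsker height inequality together with Northcott's theorem to turn this into a lower bound for $\hat h_f(\alpha)$ that does not depend on $\zeta$. The differences are cosmetic: you justify the nonrationality of $\Phi$ explicitly, which the paper only asserts, and you apply Northcott to the iterates $f^j(\alpha)$ rather than to $\alpha$. Your ``otherwise'' case should also allow $\hat h_f(f^j(\alpha)) \ge \delta$, but your choice $\varepsilon \le \delta\, 2^{-\max(h,k)}$ already covers that.
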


\begin{proof}
Define as in~\eqref{eqn:power-series} the power series 
\[
    \Phi^2 = \frac{T - f^k(\alpha)}{T - \sigma(f^h(\alpha))}
\]
where $k$, $h$ and $\sigma$ are as in~\eqref{eqn:sigma-notation}. We 
check that $\Phi$ satisfies the hypotheses (i.), (ii.) and (iii.) of 
\Cref{thm:polya-bertrandias}. \Cref{lemma:coefficients} implies that 
$\Phi \in O_S[\![1/T]\!]$ where 
\[
    S = \left\{
        v \in M_{K(\zeta)}^0 \colon 
        |\alpha|_v > 1 \text{ or } |\sigma(\alpha)|_v > 1
    \right\} 
    \cup \{
        v \in M_{K(\zeta)}^0 \colon 
        v \text{ lies above } 2
    \}.
\]
Thus, $\Phi$ satisfies point (i.). Let $\tau \colon K(\zeta) 
\rightarrow \CC$ be an embedding. Then the function
\[
    \tau(\Phi)^2 = \frac{T - \tau (f^k(\alpha))}{T - \tau\sigma (f^h(\alpha))}.
\]
extends holomorphically to the complement of $B_\tau = 
[\tau\sigma (f^h(\alpha)), \tau (f^k(\alpha))]$ by
\Cref{lemma:archimedean-extension}. Thus, $\Phi$ satisfies point 
(ii.). Finally, for all $v \in S$, \Cref{lemma:nonarchimedean-extension} 
implies that $\Phi$ extends analytically to the complement of the 
$v$-adic ball $B_v$ of radius $R_v$, with $R_v$ as 
in~\eqref{eqn:nonarchimedean-convergence-radii}. So (iii.) holds. 
Since we are assuming that $\sigma(f^h(\alpha)) \neq f^k(\alpha)$, 
the function $\Phi$ is not rational. Thus, the contrapositive 
of~\Cref{thm:polya-bertrandias} implies that
\begin{equation}\label{eqn:eqn1}
    0 \le \sum_{\tau \colon K(\zeta) \rightarrow \CC} 
    \log \mathrm{d}\left([\tau\sigma (f^h(\alpha)), \tau (f^k(\alpha))]\right)
    + \sum_{v \in S} d_v \log R_v.
\end{equation}

Recall that the transfinite diameter of a segment in $\CC$ is one 
fourth of its Euclidean length. Note that for all positive $r, s \in 
\RR$, we have 
\[
    r + s 
    \le 2 \max\{ r, s \} 
    \le 2\max\{1, r\} \cdot \max\{1,s\}.
\]    
Using the triangle inequality and the above remarks, we obtain
\begin{multline}\label{eqn:eqn3}
    \sum_{\tau \colon K(\zeta) \rightarrow \CC} 
    \log \mathrm{d}([\tau\sigma (f^h(\alpha)), \tau (f^k(\alpha))])\\
    \le \sum_{\tau \colon K(\zeta) \rightarrow \CC} 
    \left( 
        \log^+|\tau\sigma (f^h(\alpha))|
        + \log^+ |\tau (f^k(\alpha))|
        - \log 2 
    \right).
\end{multline}


As for the second sum in~\eqref{eqn:eqn1}, we distinguish two cases. 
If $v$ divides $2$, then
\[
      \log R_v
    = \log^+|\sigma(f^h(\alpha))|_v + 2^k \log^+|\alpha|_v.
\]
The ultrametric inequality and the integrality of $c$ imply that $2 
\log^+|z|_v = \log^+|f(z)|_v$ for all $z \in \CC_v$ and all 
nonarchimedean places $v \in S$. Indeed, if $|z|_v \le 1$, then $|z^2 
+ c|_v \le \max\{|z^2|_v, |c|_v\} \le 1$ while if $|z|_v > 1$, then 
$|z^2 + c|_v = |z|_v^2$. 
An induction yields $2^k \log^+|z|_v = \log^+|f^k(z)|_v$ for all~$k 
\in \NN$. Therefore we obtain
\begin{equation}\label{eqn:vdiv2}
    \log R_v 
    = \log^+|\sigma(f^h(\alpha))|_v + \log^+|f^k(\alpha)|_v.
\end{equation}

If $v$ does not divide $2$, then 
\begin{equation}\label{eqn:vndiv2}
    \log R_v
    \le \log^+|\sigma(f^h(\alpha))|_v + \log^+|f^k(\alpha)|_v,
\end{equation}
where the inequality follows since 
\[
    \max\left\{
		\left|\sigma(f^h(\alpha))\right|_v,
		\left|f^k(\alpha)\right|_v
	\right\}
    \le 
    \max\left\{ 
        1,
		\left|\sigma(f^h(\alpha))\right|_v
    \right\} \cdot 
    \max\left\{
        1, 
		\left|f^k(\alpha)\right|_v
	\right\}.
\]

Substitute~\eqref{eqn:vdiv2} and~\eqref{eqn:vndiv2} into the second 
sum of~\eqref{eqn:eqn1} to obtain
\begin{equation}\label{eqn:finite-part}
    \sum_{v \in S} d_v \log R_v
    \le \sum_{v \in S} \left(
        d_v \log^+|\sigma(f^h(\alpha))|_v + d_v \log^+|f^k(\alpha)|_v
    \right).
\end{equation}

Substitute~\eqref{eqn:eqn3} and~\eqref{eqn:finite-part} into~\eqref{eqn:eqn1} 
and enlarge the sum over $S$ to all nonarchimedean places to conclude 
that
\begin{align}\label{eqn:polya-ineq}
    0 
    & \le [K(\zeta) \colon \QQ] \left(
        h(f^h(\alpha)) + h(f^k(\alpha)) - \log 2
    \right).
\end{align}
Note that we have used the Galois-invariance of the Weil height to 
eliminate~$\sigma$.

Now fix $\eta = (\log2 - 2I)/4$ where $I$ was defined before~\Cref{remark:I}. 
Since $f$ is even and PCF, the second part of~\Cref{thm:pritsker} 
implies that $\log2 - 2I > 0$, and so $\eta > 0$. Let $\delta = 
\delta(\eta)$ be as in~\Cref{thm:weil-to-call-silverman-inequality} 
applied with $L = K$ to $\eta$. To proceed we may assume that 
$\hat h_f(\alpha) < \delta/2^k$ as we may assume $\varepsilon < 
\delta/2^k$.

First, suppose $[\QQ(\alpha)\colon\QQ] \le d 2^k\delta^{-1}$, where 
$d = [K \colon \QQ]$. Since the canonical height of $\alpha$ is 
assumed to be bounded, it follows from property (ii.) of the 
canonical height that the Weil height of $\alpha$ is bounded as well. 
By Northcott's theorem, $\alpha$ is in a finite set. We may assume 
that $\varepsilon$ is strictly less than the minimal positive 
canonical height of an element in the set containing $\alpha$.

Second, suppose that $[\QQ(\alpha)\colon\QQ] > d 2^k\delta^{-1}$. 
Since $[K(\alpha) \colon K(f^k(\alpha))] \le 2^k$, it follows that
$[\QQ(\alpha) \colon \QQ] \le [K(\alpha) \colon \QQ] \le 2^k 
[K(f^k(\alpha)) \colon \QQ] \le 2^k d [\QQ(f^k(\alpha)) \colon \QQ]$. 
Hence $[\QQ(f^k(\alpha)) \colon \QQ] \ge \delta^{-1}$. Similarly 
$[\QQ(f^{h}(\alpha))\colon\QQ]\ge \delta^{-1}$. 
So $\hat h_f(f^h(\alpha)) <~\delta$ and $\hat h_f(f^{k}(\alpha)) < 
\delta$. On applying~\Cref{thm:weil-to-call-silverman-inequality} to 
$f^h(\alpha)$ and $f^{k}(\alpha)$, it follows that $h(f^h(\alpha)) < 
\hat h_f(f^h(\alpha)) + I + \eta = 2^h\hat h_f(\alpha) + I + \eta$ 
and $h(f^{k}(\alpha)) < 2^k\hat h_f(\alpha) + I +\eta$. Therefore, 
\begin{equation}\label{eqn:infinite-ineq}
	h(f^{k}(\alpha)) + h(f^h(\alpha))
	< (2^h + 2^k)\hat h_f(\alpha) + 2I + 2\eta.
\end{equation}
After substituting~\eqref{eqn:infinite-ineq} into~\eqref{eqn:polya-ineq} 
we conclude that 
\[
	0 \le 
    (2^h + 2^k) \hat h_f(\alpha) - \log 2 + 2I + 2\eta
\]
from which
\[
    \hat h_f(\alpha)
    \ge \frac{\log 2 - 2I - 2\eta}{2^h + 2^k}
    = \frac{\log2 - 2I}{2(2^h + 2^k)}
    > 0.
\]
The proof follows.\qedhere
\end{proof}

\begin{remark}
Instead of a segment, one could use the \textit{hedgehog} generated 
by the zero and pole of the function and estimate its transfinite 
diameter via Dubinin's theorem. The two approaches produce the same 
bound.
\end{remark}

\subsection{Proof of~\Cref{thm:main-thm,thm:main-thm-unr,thm:main-thm-ram}}
Recall that $K$ is a number field unramified above $2$ and fix a 
prime ideal $\p$ of $K$ above $2$. Let $f(T) = T^2 + c \in O_K[T]$ be 
PCF. The cases $c = -2$ and $c = 0$ have been studied respectively by 
Pottmeyer~\cite[Proposition 5.2]{MR3129749} and Amoroso--Zannier~\cite{MR2651944} 
over abelian extensions. Therefore, we can assume $c \notin \{-2, 0\}$.
After replacing $K$ by its Galois closure, we can assume that 
$K / \QQ$ is a Galois extension. Indeed, since $2$ is unramified in 
$K$, it is also unramified in the Galois closure of $K$. 
This follows because the Galois closure is the compositum of the 
conjugated fields $\sigma (K)$ for all $\sigma \in 
\Gal(\overline \QQ / \QQ)$. As each $\sigma$ maps $O_K$ 
isomorphically into $O_{\sigma K}$, the factorization $2 O_K = \p_1 
\cdots \p_g$ gives $2 O_{\sigma(K)} = \sigma(\p_1) \cdots \sigma(\p_g)$, 
from which we conclude that $2$ is unramified in every $\sigma (K)$, 
and consequently is unramified in their compositum.

Let $\alpha \in K^\mathrm{cyc}$ be wandering and fix a root of unity 
$\zeta$ such that $\alpha \in K(\zeta)$. The proof consists of a 
case-by-case analysis.

\textsc{Case 1: Unramified case.}
Suppose first that $2$ is unramified in $K(\zeta)$. The proof in this 
case follows after applying~\Cref{thm:main-technical-thm} with 
$(\sigma, (h, k)) = (\varphi^{m - n}, (dn, dm))$ where $d = [K \colon \QQ]$, 
and $n < m$ are two integers such that $f^{n - 1}(0) = f^{m - 1}(0)$. 
Note that the hypothesis is fulfilled: since $n < m$, if 
$\varphi^{m - n}(f^{dn}(\alpha)) = f^{dm}(\alpha)$, 
then~\Cref{lemma:unramified-equality-case} implies that $\alpha$ is 
preperiodic, a contradiction. This proves~\Cref{thm:main-thm-unr}.

\textsc{Case 2: Ramified case.}
Suppose now that $2$ is ramified in $K(\zeta)$. Choose $\zeta$ so 
that $\alpha \in K(\zeta) \setminus K(\zeta^2)$. Assume that $\alpha 
\in \overline O_\p$ for some prime ideal $\p$ above $2$. Note that if 
$\alpha$ is an algebraic integer, it automatically satisfies this 
hypothesis. 
Consider $\kappa \in \Gal(K(\zeta) / K(\zeta^2))$ as 
in~\Cref{prop:ramified-easy-case}. If $\kappa(f(\alpha)) \neq 
f(\alpha)$, the proof follows again from an application 
of~\Cref{thm:main-technical-thm} with $\sigma = \kappa$, and $h = k = 1$.

If $\kappa(f(\alpha)) = f(\alpha)$, then $f(\alpha) \in K(\zeta^2)$. 
We distinguish two more subcases. 

\textsc{Subcase 2A.}
If $K(\zeta^2) / K$ is unramified above $2$, then \textsc{Case 1} 
applied to the wandering number $f(\alpha)$ implies that there exists 
$\varepsilon > 0$ such that 
\[
    \hat{h}_f(\alpha)
    = \frac{1}{2} \hat{h}_f(f(\alpha))
    \ge \frac{\varepsilon}{2}.
\]

\textsc{Subcase 2B.}
If $K(\zeta^2) / K$ is ramified above $2$, apply again~\Cref{prop:ramified-easy-case} 
to $f(\alpha)$ with $K(\zeta^2)$ in place of $K(\zeta)$ and obtain 
$\rho \in \Gal(K(\zeta^2) / K(\zeta^4))$ with the stated property. If 
$\rho(f^2(\alpha)) \neq f^2(\alpha)$, then the proof follows 
by~\Cref{thm:main-technical-thm} applied to $f(\alpha)$ with the 
field $K(\zeta^2)$ in place of $K(\zeta)$, $\sigma = \rho$, and $h = k = 1$. 

If $K(\zeta^4) / K$ is unramified at $\p$, then the case 
$\rho (f^2(\alpha)) = f^2(\alpha)$ follows as in \textsc{Subcase 2A.}

If $K(\zeta^4) / K$ is ramified above $\p$ and $c \notin \{-2, 0\}$,
\Cref{prop:second-iterate-degeneration} implies that if there exists 
at least one prime ideal $\p$ of $K$ above $2$ such that $\alpha \in 
\overline{O_\p}$, then $\rho(f^2(\alpha)) \neq f^2(\alpha)$. 
Applying~\Cref{thm:main-technical-thm} to $f(\alpha)$ with $h = k = 1$ 
and~$\sigma = \rho$ gives $\hat h_f(f(\alpha)) \ge \varepsilon$. 
Hence $\hat h_f(\alpha) \ge \varepsilon / 2$. This completes the 
proofs of~\Cref{thm:main-thm,thm:main-thm-ram}.

\defbibheading{mybibheading}{
  \section*{References}
}

\printbibliography[heading = mybibheading]
\end{document}